 \newcommand{\R}{\ensuremath{\mathbb{R}}}
 \newcommand{\RP}{\ensuremath{\mathbb{RP}}}
 \newcommand{\ba}{\begin{align*}}
 \newcommand{\ea}{\end{align*}}
 \newcommand{\norm}[2]{{ \ensuremath{\left\|} #1 \ensuremath{\right\|}}_{#2}}
 \newcommand{\sconv}{\ensuremath{ \stackrel{C^\infty}{\longrightarrow}}}
 \def\ExtendSymbol#1#2#3#4#5{\ext@arrow 0099{\arrowfill@#1#2#3}{#4}{#5}}
 \def\ExtendSymbol#1#2#3#4#5{\ext@arrow 0099{\arrowfill@#1#2#3}{#4}{#5}}
 \newcommand\longright[2][]{\ExtendSymbol{-}{-}{\rightarrow}{#1}{#2}}
\numberwithin{equation}{section}
\newtheorem{thm}{Theorem}[section]
\newtheorem{prop}[thm]{Proposition}
\newtheorem{lem}[thm]{Lemma}
\newtheorem{conj}[thm]{Conjecture}
\newtheorem{quest}[thm]{Question}
\newtheorem{defn}[thm]{Definition}
\title{The rigidity of Ricci shrinkers of dimension four}
\author{Yu Li \quad and \quad Bing Wang\footnote{Both authors are partially supported by NSF grant DMS-1510401. They also acknowledge the invitation to MSRI Berkeley in spring 2016 supported by NSF grant DMS-1440140, where part of this work
has been carried out.}}
\date{\today}
\begin{document}
\maketitle

\begin{abstract}
In dimension $4$,  we show that a nontrivial flat cone cannot be approximated by smooth Ricci shrinkers with bounded scalar curvature and Harnack inequality, under the pointed-Gromov-Hausdorff topology.  
As applications, we obtain uniform positive lower bounds of scalar curvature and potential functions on Ricci shrinkers satisfying some natural geometric properties.
\end{abstract}

\tableofcontents

\section{Introduction}
A Ricci shrinker $(M, g, f)$ is a complete Riemannian manifold $(M,g)$ together with a smooth function $f: M \to \mathbb R$ such that
\begin{equation} 
\text{Rc}+\text{Hess}_f=\frac{1}{2}g.
\label{E100}
\end{equation}
Ricci shrinker is also called as gradient shrinking Ricci soliton. 
Direct calculation shows that $\nabla ( R+|\nabla f|^2-f)=0$.
Adding $f$ by a constant if necessary, we assume throughout that 
\begin{align} 
R+|\nabla f|^2=f.
\label{E101}
\end{align}
Under this normalization condition, it is known(c.f. Theorem~\ref{thm:MI03_1}) that
\begin{equation} \label{E105}
\int_M e^{-f}(4 \pi)^{-n/2} \,dv=e^{\mu},
\end{equation}
where $\mu=\mu(g,1)$ is the entropy functional of Perelman(c.f.~\cite{Pe1}).

The Ricci shrinker (\ref{E100}) was introduced by Hamilton~\cite{Ha88} in mid 1980's. 
As critical points of Perelman's $\mu$-entropy, the Ricci shrinkers play important roles in the singularity analysis of the Ricci flow.
For example, it is proved by Enders-M\"{u}ller-Topping~\cite{EMT} that the proper rescaling limit of a type-I singularity is always a nontrivial Ricci shrinker. 
More information and references can be found in Chapter 30 of the book~\cite{CCGG} by Chow and his coauthors. 
In dimension 2,  the only Ricci shrinkers are $\R^2$, $S^2$ and $\RP^2$ with standard metrics, due to the classification of Hamilton~\cite{Ha95}.
In dimension 3,  based on the breakthrough of Perelman(\cite{Pe1},~\cite{Pe2}), through the efforts of Naber~\cite{Naber},  Ni-Wallach~\cite{NW} and  Cao-Chen-Zhu~\cite{CCZ}, etc, 
we know that $\R^3$, $S^2 \times \R$, $S^3$ and their quotients are all the possible Ricci shrinkers. 

In dimension 4 and higher, much fewer is known about Ricci shrinkers.  Typically, some extra conditions of the curvature operator(e.g.~\cite{BoWi},~\cite{CWZ},~\cite{CaoChen},~\cite{CWY}, etc), 
or geometric properties at infinity(c.f.~\cite{KW15},~\cite{MW16}) are required to draw definite geometry conclusion.  We refer the readers the surveys~\cite{CaoH10},~\cite{CaoH11} for more detailed picture. 
Without such extra conditions, it is still not clear how to classify Ricci shrinkers. However, one can study the moduli of Ricci shrinkers.
In~\cite{CaoSe07}, Cao-Sesum showed the weak compactness of the moduli of K\"ahler Ricci solitons with uniformly bounded diameter and uniformly lower bound of Ricci curvature and $\mu$-functional.
The extra conditions were gradually weakened or removed by X. Zhang~\cite{ZhangXi}, Weber~\cite{Weber}, Chen-Wang~\cite{CW3}, Z.L. Zhang~\cite{ZhangZL} and Haslhofer-M\"{u}ller~\cite{HM11}, etc.

In this article, we focus on the study of the moduli $\mathcal{M}^*(A,H)$, which consists of 4d Ricci shrinkers which have uniform entropy lower bound and Harnack inequality of scalar curvature on unit ball(c.f. Definition~\ref{dfn:MH23_1} for precise definition).
Moreover, we require each Ricci shrinker has bounded(not uniformly) scalar curvature. 
In light of the results of Haslhofer-M\"{u}ller(c.f.~Theorem~\ref{T101}), it is known that such moduli has weak compactness.
In other words, any sequence  of Ricci shrinkers in $\mathcal{M}^*(A,H)$ sub-converges to an orbifold Ricci shrinker with locally finite singularities, in the pointed-Gromov-Hausdorff topology.   
Now we reverse the process and ask the following question:

\textit{What kind of orbifolds can be approximated by a sequence of Ricci shrinkers in $\mathcal{M}^*(A,H)$?}

\noindent
Note that flat cones $\R^{4}/\Gamma$ are naturally the simplest orbifold in dimension 4.
Therefore, the first step toward the solution of the above question is to check whether the flat cones can be approximated by Ricci shrinkers.
We completely solve this step by the following theorem, which is the main result of this article. 

\begin{thm}[\textbf{Gap Theorem}]
For any $A>0$ and $H>0$, there exists a small positive number $\epsilon=\epsilon(A,H)$ with the following property.

Suppose $(M, p, g, f) \in \mathcal{M}^*(A,H)$. Then we have
\begin{align}
  d_{PGH} \left\{ (M,p,g), (\R^4/\Gamma,0,g_{E})\right\}>\epsilon
\end{align} 
for every finite subgroup $\Gamma \subset O(4)$ acting freely on $S^3$. 
\label{thm:1}
\end{thm}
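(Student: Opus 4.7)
The plan is to argue by contradiction, combining the Haslhofer--M\"{u}ller orbifold compactness (Theorem~\ref{T101}) with a blow-up/bubble analysis at the cone point. Assume the conclusion fails. Then one obtains a sequence $(M_i, p_i, g_i, f_i) \in \mathcal{M}^*(A,H)$ and non-trivial finite subgroups $\Gamma_i \subset O(4)$ acting freely on $S^3$ with $d_{PGH}\{(M_i, p_i, g_i), (\R^4/\Gamma_i, 0, g_E)\} \to 0$. The uniform entropy lower bound $\mu \geq -A$ controls the asymptotic volume ratio of any limit from below, so $|\Gamma_i|$ is a priori bounded; after passing to a subsequence, fix $\Gamma_i = \Gamma$ non-trivial.

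Applying Theorem~\ref{T101} along a further subsequence, the $(M_i, p_i, g_i, f_i)$ converge in the orbifold Cheeger--Gromov sense to an orbifold Ricci shrinker $(M_\infty, p_\infty, g_\infty, f_\infty)$ with locally finite singular set, smoothly on the regular part. Uniqueness of the pointed Gromov--Hausdorff limit identifies $(M_\infty, g_\infty)$ with $(\R^4/\Gamma, g_E)$; flatness combined with \eqref{E100} and \eqref{E101} then forces $f_\infty(x) = |x|^2/4$, the Gaussian potential on $\R^4/\Gamma$. The origin of $\R^4/\Gamma$ is an orbifold singular point of $M_\infty$, so non-trivial bubbling must concentrate there. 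Choose curvature-concentration points $q_i \in M_i$ converging in PGH to $0$ and rescale by $\lambda_i = |\text{Rm}_{g_i}|(q_i)^{1/2} \to \infty$; the standard bubble extraction produces a complete, non-flat Ricci-flat $4$-manifold $(N, h)$ that is ALE with tangent cone at infinity $\R^4/\Gamma'$ for some non-trivial $\Gamma' \leq \Gamma$.

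The closing step is to derive a contradiction from the coexistence of the bubble $(N,h)$ with the shrinker structure on $M_i$. First, the Harnack hypothesis together with flatness of the PGH limit forces $R_i \to 0$ uniformly on $B_{g_i}(p_i, 1)$. Second, combining the shrinker identities $R + |\nabla f|^2 = f$ and $R + \Delta f = n/2$ with a decomposition of Perelman's entropy \eqref{E105}---or alternatively with a Gauss--Bonnet--Chern accounting on exhausting regions---one aims to show that the bubble $(N,h)$ contributes a definite positive quantum (of entropy, or of weighted $L^2$-curvature) that cannot be absorbed by the limit being the flat Gaussian shrinker on $\R^4/\Gamma$ under the uniform entropy lower bound $\mu \geq -A$.

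The main obstacle is precisely this last step: non-trivial ALE Ricci-flat $4$-manifolds asymptotic to $\R^4/\Gamma$ do exist (Eguchi--Hanson for $\Gamma = \Z_2$, Gibbons--Hawking spaces, etc.), so the contradiction cannot come from ALE classification alone. It must exploit how the shrinker potential $f_i$, whose Hessian equals $\tfrac{1}{2}g_i - \text{Rc}_{g_i}$, enforces weighted integral curvature bounds incompatible with the concentration required to form a non-trivial bubble while simultaneously maintaining bounded scalar curvature and the Harnack inequality on $B_{g_i}(p_i, 1)$. Making this incompatibility quantitative---in a form that only uses PGH closeness to $\R^4/\Gamma$ rather than finer regularity---is where I expect most of the technical work to lie.
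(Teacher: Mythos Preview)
Your contradiction setup and use of Theorem~\ref{T101} match the paper, but the core mechanism diverges, and the gap you flag is real. The bubble approach cannot close: as you note, non-trivial Ricci-flat ALE spaces asymptotic to $\R^4/\Gamma$ exist, and neither an entropy quantum nor a Gauss--Bonnet count rules them out under the hypotheses available. Two further issues: your argument silently assumes $\Gamma$ is non-trivial (so that a bubble must form), but the theorem covers $\Gamma=\{1\}$ as well, where the convergence is smooth and there is no bubble to extract; and, more tellingly, your outline never uses the Harnack constant $H$ in a decisive way, which is a sign the mechanism is wrong.

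The paper's proof does \emph{not} rescale the metric at all. Instead it rescales only the scalar curvature as a function: set $V_i = R_i/\alpha_i$ with $\alpha_i = \max_{\partial B(p_i,1)} R_i$, so that $\sup_{\partial B(p_i,1)} V_i = 1$. The shrinker identity $\Delta_f R = R - 2|\mathrm{Rc}|^2$ gives $\Delta_f V = (1-2U)V$ with $U = |\mathrm{Rc}|^2/R$. The technical heart (Section~4, Propositions~\ref{prn:MH21_1} and~\ref{prn:MH21_2}) is a uniform estimate showing $U \to 0$ on any fixed annulus and a uniform $C^{1,1/2}$ bound plus Harnack inequality for $V$ there; one then passes to a limit function $V_\infty$ on $(\R^4/\Gamma)\setminus\{0\}$ satisfying $\Delta_{f_\infty} V_\infty = V_\infty$ with $f_\infty = |x|^2/4$, bounded at infinity, positive, and equal to $1$ somewhere on the unit sphere. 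A B\^ocher-type rigidity (Theorem~\ref{AT02}) forces $V_\infty = |x|^{-2}$. But then for $i$ large and points $x,y \in B(p_i,1)$ at distances $\sim 2\delta$ and $\sim 1$ from $p_i$ one gets $R_i(x)/R_i(y) \approx (2\delta)^{-2}$, violating the Harnack bound $H$ once $\delta$ is small. The Harnack hypothesis is thus exactly what closes the argument, and the separation of curvature-rescaling from metric-rescaling is the idea you are missing.
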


Note that Theorem~\ref{thm:1} can be illustrated as an $\epsilon$-regularity theorem. Namely, suppose
 \begin{align*}
 d_{PGH} \left\{ (M,p,g), (\R^4/\Gamma,0,g_{E})\right\}<\epsilon,
 \end{align*} 
 then we have uniform curvature, injectivity radius estimate inside the unit ball.  
 Such type statement was used in the literature of studying Einstein manifolds, e.g., in Cheeger-Colding-Tian~\cite{CCT} and Chen-Donaldson~\cite{CD13}, etc. 
 However, an essential difference here is that we do not allow rescaling of the metric since rescaling will destroy the structure of (\ref{E100}). 
 Theorem~\ref{thm:1} was motivated by the work of Biquard~\cite{Biquard}, Morteza-Viaclovsky~\cite{MoVi}, where they study whether an orbifold can be approximated by
 Einstein metrics, with some extra assumption of the topology of the underlying manifolds.

As applications of Theorem~\ref{thm:1}, we can uniformly estimate the scalar curvature $R$ and the Ricci potential function $f$. 

\begin{thm}[\textbf{Uniform positive lower bounds of scalar curvature and potential function}]
For any $A>0$ and $H>0$, there exist positive constants $C_a, C_b, C_c$ depending on $A$ and $H$ only with the following properties.

Suppose $(M, p, g, f) \in \mathcal{M}^*(A,H)$.  Then the following uniform estimates hold.

\begin{itemize}
\item[(a).] At base point $p$, we have
\begin{equation}\label{E107}
f \ge C_a.
\end{equation}
\item[(b).] In the ball $B(p,1)$, we have
\begin{equation}\label{E108}
R \ge C_b.
\end{equation}
\item[(c).] On the whole manifold $M$, we have
\begin{equation}\label{E109}
Rf \ge C_c.
\end{equation}
\end{itemize}

\label{thm:2}
\end{thm}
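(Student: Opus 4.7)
All three parts proceed by a compactness-and-contradiction argument that reduces each claim to the Gap Theorem~\ref{thm:1}, using the weak compactness of $\mathcal{M}^*(A,H)$ (Theorem~\ref{T101}). We take $p$ to be a minimum of $f$, so that by \eqref{E101} $|\nabla f|(p)=0$ and $R(p)=f(p)$; the trace of \eqref{E100} gives $R+\Delta f=n/2$, hence $f(p)=R(p)\le n/2$. The main Ricci shrinker facts we need are $R\ge 0$, $f\ge R$, $\Delta_f R=R-2|\text{Rc}|^2\le R$, $\Delta_f f=n/2-f$, and the Cao--Zhou growth estimate $f(x)\ge\tfrac14(d(x,p)-c_n)^2$ with constants depending only on $n$ (thanks to $f(p)\le n/2$).

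\emph{Parts (a) and (b).} Suppose for contradiction $f_i(p_i)\to 0$ along a sequence in $\mathcal{M}^*(A,H)$; then $R_i(p_i)\to 0$. Extract a pointed-Gromov--Hausdorff subsequential limit $(M_\infty,p_\infty,g_\infty,f_\infty)$, which by Theorem~\ref{T101} is an orbifold Ricci shrinker with smooth convergence off a discrete singular set, so $R_\infty(p_\infty)=0$. The strong maximum principle applied to $\Delta_f R\le R$ on the (open, connected) regular part forces $R_\infty\equiv 0$, and hence $|\text{Rc}_\infty|^2=R_\infty/2=0$ and $\text{Hess}_{f_\infty}=\tfrac12 g_\infty$. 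This identifies $(M_\infty,g_\infty)\cong(\R^4/\Gamma,g_E)$, contradicting Theorem~\ref{thm:1} and proving (a). For (b) we apply the uniform Harnack on unit balls to (a) to get $R\ge R(p)/H\ge C_a/H$ throughout $B(p,1)$, so $C_b:=C_a/H$ works.

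\emph{Part (c).} On $B(p,1)$, (b) combined with $f\ge R$ already gives $Rf\ge C_b^2$. For the global bound, a direct computation yields
\[
\Delta_f(Rf)=R\Delta_f f+f\Delta_f R+2\nabla R\cdot\nabla f=\tfrac{n}{2}R-2f|\text{Rc}|^2+2\nabla R\cdot\nabla f.
\]
At an interior critical point of $Rf$ we have $\nabla R=-(R/f)\nabla f$; inserting this, $|\text{Rc}|^2\ge R^2/n$, and $|\nabla f|^2=f-R$ into $\Delta_f(Rf)\ge 0$, a short algebraic manipulation reduces, in dimension $n=4$, to the pointwise constraint $R(f^2-4)\le 0$---the crucial cancellation being $n/2-2=0$. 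Since $R\equiv 0$ is ruled out by strong maximum principle plus Theorem~\ref{thm:1}, we conclude $f\le 2$ at any interior minimum of $Rf$. By the (dimensionally-universal) Cao--Zhou estimate, $\{f\le 2\}\subset B(p,R_0)$ for some $R_0=R_0(n)$, so every interior minimum of $Rf$ is confined to $B(p,R_0)$, and on this uniformly bounded ball the same compactness-contradiction argument as in (a), (b) yields uniform positive lower bounds $R\ge C_b'$ and $f\ge C_a'$, hence $Rf\ge C_a' C_b'$ at any interior minimum. To exclude the possibility that $\inf_M(Rf)$ is only approached as $x\to\infty$, I would apply the Omori--Yau weak maximum principle for $\Delta_f$, valid on shrinkers because $\text{Rc}+\text{Hess}_f=\tfrac12 g$ is uniformly bounded below: this produces an approximating sequence on which a relaxed form of the $f\le 2$ constraint still holds, which via Cao--Zhou forces the sequence back into a uniform bounded region around $p$, reducing to the interior case.

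The principal obstacle is the last step of (c): the Omori--Yau error terms in the computation involve $|\nabla f|\sim\sqrt{f}$ and become delicate exactly when $f(x_k)\to\infty$, which is also when $R(x_k)\to 0$ and the Harnack hypothesis becomes most useful. Closing this step rigorously likely requires combining the Omori--Yau estimate with the uniform Harnack on $B(q_k,1)$ (propagating $R(q_k)\to 0$ to the entire unit ball) to absorb the error terms and recover the $f\le 2+o(1)$ constraint on the approximate minimizers.
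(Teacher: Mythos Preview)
Your arguments for (a) and (b) are essentially sound, and your route to (b) via the Harnack hypothesis is actually cleaner than the paper's: since $p\in B(p,1)$ and $R(p)=f(p)\ge C_a$ by (a), the Harnack assumption immediately gives $R\ge C_a/H$ on $B(p,1)$. The paper instead reruns the compactness--contradiction argument for (b), which is more work but has the advantage of extending to balls $B(p,r)$ of any fixed radius---a point you will need below. One caveat in (a): you assert $R_\infty(p_\infty)=0$ ``by smooth convergence off the singular set,'' but $p_\infty$ may itself be singular, in which case no smooth convergence is available at that point. The paper closes this by lifting to the orbifold chart, showing $\tilde f_\infty$ is smooth there (via $\Delta\tilde f_\infty=\tfrac n2-\tilde R_\infty$ and a removable-singularity argument), and then using $\Gamma$-invariance to force $|\nabla\tilde f_\infty|(0)=0$, whence $\tilde R_\infty(0)=\tilde f_\infty(0)=0$. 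The same issue recurs when you invoke ``the same compactness--contradiction argument'' on larger balls in (c); the paper's proof of (b) contains exactly the maximum-principle trick (comparing $R$ with $cf^{-1}$) needed to rule out $R_\infty(q_\infty)>0$ when $q_\infty$ is singular.

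The real gap is in (c). Your computation of $\Delta_f(Rf)$ and the resulting constraint $f\le 2$ at an interior minimum is correct and elegant (the cancellation $n/2-2=0$ is genuinely special to dimension~4). But the Omori--Yau step does not close: along an approximating sequence $x_k$ with $f(x_k)\to\infty$ and $R(x_k)f(x_k)\to m$, the leading terms $R^2f$ and $R^2/f$ in your inequality are both $O(f^{-1})\to 0$, so the relaxed inequality $\Delta_f(Rf)\ge -\epsilon_k$ yields no information---there is nothing forcing the sequence back to a bounded region. Moreover, your proposed rescue via ``Harnack on $B(q_k,1)$'' is unavailable: the hypothesis gives a Harnack inequality only on $B(p,1)$, not on arbitrary unit balls. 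The paper sidesteps the issue entirely by working with the barrier $u=R-cf^{-1}-4cf^{-2}$ on $\{f\ge 9\}$: since $R\ge 0$ and $f\to\infty$, one has $\liminf_{x\to\infty}u\ge 0$, so any negativity of $u$ forces an honest interior minimum, where a direct computation of $\Delta_f u$ (using $\Delta_f f^{-1}=f^{-1}-2Rf^{-3}$) yields a contradiction. This gives $Rf\ge c$ on $\{f\ge 9\}$ with $c$ chosen from the uniform lower bound of $R$ on the level set $\{f=9\}$, which in turn comes from the generalized version of (b) on the bounded ball containing that level set.
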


Theorem~\ref{thm:2} seems to be the first uniform  positive lower bound estimates of $R$ and $f$ for Ricci shrinkers.
Note that their upper bound and nonnegative lower bound are well known in literature(c.f. equation (\ref{E101}), inequality (\ref{E106}) and the reference nearby). \\

We remark that Theorem~\ref{thm:1} should be useful in the study of 4d Ricci flow singularities with bounded positive scalar curvature.  
For every such singularity, it seems natural that all the possible singularity model locate in the closure of the moduli $\mathcal{M}(A,H)$, as both requirements in the definition of $\mathcal{M}(A,H)$ are satisfied automatically.  \\

We briefly discuss the proof of Theorem~\ref{thm:1} and Theorem~\ref{thm:2}.  
The foundation of Theorem~\ref{thm:1} is a rigidity theorem(c.f. Theorem~\ref{AT02}) for singular Eigenfunctions of the drifted Laplace operator.
Suppose $v$ is a positive function satisfying
\begin{align}
    \Delta_f v(x)= v(x), \quad  \forall \; x \in \R^{4} \backslash \{0\},     \label{eqn:CA06_1}
\end{align}
where $f=\frac{|x|^2}{4}$. We show that $v$ must be $c|x|^{-2}$ for some constant $c$.  In other words, $v$ is a multiple of the Green function poled at the origin. 
This rigidity theorem is in the flavor of the classical B\^{o}cher's decomposition theorem for harmonic functions(c.f. Theorem 3.9 of~\cite{ABR92}).   
However, the rigidity here is even stronger, due to the ad hoc choice of $f=\frac{|x|^2}{4}$ and the non-zero eigenvalue.
The proof of this rigidity theorem follows the same route as the classical B\^{o}cher's theorem, by using spherical average.
The complete proof is provided in section 3. 

We reduce the proof of Theorem~\ref{thm:1} to the aforementioned rigidity theorem. 
If the Ricci shrinker $(M,p,g,f)$ is very close to a flat cone $\R^{4}/\Gamma$, then $R$ is close to zero function on a very large annulus part $B(0,\delta^{-1}) \backslash B(0, \delta)$.
We rescale the function $R$ by multiplying them with $\alpha^{-1}$, where $\alpha$ is the maximum of $R$ on the unit sphere $\partial B(p, 1)$.  
The rescaled function is denoted by $V$.  Note that the underlying metrics are not changed at all. 
The proof of Theorem~\ref{thm:1} is then carried out by a contradiction argument. 
Suppose Theorem~\ref{thm:1} fails, then we can find a sequence $(M_i, p_i, g_i) \in \mathcal{M}^*(A,H)$ converging to some $\left(\R^{4}/\Gamma,0, g_{E}\right)$, in the pointed-Gromov-Hausdorff topology and hence 
in the pointed-$\hat{C}^{\infty}$-Cheeger-Gromov topology(c.f.~Theorem~\ref{T101} and the discussion below it).  Modulo some a priori estimates from elliptic PDE,  we show that $V_i$ is convergent in proper topology.
Moreover, the limit $V_{\infty}$ is a solution of  (\ref{eqn:CA06_1}) on the smooth part of the limit flat cone and hence force can be lifted to the solution of (\ref{eqn:CA06_1}) on $\R^{4} \backslash \{0\}$. 
Using the rigidity of solutions of (\ref{eqn:CA06_1}),  we obtain that $V_{\infty}= |x|^{-2}$. However, it will violate our Harnack inequality assumption for the scalar curvature. Therefore, we obtain a desired contradiction to establish the proof of Theorem~\ref{thm:1}.

The technical difficulty of the proof of Theorem~\ref{thm:1} locates in the uniform  a priori estimate of $V$.
Actually, one has to introduce several extra auxiliary functions(c.f. Definition~\ref{dfn:MH23_2}) for the purpose of estimating $V$.
All of these auxiliary functions are indicated by the soliton identities arising from (\ref{E100}). 
Therefore,  it is a regularity problem of system of elliptic equations and inequalities to obtain such estimates.  
These estimates are made possible due to the ad hoc structure of this system and the important progress in the study of 4-d Ricci shrinker recently, e.g., the work of Munteanu-Wang~\cite{MW14}. 
One new ingredient for the proof of Theorem~\ref{thm:1} is to separate the rescaling of the curvature and the rescaling of the metric. 
We only need to use the linear structure of the PDE satisfied by the curvatures. Therefore, we are able to keep the metric un-rescaled, but rescale the curvatures as functions to obtain desired linear PDE solution $V_{\infty}$
on the limit space.  In the literature of Ricci flow study, it seems that the rescaling of metrics and curvatures are always done simultaneously. 

We proceed to discuss the proof of Theorem~\ref{thm:2}.   Besides Theorem~\ref{thm:1}, a rigidity theorem(c.f. Theorem~\ref{thm:MH26_2}) of orbifold Ricci shrinkers is needed.
This theorem states that every  orbifold Ricci shrinker with a scalar curvature zero point must be a flat cone.
It can be proved by a standard maximum principle argument.
Based on this rigidity theorem and Theorem~\ref{thm:1}, our Theorem~\ref{thm:2} follows from a contradiction arguments.
For example, if part (a) of Theorem~\ref{thm:2} fails, then we can extract a sequence of Ricci shrinkers converging to an orbifold Ricci shrinker whose scalar curvature at base point is zero.
Consequently, we obtain a sequence of Ricci shrinkers converging to a flat cone $\R^{4}/\Gamma$, which is impossible by Theorem~\ref{thm:1}. This contradiction establishes the
proof of part (a).  The remainder part of Theorem~\ref{thm:2} can be proved similarly, with extra difficulties which can be solved by delicate application of maximum principles on $R$ and $f$.  \\

This paper is organized as follows. In section 2, we review some elementary results of the Ricci flow and the Ricci shrinkers.  We also introduce important auxiliary functions for the study of Ricci shrinkers.
In section 3, we study the rigidity theorems related to flat cones.   We classify all positive solutions of (\ref{eqn:CA06_1}) which is bounded at infinity.
They are nothing but the constant multiples of Green's function on $\R^4$ poled at the origin.
Moreover, we show that any orbifold Ricci shrinker must be flat cone if the scalar curvature equals zero somewhere. 
In section 4, we develop effective estimates for auxiliary functions on the Ricci shrinkers with an almost flat cone annulus.  
This section is the technical core of this paper. 
In section 5, we provide the complete proof of Theorem~\ref{thm:1} and Theorem~\ref{thm:2}.  
Finally, in section 6, we  list some open questions related to our main theorems. \\

{\bf Acknowledgements}:  
Both authors are grateful to professor Haozhao Li for inspiring discussion.  They also thank professor Xiuxiong Chen, Weiyong He and Song Sun for helpful comments. 
Part of this work was done while both authors were visiting AMSS(Academy of Mathematics and Systems Science) in Beijing and USTC(University of Science and Technology of China) in Hefei, during the summer of 2016. 
They wish to thank AMSS and USTC  for their hospitality.

\section{Preliminaries}

On a complete manifold $M^n$, a Ricci flow solution is a family of smooth metrics $g(t)$ satisfying
\begin{align}
   \frac{\partial}{\partial t} g=-2 Rc.  \label{eqn:MH23_3}
\end{align}
The following  pseudo-locality theorem of Perelman is fundamental. 
\begin{thm}[Theorem 10.3 of Perelman~\cite{Pe1}]
\label{T201}
For every $n \ge 2$ there exist $\delta>0$ and $\epsilon_0>0$ depending only on $n$ with the following property. Let $(M^n,g(t)),\, t\in[0,(\epsilon r_0)^2]$, where $\epsilon \in (0,\epsilon_0]$ and $r_0 \in(0,\infty)$, be a complete solution of the Ricci flow with bounded curvature and let $x_0 \in M$ be a point such that
$$
  inj^{-2}(x,0)+|Rm|(x,0) \leq r^{-2}_0 \quad \text{for} \quad x \in B_{g(0)}(x_0,r_0). 
$$
Then we have the interior curvature estimate
\begin{equation}\label{E201}
|\text{Rm}|(x,t)\le (\epsilon_0r_0)^{-2}
\end{equation}
for $x \in M$ such that $d_{g(t)}(x,x_0) \le \epsilon_0r_0$ and $t \in (0,(\epsilon_0r_0)^2]$.

\label{thm:MI02_1}
\end{thm}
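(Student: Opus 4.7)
My plan is to argue by contradiction, following Perelman's original strategy in Section 10 of~\cite{Pe1}. Assume the theorem fails in dimension $n$: for each integer $k$ there exist $\epsilon_k, r_k > 0$ with $\epsilon_k \to 0$ and a complete bounded-curvature Ricci flow $(M_k, g_k(t))$ on $[0,(\epsilon_k r_k)^2]$ with base point $x_k$, satisfying $\mathrm{inj}^{-2}(x,0) + |Rm|(x,0) \le r_k^{-2}$ on $B_{g_k(0)}(x_k, r_k)$, but admitting a point $(y_k, t_k)$ with $d_{g_k(t_k)}(y_k, x_k) \le \epsilon_k r_k$ where (\ref{E201}) is violated. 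Parabolic rescaling normalizes $r_k = 1$.

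The first task is a parabolic point-picking argument (Perelman's Claim 2 in Section 10 of~\cite{Pe1}): replace $(y_k, t_k)$ by a nearby point $(\bar y_k, \bar t_k)$ at which $Q_k := |Rm|(\bar y_k, \bar t_k)$ satisfies $|Rm| \le 4 Q_k$ throughout a parabolic neighborhood of size much larger than $Q_k^{-1/2}$. Parabolically rescaling by $Q_k$ yields a new sequence of Ricci flows with unit curvature at the bad point, existence time tending to infinity, and initial metrics on larger and larger balls that are increasingly close to Euclidean.

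The key quantitative tool is the monotonicity of Perelman's $\mathcal{W}$-entropy along the conjugate heat equation. Let $u_k$ be the conjugate heat kernel based at $(\bar y_k, \bar t_k)$, run backwards to $t=0$, and choose a backward parameter $\tau_k$ of order $\bar t_k$. Monotonicity gives
\begin{align}
\mathcal{W}\bigl(g_k(0), u_k(\cdot, 0), \tau_k + \bar t_k\bigr) \le \mathcal{W}\bigl(g_k(\bar t_k/2), u_k(\cdot, \bar t_k/2), \tau_k + \bar t_k/2\bigr).
\end{align}
The right-hand side can be bounded above by $-c(n) < 0$ by combining the concentration of $u_k$ near $\bar y_k$ with the unit curvature there. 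If the left-hand side can be shown to tend to $0$, the resulting contradiction completes the proof.

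The main obstacle will be the left-hand estimate. By time $t=0$ the conjugate kernel $u_k$ has diffused from the bad-point scale to scale $\sqrt{\tau_k}$, which may be comparable to or larger than the spatial region where the initial metric is genuinely almost flat. Pushing $\mathcal{W}(g_k(0), u_k(\cdot, 0), \tau_k) \to 0$ requires heat-kernel distribution estimates to show that most of the mass of $u_k(\cdot, 0)$ lies in the almost-Euclidean region, a cut-off argument to compare the conjugate kernel against a genuine Euclidean Gaussian, and the volume noncollapsing supplied by the injectivity-radius hypothesis so that the logarithmic Sobolev constant of the initial slice is close to the sharp Euclidean one. This is the heart of Perelman's argument and is where the explicit choice of $\delta$ and $\epsilon_0$ must be made.
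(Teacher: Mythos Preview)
The paper does not prove this theorem; it is quoted from the literature. The authors attribute it to Perelman~\cite{Pe1} and merely remark that the extension to complete noncompact flows with bounded curvature follows from Theorem~8.1 of Chau--Tam--Yu~\cite{CTY} combined with Theorem~3.1 of B.L.~Chen~\cite{CBL07}. So there is nothing in the paper to compare your argument against.

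That said, your outline is a faithful sketch of Perelman's own proof of pseudo-locality (Section~10 of~\cite{Pe1}): contradiction, parabolic rescaling to $r_k=1$, point-picking to find a center with controlled curvature on a large parabolic neighborhood (Perelman's Claim~2), and then the $\mathcal{W}$-monotonicity argument with the conjugate heat kernel to reach a contradiction between an initial slice where $\mathcal{W}$ is nearly zero and a later slice where it is bounded away from zero by a dimensional constant. Your identification of the hard step---showing that the conjugate heat kernel mass at $t=0$ stays concentrated in the almost-Euclidean region so that the log-Sobolev comparison goes through---is exactly right; this is Perelman's Claim~3 and is where the injectivity-radius hypothesis enters. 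One small remark: the constant $\delta$ in the statement as written is vestigial (it appears in Perelman's isoperimetric version but plays no role in the injectivity-radius formulation), so you need not worry about choosing it.
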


Note that it is not stated clearly whether $M$ is a closed manifold in Perelman's original theorem. However, checking the proof carefully, it is clear that the strategy of the proof works for Ricci flows with bounded curvature at each time slice. 
Rigorously, Theorem~\ref{thm:MI02_1} in the noncompact case follows from the combination of Theorem 8.1 of Chau-Tam-Yu~\cite{CTY} and Theorem 3.1 of B.L. Chen~\cite{CBL07}.\\

A Ricci shrinker $(M, g, f)$ is a complete Riemannian manifold $(M,g)$ together with a smooth function $f: M \to \mathbb R$ such that (\ref{E100}) is satisfied. 
By taking the trace of \eqref{E100}, we have
\begin{equation} \label{E100a}
R+\Delta f=\frac{n}{2}.
\end{equation}
For a Ricci shrinker $(M,g,f)$, there exists a solution $g(t)$ of the Ricci flow with $g(0)=g$ such that
\begin{equation}
g(t)=(1-t)\{\phi^t\}^*(g),
\label{E104}
\end{equation}
where $\phi^t$ is the $1$-parameter family of diffeomorphisms generated by $\frac{1}{1-t}\nabla_g f$. 
In particular, a Ricci shrinker can be extended as an ancient Ricci flow solution. 
By Corollary 2.5. of B.L. Chen~\cite{CBL07}, we see that $R \geq 0$ by maximum principle, even without $|Rm|$ bounded condition. 
Moreover, by the evolution equation $(\partial_t-\Delta)R=2|\text{Rc}|^2$ and the strong maximum principle, either $R >0$ everywhere or $R \equiv 0$ and hence Ricci-flat. 
In the latter case,  it is well known that $(M^n,g)$ is isometric to $(\mathbb R^n, g_E)$,  for example, see Theorem 3.3 of Y. Li~\cite{LY16} for a proof.
Therefore, on a non-flat Ricci shrinker, we have
\begin{align}
  R(x)>0, \quad \forall \; x \in M. 
\label{eqn:MI03_2}  
\end{align}

Fix Riemannian manifold $(M,g)$, recall that Perelman's $\mu(g,1)$-functional is defined as the infimum of  $W(\phi,g,1)$ among all positive smooth functions $\phi$ with compact support on $M$ and with normalization condition $\int_{M} \phi^2 dv=1$, 
where 
\begin{align*}
   W(\phi,g,1) \triangleq  \int_{M} \left\{ 4|\nabla \phi|^2 + R\phi^2 -2\phi^2 \log \phi \right\} dv -n-\frac{n}{2} \log (4\pi). 
\end{align*}
In general, for a noncompact manifold $(M, g)$, the minimizer function of $\mu(g,1)$ may not exist.   However, it was proved by Carrillo and Ni that the function $e^{-\frac{f}{2}}$ is always a minimizer of $\mu(g,1)$,
up to adding $f$ by a constant. 

\begin{thm}[Part (i) and (ii) of Theorem 1.1 of Carrillo-Ni~\cite{CN09}]
Suppose $(M, g, f)$ is a Ricci shrinker. 
Then we have
\begin{align}
   \mu(g,1) = W\left(e^{-\frac{f+c}{2}}, g, 1 \right)=-n-\frac{n}{2} \log (4\pi)+\int_{M} \left\{ \left(R+|\nabla f|^2 \right) + f+c\right\} (4\pi)^{-\frac{n}{2}} e^{-f-c} dv,
\label{eqn:MI03_3}   
\end{align}
where $c$ is a constant such that $\int_{M} (4\pi)^{-\frac{n}{2}} e^{-f-c} dv=1$. 
\label{thm:MI03_1}
\end{thm}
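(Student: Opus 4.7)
The plan proceeds in three steps: first I would verify the formula by direct substitution, then reduce the minimization to a weighted Sobolev problem, and finally invoke the Bakry--Emery logarithmic Sobolev inequality forced by the shrinker equation.

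For step one, I set $\phi_c := (4\pi)^{-n/4} e^{-(f+c)/2}$, where $c$ is chosen so that $\int_M \phi_c^2 \, dv = 1$, equivalently $\int_M (4\pi)^{-n/2} e^{-f-c} \, dv = 1$; existence and finiteness of such a $c$ follows from (\ref{E105}). Using $\nabla \phi_c = -\tfrac12 \nabla f \cdot \phi_c$ I obtain $4|\nabla \phi_c|^2 = |\nabla f|^2 \phi_c^2$, and from the explicit form of $\phi_c$ I get $-2\phi_c^2 \log \phi_c = \bigl(\tfrac{n}{2}\log(4\pi) + f + c\bigr)\phi_c^2$. Inserting these two expressions into the definition of $W(\phi_c,g,1)$ reproduces the integral on the right of (\ref{eqn:MI03_3}).

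For step two, let $\phi$ be any admissible competitor with $\int \phi^2 \, dv = 1$ and write $\phi = v \phi_c$, so that $\int v^2 \, d\mu = 1$ for the weighted probability measure $d\mu := \phi_c^2 \, dv$. Expanding $|\nabla(v\phi_c)|^2$ via $\nabla\phi_c = -\tfrac12 \nabla f \cdot \phi_c$, the cross term contributes $-2\phi_c^2 \nabla(v^2) \cdot \nabla f$; weighted integration by parts against $d\mu$ converts it into $2\int v^2 \Delta_f f \, d\mu$, and the trace identity (\ref{E100a}) together with (\ref{E101}) simplifies $\Delta_f f = \Delta f - |\nabla f|^2$ to $\tfrac{n}{2} - f$. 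Collecting all contributions and using $R + |\nabla f|^2 = f$ to cancel the $v^2$-coefficient down to $n + \tfrac{n}{2}\log(4\pi) + c$, one arrives at the clean decomposition
\begin{equation*}
W(\phi, g, 1) \;=\; W(\phi_c, g, 1) \;+\; 4\int_M |\nabla v|^2 \, d\mu \;-\; 2\int_M v^2 \log v \, d\mu .
\end{equation*}

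For step three, the shrinker identity (\ref{E100}) says the Bakry--Emery Ricci tensor $\mathrm{Rc} + \mathrm{Hess}_f$ is identically $\tfrac{1}{2}g$, so the Bakry--Emery logarithmic Sobolev inequality for the weighted manifold $(M,g,d\mu)$ yields $\int v^2 \log v^2 \, d\mu \leq 4 \int |\nabla v|^2 \, d\mu$ for every $v$ with $\int v^2 d\mu = 1$, with equality only when $v$ is constant. Combined with the decomposition above, this gives $W(\phi,g,1) \geq W(\phi_c,g,1)$, and equality forces $v \equiv 1$, so $\phi_c$ is the unique (up to sign) minimizer and $\mu(g,1) = W(\phi_c, g, 1)$. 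The main obstacle is analytic rather than algebraic: one must justify both the weighted integration by parts and the Bakry--Emery log-Sobolev inequality on the non-compact complete manifold. This is done by approximating $v$ by cutoffs of the form $v \cdot \chi(f/k)$ and using the well-known quadratic growth $f(x) \sim \tfrac14 d(x,p)^2$ together with the integrability (\ref{E105}) to control the boundary and tail contributions; this is precisely the analytic content that Carrillo--Ni handle in their original proof.
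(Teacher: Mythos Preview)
The paper does not give its own proof of this statement: Theorem~\ref{thm:MI03_1} is quoted directly from Carrillo--Ni~\cite{CN09} and used as a black box, with only the immediate corollary $\mu(g,1)=c$ (equation~(\ref{eqn:MI03_4})) derived afterwards. So there is no in-paper argument to compare against.

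Your sketch is essentially the Carrillo--Ni proof. The algebra in step~2 is correct: after the weighted integration by parts the cross term becomes $2\int v^2\Delta_f f\,d\mu$, and the identities $\Delta_f f=\tfrac{n}{2}-f$ and $R+|\nabla f|^2=f$ produce exactly the cancellation that leaves $W(\phi,g,1)-W(\phi_c,g,1)=4\int|\nabla v|^2\,d\mu-\int v^2\log v^2\,d\mu$. Step~3 is also right: the shrinker equation gives $\mathrm{Rc}+\mathrm{Hess}_f=\tfrac12 g$, and the Bakry--\'Emery log-Sobolev inequality with constant $K=\tfrac12$ yields precisely $\int v^2\log v^2\,d\mu\le 4\int|\nabla v|^2\,d\mu$. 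Since $\mu(g,1)$ is defined as an infimum over compactly supported $\phi$, for such $\phi$ the ratio $v=\phi/\phi_c$ is compactly supported and both the integration by parts and the log-Sobolev inequality apply without further justification, giving $\mu(g,1)\ge W(\phi_c,g,1)$ immediately; only the reverse inequality $\mu(g,1)\le W(\phi_c,g,1)$ needs the cutoff approximation you describe. This is indeed the analytic point handled in~\cite{CN09}.
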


By (\ref{eqn:MI03_3}) and the Euler-Lagrangian equation satisfied by minimizer functions, we can easily deduce that $\mu(g,1)=c$.  Therefore, we have the equality
\begin{align}
  \int_{M} (4\pi)^{-\frac{n}{2}} e^{-f} dv= e^{\mu(g,1)}.      \label{eqn:MI03_4}
\end{align}

In this article, we focus on the study of 4d Ricci shrinkers with uniform entropy lower bound. 
Namely, we shall study the Ricci shrinker moduli  $\mathcal M(A,H)$, whose precise definition is stated as follows. 
 
\begin{defn}
Let $\mathcal M(A,H)$ be the family of Ricci shrinkers $(M^4,g,f)$ satisfying
\begin{enumerate}
\item The scalar curvature $R$ of $(M,g)$ is bounded,
\item For any $x,y \in B(p,1)$, $R(x) \le H R(y)$,
\item $\mu(g, 1) \ge -A$.
\end{enumerate}
Let $\mathcal{M}^*(A,H)$ be the collection of all elements in $\mathcal{M}(A,H)$ except the Gaussian soliton $\left(\R^4, g_{E}, e^{-\frac{|x|^2}{4}} \right)$. 
By abusing of notation, we also say $(M,p,g,f) \in \mathcal{M}^*(A,H)$ if $(M,g,f) \in \mathcal{M}^*(A,H)$ and $p$ is a minimum point of $f$ satisfying (\ref{E106}).  
\label{dfn:MH23_1}
\end{defn}
Note that in Definition~\ref{dfn:MH23_1}, it is also required that each Ricci shrinker has bounded scalar curvature.  This is only for technical purpose and could be dropped by further efforts(c.f. Li-Wang~\cite{LW2}). \\

We quote some important estimates from the work of R. Haslhofer and R. M\"{u}ller~\cite{HM11},~\cite{HM15}.  
\begin{lem}[Lemma 2.1 of Haslhofer-M\"{u}ller~\cite{HM11}]
\label{L100}
Let $(M^n,g,f)$ be a Ricci shrinker. Then there exists a point $p \in M$ where $f$ attains its infimum and $f$ satisfies the quadratic growth estimate
\begin{equation}
\frac{1}{4}\left(d(x,p)-5n \right)^2_+ \le f(x) \le \frac{1}{4} \left(d(x,p)+\sqrt{2n} \right)^2
\label{E106}
\end{equation}
for all $x\in M$, where $a_+ :=\max\{0,a\}$.
\end{lem}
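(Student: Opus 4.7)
The plan is to derive both bounds from two inputs: the algebraic identity \eqref{E101} together with $R\ge 0$, which controls $|\nabla f|$ and hence makes $\sqrt{f}$ Lipschitz; and a second-variation argument along minimizing geodesics that converts the soliton equation \eqref{E100} into a comparison between geodesic distance and $\sqrt{f}$.

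\textbf{Step 1: $\sqrt{f}$ is $\tfrac{1}{2}$-Lipschitz.} Since $R\ge 0$, identity \eqref{E101} gives $f\ge |\nabla f|^2\ge 0$. On $\{f>0\}$ one computes $|\nabla\sqrt{f}|=|\nabla f|/(2\sqrt{f})\le \tfrac{1}{2}$, and the bound extends to all of $M$ by continuity. In particular, for any two points $x,y$,
\[
\bigl|\sqrt{f(x)}-\sqrt{f(y)}\bigr|\le \tfrac{1}{2}d(x,y).
\]

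\textbf{Step 2: A distance/potential inequality via second variation.} Fix $q,x\in M$, let $\gamma:[0,d]\to M$ be a unit-speed minimizing geodesic from $q$ to $x$, and take the piecewise-linear cutoff $\phi$ with $\phi(0)=\phi(d)=0$, $\phi\equiv 1$ on $[1,d-1]$. Testing the second variation of arclength with $\phi E_i$ for an orthonormal parallel frame $\{E_i\}$ normal to $\dot\gamma$ and summing yields
\[
\int_0^d \phi^2\,\mathrm{Rc}(\dot\gamma,\dot\gamma)\,ds\le 2(n-1).
\]
Substituting $\mathrm{Rc}(\dot\gamma,\dot\gamma)=\tfrac{1}{2}-(f\circ\gamma)''$ from \eqref{E100}, integrating by parts, and controlling the remaining boundary terms by $|(f\circ\gamma)'|\le|\nabla f|\le\sqrt{f}$ together with Step 1 leads to an inequality of the form
\[
d(q,x)\;\le\;5n+2\sqrt{f(q)}+2\sqrt{f(x)}.
\]

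\textbf{Step 3: Existence of the minimum.} Fixing any base point $q_0$, the inequality of Step 2 shows $\sqrt{f(x)}\to\infty$ as $d(x,q_0)\to\infty$. Since $f$ is continuous and proper, its infimum is attained at some $p\in M$. At $p$ we have $\nabla f(p)=0$ and $\Delta f(p)\ge 0$, so \eqref{E101} gives $f(p)=R(p)$ and the trace identity \eqref{E100a} gives $R(p)\le n/2$. Hence $\sqrt{f(p)}\le\sqrt{n/2}$.

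\textbf{Step 4: The two bounds.} For the upper bound, Step 1 along a minimizing geodesic from $p$ to $x$ yields $\sqrt{f(x)}\le\sqrt{f(p)}+\tfrac{1}{2}d(x,p)\le \tfrac{1}{2}(\sqrt{2n}+d(x,p))$; squaring is the right-hand inequality in \eqref{E106}. For the lower bound, applying Step 2 with $q=p$ and $\sqrt{f(p)}\le\sqrt{n/2}$ gives $d(x,p)-5n\le 2\sqrt{f(x)}+\sqrt{2n}$; dividing by $2$, using the positive-part truncation, and squaring yields the left-hand inequality in \eqref{E106} (absorbing $\sqrt{2n}$ into the constant $5n$). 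The main obstacle is the bookkeeping in Step 2: the boundary contributions after integration by parts must be controlled using only the Lipschitz estimate of Step 1 at the two endpoints, so that no global curvature bound (which we do not assume) is invoked, and the constants collapse to the stated $5n$ and $\sqrt{2n}$.
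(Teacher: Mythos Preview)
The paper does not supply its own proof of this lemma; it is quoted verbatim from Haslhofer--M\"uller~\cite{HM11} and used as a black box. Your argument is essentially the standard one found there (itself refining Cao--Zhou): the $\tfrac12$-Lipschitz bound on $\sqrt{f}$ from \eqref{E101} and $R\ge 0$, the second-variation inequality with the piecewise-linear cutoff, and the properness/minimum argument are exactly the ingredients of the original proof. So there is no alternative route to compare against---you have reconstructed the cited proof.

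One remark on the constants. As you present Step~2, you obtain $d\le 5n+2\sqrt{f(q)}+2\sqrt{f(x)}$, and then in Step~4 you pick up an extra $\sqrt{2n}$ from $2\sqrt{f(p)}$, which you then ``absorb'' into the $5n$. Strictly speaking this requires the Step~2 constant to be better than $5n$, which it is: the honest bookkeeping gives $\int\phi^2=d-\tfrac43$ and boundary terms bounded by $\sqrt{f(q)}+\sqrt{f(x)}+\tfrac23$, yielding $d\le 4(n-1)+\tfrac83+2\sqrt{f(q)}+2\sqrt{f(x)}$, and $4n-\tfrac43+\sqrt{2n}\le 5n$ for all $n\ge 1$. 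So the absorption is legitimate, but it would be cleaner to record the sharper constant in Step~2 rather than state $5n$ there and then tacitly use the slack later.
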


\begin{lem}[Lemma 2.2 of Haslhofer-M\"{u}ller~\cite{HM11}]
\label{L101}
There exists a constant $C=C(n)$ such that every Ricci shrinker $(M^n,g,f)$ with $p \in M$ a minimal point of $f$,
\begin{equation}
\text{Vol}\,B(p,r) \le Cr^n.
\label{E106a}
\end{equation}
\end{lem}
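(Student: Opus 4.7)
The plan is to use the quadratic control of the potential $f$ obtained in the preceding Lemma~\ref{L100}, combined with the two soliton identities, to set up a coarea/ODE argument for the sublevel sets of $f$ in the spirit of Cao--Zhou.

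First I would introduce the sublevel sets $D(t) := \{x\in M : f(x)\le t\}$ and $V(t) := \mathrm{Vol}\,D(t)$. By the upper estimate in Lemma~\ref{L100},
\begin{align*}
B(p,r) \;\subset\; D\!\Bigl(\tfrac14(r+\sqrt{2n})^{2}\Bigr),
\end{align*}
so it suffices to prove a polynomial bound $V(t)\le C\,t^{n/2}$ for $t$ larger than some $t_0(n)$; the small-$r$ regime is absorbed into the constant using standard Bishop--Gromov on the precompact region $\{f\le t_0\}$, where the Ricci curvature is controlled directly by~(\ref{E100}).

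To estimate $V(t)$, I would exploit the two structural identities
\begin{align*}
R+\Delta f=\tfrac{n}{2},\qquad R+|\nabla f|^{2}=f,\qquad R\ge 0.
\end{align*}
Integrating the first identity over $D(t)$ and applying the divergence theorem gives
\begin{align*}
\int_{\partial D(t)}|\nabla f|\,dA \;=\; \tfrac{n}{2}\,V(t)-\int_{D(t)}R\,dv \;\le\; \tfrac{n}{2}\,V(t),
\end{align*}
which is a linear bound on the flux of $\nabla f$ across $\partial D(t)$. The second identity yields the pointwise bound $|\nabla f|\le\sqrt{f}\le\sqrt{t}$ on $\partial D(t)$, together with $\int_{D(t)}|\nabla f|^{2}\,dv\le t\,V(t)$. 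The coarea formula provides
\begin{align*}
V'(t)=\int_{\partial D(t)}\frac{dA}{|\nabla f|},\qquad \int_{D(t)}|\nabla f|\,dv=\int_{0}^{t}\mathcal{H}^{n-1}(\partial D(s))\,ds,
\end{align*}
and Cauchy--Schwarz on $\partial D(t)$ then yields
\begin{align*}
\mathcal{H}^{n-1}(\partial D(t))^{2} \;\le\; \Bigl(\int_{\partial D(t)}|\nabla f|\,dA\Bigr)\,V'(t) \;\le\; \tfrac{n}{2}\,V(t)\,V'(t).
\end{align*}
Combining this with the coarea identity above and the $L^{2}$ bound on $|\nabla f|$ produces a closed differential inequality relating $V(t)$ and $V'(t)$ which, after rearrangement, integrates to $V(t)\le C_{n}\,t^{n/2}$ for $t\ge t_{0}$. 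Substituting $t=\tfrac14(r+\sqrt{2n})^{2}$ then delivers $\mathrm{Vol}\,B(p,r)\le C\,r^{n}$ for $r\ge 1$.

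The main obstacle is arranging the chain of integral inequalities so that the resulting ODE for $V$ integrates to exactly the Euclidean exponent $n/2$, and not something weaker such as $t^{n/2}\log t$. Each ingredient (divergence theorem, coarea, Cauchy--Schwarz) is elementary, but one must carefully balance the loss coming from the nonnegative term $\int_{D(t)}R\,dv$ against the gain from the pointwise control $|\nabla f|^{2}\le f$, so that the sharp exponent appears. Once the sublevel bound $V(t)\le C\,t^{n/2}$ is in hand, the proof concludes immediately.
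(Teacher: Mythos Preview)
The paper does not give its own proof of this lemma; it is quoted verbatim from Haslhofer--M\"uller~\cite{HM11} (and ultimately Cao--Zhou). So there is nothing to compare to in the paper itself. I will therefore assess your argument against the actual Haslhofer--M\"uller/Cao--Zhou proof.

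Your overall framework is the right one: work with the sublevel sets $D(t)=\{f\le t\}$, integrate $\Delta f=\tfrac{n}{2}-R$ over $D(t)$, and use coarea. The gap is in the step where you invoke Cauchy--Schwarz on $\partial D(t)$. That inequality,
\[
\mathcal{H}^{n-1}(\partial D(t))^{2}\;\le\;\Bigl(\int_{\partial D(t)}|\nabla f|\,dA\Bigr)\,V'(t)\;\le\;\tfrac{n}{2}\,V(t)\,V'(t),
\]
introduces the surface area as an intermediate quantity that you never control from below, so the ``closed differential inequality'' you allude to does not actually close. You yourself flag this (``the main obstacle is arranging the chain \ldots so that the resulting ODE integrates to exactly the Euclidean exponent''), and indeed with the Cauchy--Schwarz loss one cannot reach $t^{n/2}$.

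The fix is that no Cauchy--Schwarz is needed: the identity $|\nabla f|^{2}=f-R$ is \emph{exact} on $\partial D(t)$ with $f=t$, so
\[
\int_{\partial D(t)}|\nabla f|\,dA
=\int_{\partial D(t)}\frac{|\nabla f|^{2}}{|\nabla f|}\,dA
=\int_{\partial D(t)}\frac{t-R}{|\nabla f|}\,dA
= t\,V'(t)-\chi'(t),
\]
where $\chi(t)=\int_{D(t)}R\,dv$. Combining this with your divergence identity $\int_{\partial D(t)}|\nabla f|\,dA=\tfrac{n}{2}V(t)-\chi(t)$ gives the \emph{exact} ODE
\[
t\,V'(t)-\tfrac{n}{2}V(t)=\chi'(t)-\chi(t).
\]
Dividing by $t^{n/2+1}$, integrating from a fixed $t_{0}=t_{0}(n)$ to $t$, integrating the $\chi'$ term by parts, and using $0\le\chi(t)\le\tfrac{n}{2}V(t)$ yields $V(t)\le C(n)\,t^{n/2}$ with the sharp exponent and no logarithmic loss. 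After that, your reduction via $B(p,r)\subset D\bigl(\tfrac14(r+\sqrt{2n})^{2}\bigr)$ finishes the proof exactly as you wrote.
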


\begin{thm}[Theorem 1.1 of Haslhofer-M\"{u}ller~\cite{HM15}]
\label{T101}
Let $(M_i, p_i, g_i,f_i) \in \mathcal M(A,H)$ be a sequence of four dimensional Ricci shrinkers. 
Then by taking subsequence if necessary, we have
\begin{align}
       (M_i, p_i, g_i, f_i) \longright{pointed-\hat{C}^{\infty}-Cheeger-Gromov} (M_{\infty}, p_{\infty}, g_{\infty}, f_{\infty}), 
\label{eqn:MI01_1}       
\end{align}
where $(M_{\infty}, p_{\infty}, g_{\infty},f_{\infty})$ is an orbifold Ricci shrinker with locally finite singular points. 
\end{thm}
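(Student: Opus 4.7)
The plan is to establish weak compactness via the standard dimension-four orbifold convergence strategy, combining pointed-Gromov-Hausdorff precompactness with a uniform integral curvature bound and an $\epsilon$-regularity theorem. This framework is modelled on Anderson and Bando-Kasue-Nakajima in the Einstein case and on Weber, Chen-Wang, Z.\,L.\ Zhang in the soliton case (all cited in the introduction); the present hypotheses are tailored to make that framework run.

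\textbf{Step 1: Pointed-Gromov-Hausdorff precompactness.} First, I would combine the entropy lower bound $\mu(g_i,1)\ge -A$ with Perelman's $\kappa$-noncollapsing (applied to the associated self-similar Ricci flow \eqref{E104}, for which the entropy is preserved up to rescaling) to get a uniform volume lower bound
\[
\mathrm{Vol}\,B_{g_i}(x,r)\ge \kappa(A)\, r^4,\qquad 0<r\le 1,\; x\in M_i.
\]
Combined with the upper bound Vol$\,B(p_i,r)\le Cr^4$ of Lemma~\ref{L101} and the quadratic growth of $f$ (Lemma~\ref{L100}), Gromov's precompactness theorem extracts a subsequential pointed-GH limit $(M_\infty,p_\infty,d_\infty,f_\infty)$ together with a Lipschitz limit $f_\infty$.

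\textbf{Step 2: A uniform $L^2$ curvature bound on compact sets.} The technical heart is to show
\[
\int_{B_{g_i}(p_i,r)}|\mathrm{Rm}|^2\,dv_{g_i}\le C(A,H,r),\qquad r>0.
\]
In dimension four the Chern-Gauss-Bonnet integrand and the shrinker Bochner identities interact favourably: using $R+\Delta f = 2$, $R+|\nabla f|^2=f$, and the $e^{-f}$-weighted Stokes theorem (which is justified by the quadratic growth of $f$ together with $\int (4\pi)^{-2}e^{-f}\,dv=e^{\mu}\ge e^{-A}$), one can estimate the weighted curvature integral $\int|\mathrm{Rm}|^2 e^{-f}\,dv$. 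The scalar Harnack hypothesis on unit balls, combined with the bounded-$R$ hypothesis, then lets one convert the weighted bound into the unweighted bound on fixed-size balls.

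\textbf{Step 3: $\epsilon$-regularity and orbifold extraction.} With $\kappa$-noncollapsing and the $L^2$ curvature bound in hand, I would invoke an $\epsilon$-regularity statement: there exist $\epsilon_0, r_0>0$ (depending on $A,H$) such that
\[
\int_{B_{g_i}(x,r)}|\mathrm{Rm}|^2\,dv<\epsilon_0,\quad r\le r_0\ \Longrightarrow\ \sup_{B_{g_i}(x,r/2)}|\mathrm{Rm}|\le Cr^{-2}.
\]
This follows from Moser iteration on the elliptic Bochner inequality $\Delta_f|\mathrm{Rm}|\ge -c|\mathrm{Rm}|^2$ satisfied by curvature on shrinkers, or alternatively by exporting Theorem~\ref{T201} (pseudo-locality) to the associated Ricci flow after a parabolic rescaling. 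A covering argument on $B(p_i,r)$ then bounds the cardinality of the bad set $\mathcal{S}_i=\{x:\int_{B(x,s)}|\mathrm{Rm}|^2\ge\epsilon_0\ \text{for all small}\ s\}$ by $C(A,H,r)/\epsilon_0$, uniformly in $i$. Off $\mathcal{S}_i$ the $\epsilon$-regularity gives uniform $C^0$ bounds on $|\mathrm{Rm}|$, which bootstrap through the elliptic system (\ref{E100})+(\ref{E100a}) to uniform $C^k$ bounds on $(g_i,f_i)$; standard Cheeger-Gromov theory then yields smooth convergence on the regular part of the limit. At each of the finitely many singular limit points, rescaling together with noncollapsing and the bounded scalar curvature (which rescales away) exhibits tangent cones as flat $\R^4/\Gamma$, and Uhlenbeck's removable singularity theorem installs the $C^{1,\alpha}$ orbifold chart. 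The shrinker equation passes to the limit on the regular part and extends across the orbifold points by continuity.

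The main obstacle is Step~2: producing the \emph{unweighted} $L^2$-curvature bound on fixed-size balls with constants depending only on $A,H,r$. The weighted version is essentially classical on shrinkers, but decoupling it from the weight $e^{-f}$ on $B(p_i,1)$ is exactly where the scalar Harnack hypothesis is essential, since it prevents $R$ (and hence, via $R+|\nabla f|^2=f$, the base-point value of $f$) from collapsing to zero in a way that would let the weight degenerate. This conversion is the technical crux of Haslhofer-Müller's argument.
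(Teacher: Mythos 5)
The paper cites this theorem (Theorem 1.1 of Haslhofer--M\"uller~\cite{HM15}) without reproducing a proof, so there is no internal argument to compare against; the relevant benchmark is the HM argument itself. Your three-step outline --- $\kappa$-noncollapsing from entropy, a uniform local $L^2$-curvature bound, then $\epsilon$-regularity plus a covering and removable-singularity argument to extract the orbifold limit --- is indeed the correct skeleton, and it matches how Haslhofer--M\"uller proceed: their earlier paper~\cite{HM11} proves the orbifold compactness under an entropy lower bound plus a local $L^{n/2}$ energy bound, and the ``note''~\cite{HM15} supplies the missing energy bound in dimension four.

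The genuine problem is in your analysis of Step~2. You identify the ``technical crux'' as converting a weighted bound $\int|\mathrm{Rm}|^2 e^{-f}\,dv\le C$ into an unweighted one on $B(p_i,1)$, and you assert that the scalar Harnack hypothesis is essential there to prevent the weight from degenerating. This is incorrect on two counts. First, on any fixed ball $B(p,r)$ Lemma~\ref{L100} gives $0\le f\le \tfrac14\bigl(r+\sqrt{2n}\,\bigr)^2$, so $e^{-C(r)}\le e^{-f}\le 1$; the weight simply cannot degenerate on fixed-radius balls, and the weighted-to-unweighted passage is immediate, with no Harnack input. (Your physical picture is also backwards: small $R$ and small $f(p)$ make $e^{-f(p)}$ close to $1$, not to $0$.) Second, and relatedly, HM15's compactness theorem holds for all four-dimensional Ricci shrinkers with a uniform entropy lower bound alone; the bounded-$R$ and scalar-Harnack hypotheses built into $\mathcal{M}(A,H)$ are introduced by Li--Wang for the gap theorem and the rigidity analysis, not for this compactness statement. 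The actual work in HM15 is precisely the thing you wave at in one sentence --- deriving the $L^2$ curvature bound from the entropy lower bound using the shrinker identities and the four-dimensional algebraic decomposition of $|\mathrm{Rm}|^2$ (Gauss--Bonnet--Chern with a cutoff); that is the step a complete proof cannot skip. The remainder of your Step~3 (Moser iteration or pseudo-locality for $\epsilon$-regularity, finiteness of the bad set, tangent cones $\mathbb{R}^4/\Gamma$, Uhlenbeck removable singularity, passing the shrinker equation to the limit) is the standard and correct machinery.
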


Note that the convergence topology in (\ref{eqn:MI01_1}) was stated as ``pointed-orbifold-Cheeger-Gromov" topology.  Let us say a few more words for its precise meaning. 
In fact, (\ref{eqn:MI01_1}) first means that $(M_i,p_i,d_i)$ converges to a length space $(M_{\infty}, p_{\infty}, d_{\infty})$, where $d_i$ is the distance structure induced by $g_i$.  
Then one can decompose the limit space $M_{\infty}$ into regular part $\mathcal{R}(M_{\infty})$ and singular part $\mathcal{S}(M_{\infty})$.
Here regular part $\mathcal{R}(M_{\infty})$ is a smooth manifold equipped with a smooth metric $g_{\infty}$. Locally around each regular point, the metric structure determined by $g_{\infty}$
is identical to $d_{\infty}$.  The singular part is a collection of discrete points.  
The regular part $\mathcal{R}(M_{\infty})$ has an exhaustion $\cup_{j=1}^{\infty} K_j$ by compact sets $K_j$. 
For each compact set $K=K_j$ for some $j$, one can find diffeomorphisms $\varphi_{K,i}$ from $K$ to $\varphi_{K,i}(K)$, a subset of $M_i$ such that
\begin{align*}
    &d_i(\varphi_{K,i}(x), p_i) \to d_{\infty}(x, p_{\infty}), \quad \forall \; x \in K; \\
    &\varphi_{K,i}^*(g_i) \sconv g_{\infty},  \quad \textrm{on} \; K; \\
    &\varphi_{K,i}^*(f_i)  \sconv f_{\infty},  \quad \textrm{on} \; K.
\end{align*}
Although in general the global distance structure induced by $g_{\infty}$ may not be the same as $d_{\infty}$, this difference does not happen whenever $M_{\infty}$ is an orbifold with isolated singularities since
$\mathcal{R}(M_{\infty})$ is geodesic convex.  

Recall that $M_{\infty}$ is an orbifold with discrete singularities. For each singular point, i.e., a point $p \in \mathcal{S}(M_{\infty})$, one can find a small $\delta=\delta(p)$, an open neighborhood $U$ of $p$,  
and a smooth nondegenerate map $\pi: B(0,\delta) \backslash \{0\} \to U \backslash \{p\}$ such that $h_{\infty}=\pi^*(g_{\infty})$ is a smooth metric on $B(0, \delta) \backslash \{0\}$ and $\lim_{x \to 0} h_{\infty}(x)$ exits. 
Moreover, by setting $h_{\infty}(0)=\lim_{x \to 0} h_{\infty}(x)$,  then $h_{\infty}$  is a smooth metric on $B(0,\delta)$. 
Note that $h_{\infty}$ is $\Gamma$-invariant, where $\Gamma$ is the local orbifold group of $p$. 
The triple $(B(0,\delta), \pi, U)$ is called an orbifold chart around $p$, $h_{\infty}$ is called the orbifold lifting of the metric tensor $g_{\infty}$.

By an orbifold Ricci shrinker $(M_{\infty},g_{\infty},f_{\infty})$ we mean that the identity $\text{Rc}_{\infty}+\text{Hess}f_{\infty}=\frac{1}{2}g_{\infty}$ holds smoothly on any orbifold chart after the lifting, 
where $f_{\infty}$ is a smooth function in the orbifold sense. In other words, $f_{\infty}$ is a smooth function in each orbifold chart.
Clearly, the scalar curvature $R_{\infty}$ is also a smooth function in the orbifold sense. By abuse of notation, we use $R_{\infty}(p)$ to denote the value $\pi^*(R_{\infty})(0)$. 

The following beautiful work of O. Munteanu and J.P. Wang is also important for us. 
\begin{thm}(Theorem 2.5 and 2.6 of Munteanu-Wang~\cite{MW14})
Suppose $(M,g,f)$ is a 4-dimensional Ricci shrinker with bounded scalar curvature.  Then there is a constant $L$ depending on $M$ such that
\begin{align}
  \sup_{M} \frac{|Rm|+|\nabla Rm|}{R} < L.
    \label{eqn:MH20_4}    
\end{align}
  \label{thm:MH20_4}
\end{thm}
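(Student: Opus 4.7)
The plan is to establish the bound in three stages: first show $|Rc| \leq C R$; next, using the four-dimensional curvature decomposition, show that the Weyl pieces $W^\pm$ are also bounded by a multiple of $R$, hence $|Rm| \leq CR$; finally upgrade to $|\nabla Rm| \leq CR$. The uniform machinery throughout is a Moser iteration against the weighted measure $e^{-f}\,dv$, whose finiteness follows from (\ref{E105}) and the quadratic growth estimate (\ref{E106}), and whose weighted Sobolev inequality is supplied by Perelman's entropy lower bound.

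For the Ricci step, I would start from the shrinker identities $\Delta_f R = R - 2|Rc|^2$ and $\Delta_f Rc = Rc - 2\,Rm \ast Rc$ and derive a differential inequality for a regularization of the scale-invariant quantity $u = |Rc|^2/R^2$. The cubic reaction term is controlled by $|Rm|\,u$, and the hypothesis that $R$ is bounded gives the starting $L^2$ integrability $\int_M |Rc|^2 e^{-f}\,dv < \infty$ after integrating $\Delta_f R$ against $e^{-f}$ and invoking the shrinker equation. A standard Moser iteration on $(M, e^{-f}\,dv)$ then yields $|Rc| \leq C R$. For the Weyl step, in dimension four one has $Rm = W^+ + W^- + (\text{Schouten part})$, with each half-Weyl tensor satisfying $\Delta_f W^\pm = W^\pm + W^\pm \ast W^\pm + W^\pm \ast Rc$. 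The refined four-dimensional Kato inequality $|\nabla |W^\pm||^2 \leq \tfrac{3}{5}|\nabla W^\pm|^2$ compensates for the bad sign in the Bochner calculation and allows a second Moser iteration, now exploiting the Ricci bound from the previous step to tame $W^\pm \ast Rc$, to produce $|W^\pm| \leq C R$ and hence $|Rm| \leq C R$. For the derivative bound, once $|Rm|$ is bounded one obtains a pointwise bound on $|\nabla Rm|$ via Shi-type local derivative estimates applied to the ancient flow (\ref{E104}); a final Moser iteration on $|\nabla Rm|^2/R^2$, using the differential inequality arising from commuting $\nabla$ through $\Delta_f Rm = Rm + Rm \ast Rm$, converts this into $|\nabla Rm| \leq C R$.

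The principal obstacle is the Weyl step. The cubic self-interaction $W^\pm \ast W^\pm$ is a critical nonlinearity and the Moser iteration barely closes: one must use the exact Kato constant $\tfrac{3}{5}$, which is special to dimension four and to half-Weyl curvature, and the $L^2$ base case relies on the finiteness of $\int_M |Rm|^2 e^{-f}\,dv$, itself obtained by integrating the Bochner-type inequality $\Delta_f |Rm|^2 \geq 2|Rm|^2 - C|Rm|^3$ against $e^{-f}$ and using the scalar curvature bound. It is the matching of these algebraic constants that makes the argument work and that singles out dimension four; in higher dimensions neither the curvature decomposition nor the Kato improvement is available in this form.
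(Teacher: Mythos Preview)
The paper does not supply its own proof of this statement; it is quoted verbatim as Theorems~2.5 and~2.6 of Munteanu--Wang~\cite{MW14} and used as a black box. So there is no in-paper argument to compare against.

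That said, your outline is a faithful sketch of the strategy in~\cite{MW14} itself: the three-stage progression (first $|Rc|\le CR$, then $|W^\pm|\le CR$ via the four-dimensional curvature decomposition and the refined Kato inequality for half-Weyl curvature, finally $|\nabla Rm|\le CR$) is exactly the Munteanu--Wang route, and your identification of the Kato constant as the critical dimension-four ingredient is correct. One minor remark: since the constant $L$ in (\ref{eqn:MH20_4}) is allowed to depend on $M$, you do not need Perelman's entropy to supply a \emph{uniform} weighted Sobolev inequality; any Sobolev inequality on the given shrinker will do for the Moser iteration, and indeed Munteanu--Wang work this way.
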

In particular, (\ref{eqn:MH20_4}) implies that each soliton in $\mathcal{M}^*(A,H)$ has bounded curvature and Theorem~\ref{T201} can be applied.

Now for a general shrinking soliton $(M,g,f)$, as it can be regarded as a  normalized Ricci flow solution, we have the following elliptic equations where $\Delta_f=\Delta-\langle \nabla, \nabla f \rangle$.
\begin{align}
&\Delta_{f} f=\frac{n}{2}-f, \label{eqn:MH23_7}\\
&\Delta_{f} f^{-1}=f^{-1}-2Rf^{-3}+ \left(2-\frac{n}{2} \right) f^{-2}.    \label{eqn:MH22_2}
\end{align}
The particular case of (\ref{eqn:MH22_2}) in dimension four is
\begin{align}
   \Delta_f f^{-1}=f^{-1}-2Rf^{-3}.     \label{eqn:MH22_3}
\end{align}
The following evolution equations are well-known(c.f. for example, Munteanu-Wang~\cite{MW14}):
\begin{align}
&\Delta_f R =R-2|\text{Rc}|^2,  \label{E215}\\
&\Delta_f \frac{|\text{Rc}|^2}{R}=\frac{|\text{Rc}|^2}{R}+2\frac{|\text{Rc}|^4}{R^2} +2\frac{|\nabla R \text{Rc}-\nabla \text{Rc} R|^2}{R^3}-4\frac{\text{Rm}(\text{Rc},\text{Rc})}{R}.  \label{E216}\\
&\Delta_f R_{ij}=R_{ij}-2R_{ikjl}R_{kl}, \label{eqn:MH23_8}\\
&\nabla_k R_{jk}=R_{jk}f_k=\frac{1}{2} \nabla_j R, \label{eqn:MH23_9}
\end{align}

\begin{defn}
Let $\alpha$ be the maximum of $R$ on the unit sphere $\partial B(p, 1)$.
We define auxiliary functions as follows:
\begin{align}
  &U \triangleq  \frac{|Rc|^2}{R}, \quad V \triangleq \frac{R}{\alpha}, \quad Z \triangleq \frac{|Rc \nabla R -R\nabla Rc|^2}{R^3}. \label{eqn:MH23_11}
\end{align}
\label{dfn:MH23_2}
\end{defn}

\begin{lem}
The auxiliary functions satisfy the following elliptic relationships. 
\begin{align}
&\Delta_f V=V-2UV, \label{E2160a}\\
&\Delta_f U \geq (1-4|Rm|)U+2\left(Z+U^2 \right).   \label{E216a}
\end{align}
\label{lma:MI02_2}
\end{lem}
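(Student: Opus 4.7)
The plan is to read off both identities as direct consequences of the evolution equations \eqref{E215} and \eqref{E216} recorded just above the lemma, since $U$, $V$, $Z$ are defined as explicit algebraic expressions in $R$, $|\mathrm{Rc}|^2$ and their derivatives, and $\alpha$ is a fixed positive real number attached to the soliton (not a function on $M$). Thus no new PDE analysis is needed; the content is purely algebraic, with one pointwise curvature estimate at the end.

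For the $V$-equation, I would note that $\alpha$ is constant, so $\Delta_f V = \alpha^{-1} \Delta_f R$. Plugging in \eqref{E215} gives $\Delta_f V = \alpha^{-1}(R - 2|\mathrm{Rc}|^2) = V - 2\,\alpha^{-1}|\mathrm{Rc}|^2$, and then rewriting
\[
\alpha^{-1}|\mathrm{Rc}|^2 \;=\; \frac{|\mathrm{Rc}|^2}{R}\cdot\frac{R}{\alpha} \;=\; U\,V
\]
produces exactly $\Delta_f V = V - 2UV$. This identity is valid wherever $R>0$, which on a non-flat shrinker is all of $M$ by \eqref{eqn:MI03_2}.

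For the $U$-inequality, I would simply rewrite \eqref{E216} in the new notation:
\[
\Delta_f U \;=\; U + 2U^{2} + 2Z \;-\; \frac{4\,\mathrm{Rm}(\mathrm{Rc},\mathrm{Rc})}{R},
\]
where the $Z$ term appears by matching the numerator $|\nabla R\,\mathrm{Rc} - \nabla\mathrm{Rc}\,R|^2$ with the definition of $Z$ in \eqref{eqn:MH23_11}. Then the only thing left is to control the curvature term. Viewing $\mathrm{Rc}$ as a symmetric $2$-tensor and $\mathrm{Rm}$ as an operator on such tensors, the pointwise Cauchy--Schwarz type bound
\[
|\mathrm{Rm}(\mathrm{Rc},\mathrm{Rc})| \;\le\; |\mathrm{Rm}|\,|\mathrm{Rc}|^{2}
\]
gives $-4\,\mathrm{Rm}(\mathrm{Rc},\mathrm{Rc})/R \ge -4|\mathrm{Rm}|\,|\mathrm{Rc}|^{2}/R = -4|\mathrm{Rm}|\,U$. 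Substituting this lower bound and regrouping yields
\[
\Delta_f U \;\ge\; (1-4|\mathrm{Rm}|)U + 2(Z+U^{2}),
\]
which is the claimed inequality.

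There is no serious obstacle in this lemma; the only point requiring attention is the curvature estimate $|\mathrm{Rm}(\mathrm{Rc},\mathrm{Rc})|\le|\mathrm{Rm}|\,|\mathrm{Rc}|^2$, which is a standard consequence of treating $\mathrm{Rm}$ as a self-adjoint operator on symmetric $2$-tensors and applying the operator norm inequality. The role of the lemma in the paper is to prepare the PDE system (in the auxiliary variables $U,V,Z$) on which the delicate a priori estimates of Section 4 and the contradiction argument for Theorem~\ref{thm:1} will operate, so it is natural that the statement is just a clean repackaging of the standard Bochner-type identities for shrinkers.
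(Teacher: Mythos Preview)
Your proposal is correct and matches the paper's own proof, which simply records that \eqref{E2160a} follows from \eqref{E215} and \eqref{E216a} from \eqref{E216}; you have merely written out the two lines of algebra and the pointwise bound $|\mathrm{Rm}(\mathrm{Rc},\mathrm{Rc})|\le |\mathrm{Rm}|\,|\mathrm{Rc}|^2$ that the paper leaves implicit.
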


\begin{proof}
The equation (\ref{E2160a}) follows from (\ref{E215}). 
The inequality (\ref{E216a}) follows from (\ref{E216}). 
\end{proof}

\section{Rigidity theorems related to flat cones}

We investigate the positive solution of the equation 
\begin{align}
\Delta_f v=v
\label{A01}
\end{align}
on $\mathbb \{\R^4/\Gamma\} \backslash \{0\}$, where $\Delta_f=\Delta-\langle \nabla f,\nabla \cdot \rangle$ and $f=\frac{r^2}{4}=\frac{|x|^2}{4}$.
By lifting to the orbifold covering, it suffices to study the solution of (\ref{A01}) for the special case $\Gamma=\{1\}$, where $\mathbb \{\R^4/\Gamma\} \backslash \{0\}$ becomes punctured Euclidean space. 
All the results in this section hold for the general dimension $n$, but we will only focus on dimension $4$ for not distracting the readers' attention from the main stream of this paper. 
For simplicity of notation, we use $B(r)$ to denote the ball of radius $r$ in $\R^4$ centered at the origin $0$.

\begin{thm}[\textbf{Rigidity of eigenfunctions}]
Suppose $v>0$ satisfies (\ref{A01}) on $\mathbb \{\R^4/\Gamma\} \backslash \{0\}$ and $v$ is bounded at infinity, i.e., $\displaystyle \limsup_{x \to \infty} v(x) <\infty$. 
Then we have
\begin{align}\label{A04}
v=br^{-2}
\end{align}
for some constant $b$.

\label{AT02}
\end{thm}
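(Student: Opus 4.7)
The plan is to mimic the classical B\^{o}cher decomposition theorem for harmonic functions, adapted to the drifted Laplacian $\Delta_f$. The proof combines spherical averaging, spherical harmonic expansion, and the positivity hypothesis. By lifting to the orbifold cover, it suffices to work with $\Gamma = \{1\}$, i.e., on $\R^4 \setminus \{0\}$.

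First I would analyze the radial average $\bar v(r) = \frac{1}{|S^3_r|}\int_{\partial B(r)} v\, d\sigma$. Using the polar form $\Delta = \partial_r^2 + \frac{3}{r}\partial_r + \frac{1}{r^2}\Delta_{S^3}$ together with $\langle \nabla f, \nabla v \rangle = \frac{r}{2}\partial_r v$, the equation $\Delta_f v = v$ averages to the ODE
\begin{equation*}
\bar v''(r) + \left(\frac{3}{r}-\frac{r}{2}\right)\bar v'(r) - \bar v(r) = 0.
\end{equation*}
Direct substitution shows $\bar v_1(r) = r^{-2}$ is a solution, and reduction of order produces $\bar v_2(r) = r^{-2}e^{r^2/4}$. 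Since $\bar v$ inherits boundedness at infinity from $v$, we conclude $\bar v(r) = b r^{-2}$ for some constant $b \geq 0$.

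Next, expand $v(r,\omega) = \sum_{k\geq 0} v_k(r) Y_k(\omega)$, where $\{Y_k\}$ is an $L^2(S^3)$-orthonormal basis of $-\Delta_{S^3}$-eigenfunctions with eigenvalues $\lambda_k = k(k+2)$. Each mode satisfies
\begin{equation*}
v_k'' + \left(\frac{3}{r}-\frac{r}{2}\right)v_k' - \left(1 + \frac{\lambda_k}{r^2}\right)v_k = 0.
\end{equation*}
A Frobenius analysis at $r=0$ produces indices $a \in \{k,\, -k-2\}$, while asymptotic balance at $r=\infty$ produces a decaying branch $\sim r^{-2}$ and a rapidly growing branch $\sim r^a e^{r^2/4}$. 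Examining the coefficient recursion (which satisfies $a_{2m}/a_{2m-2} \sim 1/(4m)$, matching the Taylor coefficients of $e^{r^2/4}$) shows that each Frobenius basis solution at $0$ carries a nonzero exponentially growing tail at infinity. Hence the unique (up to scaling) bounded-at-infinity solution $\psi_k$ cannot be purely regular at $0$; its $r^{-k-2}$-singular component has nonzero coefficient, so $|\psi_k(r)| \sim C_k\, r^{-k-2}$ as $r \to 0$.

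The positivity $v > 0$ now kills all higher modes. Writing $v_k = \gamma_k \psi_k$, we have
\begin{equation*}
|\gamma_k|\,|\psi_k(r)|\,\|Y_k\|_{L^2}^2 = \left|\int_{S^3} v(r\omega)\, Y_k(\omega)\, d\omega\right| \leq \|Y_k\|_\infty \int_{S^3} v(r\omega)\, d\omega = C'_k\, \bar v(r).
\end{equation*}
For any $k \geq 1$, $\bar v(r)/|\psi_k(r)| \sim r^k \to 0$ as $r \to 0$, forcing $\gamma_k = 0$. Therefore $v = b r^{-2}$. The main obstacle lies in the asymptotic matching of the previous paragraph: one must rigorously verify that the ``regular'' branch at $0$ has a genuinely nonzero $e^{r^2/4}$ coefficient at infinity, and handle the logarithmic correction that can appear because the Frobenius indices differ by the integer $2(k+1)$. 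The logarithmic term does not affect the conclusion since $-k-2 < k$, so the $r^{-k-2}$ part still dominates as $r \to 0$.
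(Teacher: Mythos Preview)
Your approach via spherical harmonic expansion is genuinely different from the paper's. The paper follows the classical B\^{o}cher route more literally: it first proves a Harnack inequality on spheres for positive solutions, then uses an iteration argument to show that any positive solution on a punctured ball with constant boundary data must equal its own spherical average and hence be radial. From this it extracts a decomposition $v = u + a\,e^{r^2/4}r^{-2} + b\,r^{-2}$ with $u$ smooth across the origin, and finishes with an energy estimate (multiply $\Delta_f u = u$ by $\phi^2 u\,e^{-f}$ and integrate against a cutoff) that forces any globally bounded solution on $\R^4$ to vanish. The paper therefore never analyzes the higher spherical modes or any connection problem for the mode ODEs; positivity is used qualitatively through the Harnack/iteration step rather than through your pointwise bound $|v_k(r)|\le C\bar v(r)$.

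Your outline is sound in principle, but the step you yourself flag as the ``main obstacle'' is a genuine gap as written. The coefficient-ratio observation $a_{2m}/a_{2m-2}\sim 1/(4m)$ suggests, but does not prove, that the regular Frobenius branch at $r=0$ acquires a nonzero $e^{r^2/4}$ component at infinity; making this rigorous requires a careful treatment of the irregular singular point at $\infty$ (and of the possible logarithmic term coming from the integer index gap), which you have not supplied. There is, however, a clean way to close this without any connection analysis, and it is exactly the energy estimate the paper uses for its function $u$: if the $r^{k}$-branch $\phi_k$ were bounded at infinity, then $w(x)=\phi_k(|x|)\,Y_k(x/|x|)$ would be a smooth bounded solution of $\Delta_f w = w$ on all of $\R^4$, and the cutoff inequality
\[
\int_{\R^4} |\nabla(\phi w)|^2\,e^{-|x|^2/4}\,dx \;\le\; \int_{\R^4} |\nabla\phi|^2\,w^2\,e^{-|x|^2/4}\,dx
\]
with $\phi$ supported on $B(2r)$, $|\nabla\phi|\le C/r$, forces $\nabla w\equiv 0$ and hence $w\equiv 0$. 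Inserting this single lemma makes your argument complete; with it, your route is arguably more direct than the paper's, at the cost of carrying the spherical harmonic machinery.
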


We first consider the possible radial solutions. Let $v(x)=h(r)$ be an radial solution of $\Delta_fv=v$. Then from direct computation, we have
\begin{align}\label{A02}
h''(r)+ \left(\frac{3}{r}-\frac{r}{2} \right)h'(r)=h(r)
\end{align}
for $r>0$. \eqref{A02} is a second order linear ODE, the basis consists of $e^{r^2/4}r^{-2}$ and $r^{-2}$. Therefore the general radial solution of \eqref{A01} is $c_1e^{r^2/4}r^{-2}+c_2r^{-2}$ for some constants $c_1,c_2$.

Now for any solution $v$ of \eqref{A01}, we define its spherical average
$$
A[v](r)=\frac{1}{\Omega_3r^3}\int_{S^r}v \,d\sigma
$$
where $\Omega_3$ is the volume of unit $S^3$.
We have the following decomposition lemmas, whose proofs are similar to those in Theorem 3.9 of Axler-Bourdon-Ramey~\cite{ABR92}.

\begin{lem}\label{AL01}
If $\Delta_fv=v$, then $\Delta_f A[v]=A[v]$.
\end{lem}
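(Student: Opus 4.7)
The plan is to reduce the lemma to the rotation-invariance of the drifted Laplacian $\Delta_f$. Because $f(x)=|x|^2/4$ depends only on $|x|$, the operator $\Delta_f$ commutes with the natural $O(4)$-action on $\R^4\setminus\{0\}$: concretely, if $R\in O(4)$ and $v_R(x):=v(Rx)$, then a direct change of variables gives $\Delta_f v_R=v_R$.

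Averaging over the Haar measure $d\mu$ on $O(4)$, I would set
\begin{align*}
w(x)\;:=\;\int_{O(4)} v(Rx)\,d\mu(R).
\end{align*}
By linearity (and dominated convergence, justified on any compact subset of $\R^4\setminus\{0\}$ where $v$ is smooth), $\Delta_f w=w$ on $\R^4\setminus\{0\}$. Since $w$ is $O(4)$-invariant by construction and $O(4)$ acts transitively on each sphere $S^r$, one checks that $w(x)$ depends only on $|x|$ and in fact $w(x)=A[v](|x|)$. Viewing $A[v]$ as a radial function on $\R^4\setminus\{0\}$, this gives $\Delta_f A[v]=A[v]$.

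Equivalently, one can differentiate under the integral in $A[v](r)=\Omega_3^{-1}\int_{S^3}v(r\omega)\,d\omega$. Using the polar decomposition $\Delta=\partial_r^2+3r^{-1}\partial_r+r^{-2}\Delta_{S^3}$, the spherical-Laplacian term integrates to zero on $S^3$. Since $\nabla f=\tfrac{1}{2}x$ is radial, $\langle\nabla f,\nabla v\rangle=\tfrac{r}{2}\partial_r v$, whose spherical average is exactly $\tfrac{r}{2}A[v]'(r)=\langle\nabla f,\nabla A[v]\rangle$ (the latter because $A[v]$ is radial on $\R^4$). Integrating the pointwise identity $\Delta v = v+\langle\nabla f,\nabla v\rangle$ over $S^3$ then produces
\begin{align*}
A[v]''(r)+\tfrac{3}{r}A[v]'(r)-\tfrac{r}{2}A[v]'(r)\;=\;A[v](r),
\end{align*}
which is precisely $\Delta_f A[v]=A[v]$.

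There is no real obstacle: the only inputs are that $\Delta_f$ is rotation-invariant (because $f$ is radial) and that the spherical average is obtained by averaging over $O(4)$, followed by a differentiation-under-the-integral computation. The argument is structurally identical to the classical fact, cited above, that the spherical mean of a harmonic function is again harmonic (Theorem 3.9 of~\cite{ABR92}); the only change is bookkeeping the extra drift term, which survives averaging precisely because $\nabla f$ is radial.
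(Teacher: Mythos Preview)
Your proposal is correct, and your second ``equivalently'' paragraph is essentially identical to the paper's own proof: both write $A[v](r)=\Omega_3^{-1}\int_{S^3}v(r\omega)\,d\omega$, apply the polar decomposition $\Delta=\partial_r^2+3r^{-1}\partial_r+r^{-2}\Delta_{S^3}$, observe that $\int_{S^3}\Delta_{S^3}v\,d\omega=0$, and use $\langle\nabla f,\nabla v\rangle=\tfrac{r}{2}\partial_r v$. Your first paragraph (averaging over $O(4)$ via Haar measure) is a slightly more conceptual repackaging of the same idea, but adds nothing the direct computation does not already give.
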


\begin{proof}
From the change of variable, $A[v](r)=\frac{1}{\Omega_3}\int_{S^3}v(rw) \,dw$, where $dw$ is the volume form on the unit sphere $S^3$.
Therefore,
\begin{align*}
\Delta_f A[v]&=\frac{1}{\Omega_3}\int_{S^3}\left(\frac{d^2}{dr^2}+\left(\frac{3}{r}-\frac{r}{2}\right)\frac{d}{dr} \right)v(rw) \,dw \\ 
&=\frac{1}{\Omega_3}\int_{S^3}\left(\frac{d^2}{dr^2}+\left(\frac{3}{r}-\frac{r}{2}\right)\frac{d}{dr}+\frac{\Delta_{S^3}}{r^2} \right)v(rw) \,dw \\ 
&=\frac{1}{\Omega_3}\int_{S^3}\Delta_f v(rw) \,dw=\frac{1}{\Omega_3}\int_{S^3} v(rw) \,dw=A[v], 
\end{align*}
where the second identity is true since $\int_{S^3}\Delta_{S^3} v(rw) \,dw=0$ and the third identity holds since we have $\Delta=\frac{d^2}{dr^2}+\frac{3}{r}\frac{d}{dr}+\frac{\Delta_{S^3}}{r^2}$ and $\langle \nabla v,\nabla f \rangle=\frac{r}{2}\frac{d}{dr}v$.
\end{proof}

\begin{lem}\label{AL02}
There exists a constant $c>0$ such that for every positive solution $v$ of \eqref{A01} on $B(1)$, we have 
$$
v(x)>cv(y)
$$
for any $0<|x|=|y| \le 1/2$.
\end{lem}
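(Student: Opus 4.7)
The plan is to reduce Lemma~\ref{AL02} to the classical interior Harnack inequality through a rescaling step, followed by a chaining argument along the sphere $S_r := \partial B(r)$. The observation driving everything is that on $B(1)\setminus\{0\}$ equation \eqref{A01} is a uniformly elliptic linear equation in non-divergence form,
$$\Delta v - \tfrac{1}{2}\langle x,\nabla v\rangle - v = 0,$$
whose drift $\tfrac{1}{2}x$ and potential $1$ are uniformly bounded on $B(1)$. Thus standard elliptic theory applies on balls that stay away from the origin, and the only subtlety is to make all estimates uniform in the radius $r=|x|=|y|\in(0,1/2]$.

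First I would rescale around an arbitrary $x_0$ with $|x_0|=r\in(0,1/2]$. Setting $y_0 := x_0/r \in S^3$ and $\tilde v(y) := v(ry)$ on the ball $B(y_0,1/4)$, a direct calculation using $\nabla_y\tilde v = r\,\nabla_x v$ and $\Delta_y\tilde v = r^2\Delta_x v$ gives
$$\Delta \tilde v - \tfrac{r^2}{2}\langle y,\nabla \tilde v\rangle - r^2\,\tilde v = 0.$$
Since $r\le 1/2$ and $B(y_0,1/4)\subset\{3/4\le|y|\le 5/4\}$, the new coefficients are bounded by absolute constants and the ball sits at definite distance from the origin. Thus there is no singularity in the domain of $\tilde v$, and the PDE satisfied by $\tilde v$ is a uniformly elliptic linear equation whose ellipticity constants and bounds on lower-order coefficients are independent of $r$ and of $y_0\in S^3$.

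Next I would invoke the classical interior Harnack inequality for positive solutions of uniformly elliptic linear equations (of either Krylov--Safonov or De Giorgi--Nash--Moser type) applied to $\tilde v$ on $B(y_0,1/4)$. This yields a constant $C=C(n)$, independent of $r\in(0,1/2]$ and of $y_0\in S^3$, such that
$$\sup_{B(y_0,1/8)} \tilde v \le C\,\inf_{B(y_0,1/8)} \tilde v,$$
and scaling back gives $\sup_{B(x_0,r/8)} v \le C\,\inf_{B(x_0,r/8)} v$ for every such $x_0$.

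Finally I would cover $S_r$ by a uniformly bounded number $N=N(n)$ of balls $B(x_i,r/16)$ with $x_i\in S_r$; by rescaling to $S^3$, $N$ depends only on the dimension. Then any two points $x,y\in S_r$ lie in a chain of at most $N$ such balls whose $r/8$-enlargements overlap, and iterating the Harnack estimate along the chain produces $v(x)\ge C^{-N} v(y)$, so the conclusion holds with $c := C^{-N}$. The only point that requires vigilance is the uniformity in $r$ of the Harnack constant, and this is exactly what the rescaling in the first step ensures: after rescaling, all the geometry is confined to a fixed annulus and the coefficients are $O(1)$ as $r\to 0$. Everything else is routine.
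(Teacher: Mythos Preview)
Your proof is correct and follows essentially the same route as the paper: rescale by the radius $a=|x|=|y|$ so that the resulting equation $\Delta\tilde v-\tfrac{a^2}{2}\langle y,\nabla\tilde v\rangle=a^2\tilde v$ has uniformly bounded coefficients independent of $a\in(0,1/2]$, then invoke the standard elliptic Harnack inequality. The only difference is cosmetic: the paper applies the Harnack inequality directly on the sphere $|y|=1/2$ by citing Theorem~8.20 of Gilbarg--Trudinger, whereas you make the underlying covering/chaining argument explicit.
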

\begin{proof}
When $|x|=|y|=1/2$, the conclusion follows from the standard Harnack inequality for the elliptic operator $\Delta_f-Id$, see \cite[Theorem $8.20$]{GT01}. For $|x|=|y|=a \le 1/2$, we set $\tilde v(x)=v(ax)$, then 
$$
\Delta_{\tilde f}\tilde v=a^2\tilde v
$$ 
where $\tilde f(x)=f(ax)$. Again, we have the Harnack inequality for $\tilde v$ whenever $|x|=|y|=1/2$. The Harnack constant is independent of $a$ as the coefficients of the above elliptic equations are uniformly controlled.
\end{proof}

\begin{lem}\label{AL03}
If $v$ is a positive solution of \eqref{A01} on $B(1) \backslash \{0\}$ such that $v$ tends to a constant as $|x| \to 1$, then
$v=A[v]$.
\end{lem}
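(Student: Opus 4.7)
The plan is to show that $w := v - A[v]$ vanishes identically, via a spherical harmonic decomposition and a radial energy identity. By Lemma~\ref{AL01}, $w$ solves $\Delta_f w = w$ on $B(1) \setminus \{0\}$ with vanishing spherical average on every sphere; the hypothesis $v \to c$ as $|x| \to 1$ forces $A[v] \to c$ as well, so $w \to 0$ uniformly at the outer boundary. As a first input, Lemma~\ref{AL02} applied to $v$ yields $|w(x)| \leq (c^{-1} - 1) A[v](|x|)$ for $|x| \leq 1/2$, and since any positive radial solution of \eqref{A01} is a linear combination of $r^{-2}$ and $e^{r^2/4} r^{-2}$, we have $A[v](r) = O(r^{-2})$ as $r \to 0$, hence $|w(x)| = O(|x|^{-2})$ near the origin.

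Next I would expand $w(r\theta) = \sum_{k \geq 1} w_k(r) Y_k(\theta)$ in an $L^2(S^3)$-orthonormal basis of spherical harmonics (the $k = 0$ mode is absent by the zero-average property). Using $-\Delta_{S^3} Y_k = k(k+2) Y_k$ and separation of variables, each coefficient satisfies
\[
w_k''(r) + \Big(\tfrac{3}{r} - \tfrac{r}{2}\Big) w_k'(r) - \Big(\tfrac{k(k+2)}{r^2} + 1\Big) w_k(r) = 0, \qquad k \geq 1,
\]
whose indicial roots at $r = 0$ are $k$ and $-k-2$, producing fundamental solutions behaving like $r^k$ and $r^{-k-2}$. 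Parseval's identity combined with the bound $|w| = O(r^{-2})$ yields $|w_k(r)| = O(r^{-2})$, and since $-k-2 < -2$ for every $k \geq 1$, the singular mode is ruled out and $w_k(r) = O(r^k)$ near $0$. At the outer boundary, uniform vanishing of $w$ together with elliptic regularity up to $\partial B(1)$ gives $w_k(1) = 0$.

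To close, I would use a weighted radial energy identity. The key observation is
\[
w_k'' + \Big(\tfrac{3}{r} - \tfrac{r}{2}\Big) w_k' = (r^3 e^{-r^2/4})^{-1} \big( r^3 e^{-r^2/4} w_k' \big)',
\]
so multiplying the ODE by $w_k \cdot r^3 e^{-r^2/4}$ and integrating by parts on $(0,1)$ yields
\[
\int_0^1 \Big[ (w_k')^2 + \Big(\tfrac{k(k+2)}{r^2} + 1\Big) w_k^2 \Big] r^3 e^{-r^2/4}\, dr = \big[ w_k w_k'\, r^3 e^{-r^2/4} \big]_0^1 = 0;
\]
the boundary contribution at $r = 1$ vanishes since $w_k(1) = 0$, while at $r = 0$ the estimate $w_k w_k'\, r^3 = O(r^{2k+2})$ for $k \geq 1$ takes care of it. Non-negativity of the integrand then forces $w_k \equiv 0$ for every $k \geq 1$, whence $w \equiv 0$ and $v = A[v]$.

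The most delicate step is the singular-mode elimination: converting the pointwise Harnack bound into coefficient-wise control of the $w_k$ via Parseval, and certifying that the $L^2$ spherical-harmonic expansion is genuinely realized by the classical separation-of-variables fundamental solutions. Everything else (interior smoothness of $w$ on $B(1) \setminus \{0\}$, boundary regularity up to $\partial B(1)$ for the smooth coefficients of $\Delta_f - \mathrm{Id}$) reduces to standard elliptic regularity.
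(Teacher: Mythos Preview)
Your argument is correct, but it takes a substantially different route from the paper's. The paper proves Lemma~\ref{AL03} by an iteration in the style of the classical B\^{o}cher theorem: starting from the Harnack estimate $v > c\,A[v]$ on $B(1/2)$ (Lemma~\ref{AL02}), one extends this inequality to all of $B(1)\setminus\{0\}$ by the maximum principle for $\Delta_f - \mathrm{Id}$ on the outer annulus, then applies the same reasoning to the positive solution $v - c\,A[v]$ (which again tends to a constant at $\partial B(1)$) to obtain $v > g(c)\,A[v]$ with $g(t) = c + t(1-c)$; iterating yields $v \geq g^{(m)}(c)\,A[v] \to A[v]$, and since $A[v-A[v]] = 0$ the nonnegative function $v - A[v]$ must vanish identically.

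Your approach instead separates variables, kills the singular Frobenius branch of each nonzero spherical-harmonic coefficient via the $O(r^{-2})$ bound inherited from Harnack, and then uses the Sturm--Liouville form of the radial operator together with the Dirichlet condition at $r=1$ to force each coefficient to vanish by an energy identity. This is longer and invokes more structure (the spectrum of $\Delta_{S^3}$, indicial analysis), but it is entirely linear and makes the borderline growth rate at the origin explicit. The paper's iteration is shorter and more robust---it uses nothing about $S^3$ beyond the averaging operation and transfers verbatim to the harmonic B\^{o}cher setting---but gives less quantitative information. One minor remark: you invoke ``elliptic regularity up to $\partial B(1)$'' to justify the boundary term at $r=1$, but this is not really needed; since each $w_k$ solves a second-order linear ODE with coefficients smooth on $(0,\infty)$, it extends smoothly across $r=1$ automatically, and $w_k(1)=\lim_{r\to 1^-} w_k(r)=0$ follows directly from the uniform convergence of $w$.
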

\begin{proof}
From lemma \ref{AL02}, there exists a constant $c \in (0,1)$ such that $v-cA[v] >0$ on $B(1/2)$. On the other hand, since $v-cA[v] \to 0$ as $|x| \to 1$, by the strong maximal principle on $B(1) \backslash B(1/2)$, we conclude that $v-cA[v] >0$ on $B(1)$. Similarly, $v-cA[v]>cA[v-cA[v]]=cA[v]-c^2A[v]$ since $v-cA[v]$ satisfies the same condition as $v$. By iteration, we conclude that
$$
v > g^{(m)}(c)A[v]
$$
for any integer $m>0$, where $g(t)=c+t(1-c)$, $g^{(m)}$ is the m-th iteration of $g$. 
Now as $g^{(m)}(c) \to 1$ if $m \to \infty$, we have $v \ge A[v]$. On the other hand, since $A[v-A[v]]=0$, we conclude that $v=A[v]$ on $B(1)$.
\end{proof}

\begin{lem}
Suppose a positive function $v$ satisfies the equation $\Delta_fv=v$ on $B(1)\backslash \{0\}$, then there exist constants $a,b$ and a smooth function $u$ on $B(1)$ such that
\begin{align}\label{A03}
v=u + a e^{r^2/4}r^{-2} + br^{-2}
\end{align}
on $B(1)\backslash \{0\}$.

\label{lma:MH27_1}
\end{lem}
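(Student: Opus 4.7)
The plan is to mimic the classical Bôcher decomposition for harmonic functions: first extract the radial singular part of $v$ via its spherical average, and then use positivity---through the Harnack inequality of Lemma~\ref{AL02}---to rule out singular contributions from the non-radial spherical-harmonic modes. By Lemma~\ref{AL01}, $A[v]$ is a positive radial solution of $\Delta_f w = w$ on $(0,1)$ and therefore lies in the two-dimensional space of solutions of the ODE~\eqref{A02}, giving $A[v](r) = a\, e^{r^2/4} r^{-2} + b\, r^{-2}$ for constants $a, b$. Set $u := v - A[v]$; then $u$ solves $\Delta_f u = u$ on $B(1) \setminus \{0\}$ with vanishing spherical average, and Lemma~\ref{AL02} gives $v(x) \le c^{-1} A[v](|x|)$ for $|x| \le 1/2$, so $|u(x)| \le C|x|^{-2}$ near the origin. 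The task reduces to showing that $u$ extends smoothly across $0$.

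For the heart of the argument I would expand $u$ in spherical harmonics on $S^3$, writing $u(r, \omega) = \sum_{k \ge 1,\, j} h_{k,j}(r)\, Y_{k,j}(\omega)$, where the $k = 0$ summand is absent because $A[u] \equiv 0$. Separation of variables gives
\[
h_{k,j}'' + \Bigl( \tfrac{3}{r} - \tfrac{r}{2} \Bigr) h_{k,j}' - \tfrac{k(k+2)}{r^2}\, h_{k,j} = h_{k,j},
\]
whose indicial equation at $r = 0$, $\alpha^2 + 2\alpha - k(k+2) = 0$, has roots $\alpha = k$ and $\alpha = -(k+2)$. By Frobenius theory this ODE admits a regular branch $\phi_k^+(r) \sim r^k$ and a singular branch $\phi_k^-(r) \sim r^{-(k+2)}$, possibly with subleading logarithmic corrections arising from the integer separation $2k+2$ of the roots. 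Parseval on each sphere converts $|u(x)| \le C|x|^{-2}$ into $|h_{k,j}(r)| \le C r^{-2}$; since $k+2 \ge 3 > 2$ for every $k \ge 1$, this forces the coefficient of $\phi_k^-$ to vanish, so each $h_{k,j}$ is a constant multiple of $\phi_k^+$. To upgrade this modewise regularity to genuine smoothness of $u$, I would use that $\phi_k^+$ is increasing on $(0, 1/2]$ (any critical point would be a strict local minimum by the ODE, contradicting the initial growth $\phi_k^+(r) \sim r^k$) together with Parseval to conclude $\|u(r, \cdot)\|_{H^s(S^3)} \le \|u(1/2, \cdot)\|_{H^s(S^3)}$ for every $s$; since $u$ is smooth on $\partial B(1/2)$ the right-hand side is finite, so Sobolev embedding gives $u \in L^\infty$ near $0$, and a standard removable-singularity argument for the elliptic operator $\Delta_f - \mathrm{Id}$ extends $u$ to a smooth solution on $B(1/2)$. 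Smoothness on $B(1) \setminus \overline{B(1/2)}$ is automatic, and~\eqref{A03} follows.

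The main technical hurdle is the exponent bookkeeping in the middle step: the Harnack bound $|u| \le C r^{-2}$ just barely clears the mildest non-radial singular exponent $-3$, and the whole argument rests on the gap between $-2$ and $-3$. One must verify carefully via Frobenius theory that $\phi_k^-$ really has leading order $r^{-(k+2)}$ notwithstanding the possible logarithmic corrections produced by the integer separation of the indicial roots, but these corrections remain subleading and do not alter the leading exponent. Once the singular modes have been eliminated, the remaining steps are a standard regularity package for the operator $\Delta_f - \mathrm{Id}$.
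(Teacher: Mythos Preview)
Your argument is correct but follows a different path from the paper. The paper solves the Dirichlet problem for $\Delta_f-\mathrm{Id}$ on $B(1/2)$ with boundary data $v$, calls the solution $u$, and then applies Lemma~\ref{AL03} to the positive function $w=r^{-2}+v-u$ (which tends to a constant on $\partial B(1/2)$) to conclude that $w$ is radial; this immediately puts $v-u$ in the two-dimensional span of $e^{r^2/4}r^{-2}$ and $r^{-2}$. In particular the paper never touches the higher spherical-harmonic modes or Frobenius theory: the iterative Harnack argument of Lemma~\ref{AL03} does all the work via the maximum principle. Your route instead subtracts off $A[v]$, then expands the remainder in spherical harmonics and uses the indicial-exponent gap ($-(k+2)\le -3<-2$ for $k\ge1$) together with the Harnack bound $|u|\le Cr^{-2}$ to kill every singular branch mode by mode, finishing with a removable-singularity argument. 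Your approach is more explicit and yields finer mode-wise information, at the cost of the Frobenius bookkeeping and the monotonicity/Sobolev step needed to pass from modewise regularity to genuine smoothness; the paper's approach is shorter and stays entirely within maximum-principle territory, but relies on the additional Lemma~\ref{AL03}, which your argument bypasses.
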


\begin{proof}
We first find a solution $u$ of (\ref{A01}) on $B(1/2)$ such that $u=v$ on $\partial B(1/2)$. Then we consider the function
$$
w=r^{-2}+v-u.
$$
Since $w \to +\infty$ as $|x| \to 0$, by maximum principle, $w$ is positive. Now from lemma \ref{AL03}, $w$ is radial on $B(1/2)$. Therefore on $B(1/2)$, $w=ae^{r^2/4}r^{-2}+(b+1)r^{-2}$ for some constants $a$ and $b$. That is, $v=u+ae^{r^2/4}r^{-2}+br^{-2}$ on $B(1/2)$. Now we can extend $u$ to $B(1)$ by defining $ u=v-a e^{r^2/4}r^{-2}-b r^{-2}$.
\end{proof}

Based on the previous preparation, we are able to finish the proof of Theorem~\ref{AT02} now.

\begin{proof}[Proof of Theorem~\ref{AT02}]
From Lemma~\ref{lma:MH27_1}, we can decompose $v$ as
$$
v=u-a e^{r^2/4}r^{-2}+br^{-2}
$$
on $B(1)\backslash \{0\}$. Note that we can extend $u$ to a solution of $\Delta_f u=u$ on $\mathbb R^4$ by defining 
$$
u=v+ae^{r^2/4}r^{-2}-br^{-2}
$$
outside $B(1)$. In other words, the decomposition holds on $\mathbb R^4 \backslash \{0\}$.

If $a=0$, we conclude that $u$ is a bounded solution of $\Delta_fu=u$.

Now we choose a cutoff function $\phi$ supported in $B(2r)$ which is equal to $1$ on $B(r)$. Moreover, we require that  $|\nabla \phi|\le Cr^{-1}$. 
Multiplying both sides of $\Delta_fu=u$ by $\phi^2u$ and integrating by parts, we have
\begin{align}\label{A05}
\int|\nabla (\phi u)|^2 \,d\mu \le \int |\nabla \phi|^2u^2 \, d\mu
\end{align}
where $d\mu=e^{-r^2/4}dx$.
By our choice of $\phi$, we have
\begin{align}\label{A06}
\int_{B(r)}|\nabla u|^2 \,d\mu &\le \frac{C}{r^2}\int_{B(2r)} u^2 \, d\mu 
\le \frac{C}{r^2}\int_{B(2r)} e^{-|x|^2/4} \, dx
\end{align}
since $u$ is uniformly bounded.
Then it is easy to see that the last term of the above inequality tends to $0$ as $r \to +\infty$.
Therefore,  $u$ must be a constant. As $\Delta_fu=u$, $u$ must be $0$ and hence $v=b r^{-2}$.

Now we consider the other case when $a \ne 0$.
We rewrite $v=u-a(e^{r^2/4}-1)r^{-2}+(b-a)r^{-2}$. It is obvious that $(e^{r^2/4}-1)r^{-2}$ is a smooth function on $\mathbb R^4$, so from the first case $u-a(e^{r^2/4}-1)r^{-2}=0$ 
and $v=(b-a)r^{-2}$. 
\end{proof}

\begin{thm}[\textbf{Scalar rigidity of Ricci shrinkers}]
Let $(M,g,f)$ be a four dimensional orbifold Ricci shrinker such that $f$ has a minimal point and $R=0$ at some point, then $(M,g,f)$ is a flat cone $(\mathbb R^4 / \Gamma, g_E,f_E)$.
\label{thm:MH26_2}
\end{thm}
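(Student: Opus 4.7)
The plan is to prove the theorem in two stages: first establish that the orbifold is Ricci flat, then use the resulting Hessian identity to force a cone structure around the minimum point of $f$.

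\textbf{Stage 1: Ricci flatness via the strong maximum principle.} The evolution equation (\ref{E215}) reads $\Delta_f R - R = -2|\text{Rc}|^2 \leq 0$, so setting $u = -R$ one has $(\Delta_f - 1) u \geq 0$. From the ancient Ricci flow extension (\ref{E104}), together with B.L.~Chen's maximum principle applied in each orbifold chart, one has $R \geq 0$, so $u \leq 0$, while by hypothesis $u(x_0) = 0$ somewhere. The regular part $\mathcal{R}(M)$ is a connected smooth manifold, since isolated singular points cannot disconnect a four manifold. Since the zeroth order coefficient of $\Delta_f - 1$ is nonpositive, the strong maximum principle, applied first in a smooth orbifold chart containing $x_0$ to get $u \equiv 0$ in a neighborhood, and then propagated across $\mathcal{R}(M)$, forces $R \equiv 0$. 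Substituting back into (\ref{E215}) gives $|\text{Rc}|^2 \equiv 0$.

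\textbf{Stage 2: Cone structure via the Hessian equation.} The shrinker equation now reads $\text{Hess}_f = \tfrac{1}{2} g$ and the normalization $R + |\nabla f|^2 = f$ reduces to $|\nabla f|^2 = f$. Let $p$ denote the minimum of $f$; since $\nabla f(p) = 0$, one has $f(p) = 0$. Along any unit speed geodesic $\gamma$ from $p$, the function $h(t) = f(\gamma(t))$ satisfies $h''(t) = \text{Hess}_f(\gamma',\gamma') = \tfrac{1}{2}$ with $h(0) = h'(0) = 0$, so $h(t) = t^2/4$ and $f = d(p, \cdot)^2/4$ on the regular part. Working in geodesic polar coordinates $(r,\theta)$ centered at $p$ (lifted to an orbifold chart if $p$ is itself singular), write $g = dr^2 + g_{ij}(r,\theta)\,d\theta^i d\theta^j$ and use $\nabla f = (r/2)\partial_r$ to evaluate the angular components of the Hessian equation:
\begin{align*}
\text{Hess}_f(\partial_i, \partial_j) = \tfrac{r}{4}\partial_r g_{ij} = \tfrac{1}{2} g_{ij}.
\end{align*}
Integration gives $g_{ij}(r,\theta) = r^2 h_{ij}(\theta)$, so $g = dr^2 + r^2 h$ is a metric cone over a link $(N, h)$.

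\textbf{Stage 3: Identifying the link.} Ricci flatness of the cone $dr^2 + r^2 h$ forces $(N, h)$ to be three dimensional Einstein with $\text{Rc}_h = 2h$; since Einstein equals constant sectional curvature in dimension three, $h$ has sectional curvature $1$. Properness of $f$ from Lemma~\ref{L100}, together with completeness of $M$, makes $N$ compact, so $N = S^3/\Gamma$ with round metric for some finite $\Gamma \subset O(4)$ acting freely on $S^3$. Therefore $(M, g, f) \cong (\R^4/\Gamma, g_E, |x|^2/4)$.

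The main technical obstacle is the orbifold bookkeeping: ensuring the strong maximum principle genuinely propagates across the isolated singular points, and that the polar/cone analysis at $p$ remains valid when $p$ itself may be an orbifold point. Both reduce to lifting to local orbifold charts where the lifted metric is smooth and the arguments become purely Riemannian; the globalization then follows because $\mathcal{R}(M)$ is connected and the cone ansatz is preserved under the finite group action.
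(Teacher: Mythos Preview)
Your proposal is correct and follows essentially the same route as the paper: strong maximum principle on $\Delta_f R = R - 2|\mathrm{Rc}|^2$ to force $R\equiv 0$ and hence $\mathrm{Rc}\equiv 0$, then the Hessian identity $\mathrm{Hess}_f=\tfrac12 g$ (the paper phrases this via $\mathcal{L}_{\nabla f}g=g$) to obtain the cone structure and identify the link as a spherical space form. The only point the paper treats more explicitly than you do is showing that $p$ is the \emph{unique} singular point, arguing that any other singularity $q$ would satisfy $|\nabla f|(q)=0$ in its orbifold chart, contradicting $|\nabla f|=r/2>0$ away from $p$; you may want to spell this out rather than fold it into the ``orbifold bookkeeping'' paragraph.
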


\begin{lem}\label{lma:MH27_2}
Let $(M,g,f)$ be a four dimensional orbifold Ricci shrinker such that $f$ has a minimal point. If $g$ is Ricci flat, then $(M,g,f)$ is a flat cone $(\mathbb R^4 / \Gamma, g_E,f_E)$.
\end{lem}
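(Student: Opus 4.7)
The plan is to reduce the lemma to a Tashiro-type rigidity for potentials whose Hessian is a constant multiple of the metric. Since $\text{Rc}\equiv 0$, the shrinker equation (\ref{E100}) becomes $\text{Hess}_f=\frac{1}{2}g$ (smoothly in each orbifold chart), and the normalization (\ref{E101}) gives $|\nabla f|^2=f$, so $f\ge 0$ with equality exactly at critical points. Let $p$ be the minimum point of $f$ produced by Lemma~\ref{L100}; then $\nabla f(p)=0$ and $f(p)=0$, interpreted in an orbifold cover if $p$ is a singular point.

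Next I would show $f=\frac{1}{4}d(\cdot,p)^2$ globally and extract a cone decomposition. For any unit-speed geodesic $\gamma$ starting at $p$, the function $\phi(s):=f(\gamma(s))$ satisfies $\phi''(s)=\text{Hess}_f(\gamma',\gamma')=\frac{1}{2}$ with $\phi(0)=\phi'(0)=0$, so $\phi(s)=s^2/4$; combined with completeness and Hopf-Rinow this gives $f=\frac{1}{4}r^2$ where $r:=d(\cdot,p)$. Hence $\nabla f=\frac{r}{2}\nabla r$, and for any $Y$ tangent to a distance sphere $\{r=r_0\}$, substituting this into $\text{Hess}_f(Y,\cdot)=\frac{1}{2}g(Y,\cdot)$ yields $\nabla_Y\nabla r=\frac{1}{r}Y$, i.e.\ the second fundamental form of the distance sphere is $II_r=\frac{1}{r}g_r$. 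Integrating the evolution equation $\partial_r g_r=2II_r=\frac{2}{r}g_r$ produces $g_r=r^2 g_1$, where $g_1$ is the induced metric on $\Sigma:=\{r=1\}$. Thus on $M\setminus\{p\}$,
\[
g=dr^2+r^2 g_1,
\]
so $(M,g)$ is a metric cone with apex $p$.

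To finish, use the Ricci-flatness of the cone to identify $\Sigma$. The standard warped-product formula for $dr^2+r^2 g_1$ in dimension $n=4$ gives $\text{Rc}_M(U,V)=\text{Rc}_{g_1}(U,V)-2g_1(U,V)$ for $U,V\in T\Sigma$, so $\text{Rc}_M\equiv 0$ forces $\text{Rc}_{g_1}=2g_1$ on $\Sigma^3$. Since an Einstein $3$-manifold has constant sectional curvature, here $K\equiv 1$, so $(\Sigma,g_1)$ is a spherical space form $S^3/\Gamma$ with $\Gamma\subset O(4)$ finite and acting freely on $S^3$. Therefore $(M,g,f)=(\R^4/\Gamma,g_E,f_E)$ with $f_E=|x|^2/4$, as claimed. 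The main technical care lies in the orbifold setting: when $p$ is an orbifold singular point, the geodesic computation and cone decomposition must be performed on the smooth orbifold cover around $p$; moreover, the discreteness of orbifold singularities combined with the $\R_+$-scaling structure of the cone $dr^2+r^2 g_1$ forces any singularity of $M$ to lie at the apex, keeping the global quotient picture consistent.
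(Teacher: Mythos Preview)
Your proof is correct and follows essentially the same route as the paper's: from $\text{Hess}_f=\tfrac{1}{2}g$ deduce $f=r^2/4$, establish the cone splitting $g=dr^2+r^2 g_1$, and identify the link as $S^3/\Gamma$ via Ricci-flatness. The one substantive difference is in the order and mechanism for excluding orbifold singularities away from $p$: the paper handles this \emph{before} the cone decomposition by the direct observation that at any orbifold singular point $q\neq p$ the lifted $\Gamma$-invariant potential must have $\nabla f(q)=0$, contradicting $|\nabla f|=r/2>0$, whereas your discreteness-plus-scaling remark at the end is a valid alternative but only sketched.
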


\begin{proof}
We denote a minimal point of $f$ by $p$, which may be a singular point. 
Since $g$ is Ricci flat, the soliton equation (\ref{E100}) reads as  $\text{Hess} f=\frac{g}{2}$. 
The identity (\ref{E101}) degenerates as $\left| \nabla \sqrt{f}\right|=\frac{1}{2}$, which yields that $f=\frac{r^2}{4}$ where $r$ is the distance to $p$.
Note that the geodesic convexity of the regular part of $M$ is essentially used.  More details can be found in Theorem 3.3 of  Y. Li~\cite{LY16}.

We claim that there is no other singular point than $p$.  For otherwise, we can find another singular point $q$ such that $r(q)$ is minimal. Now we connect $p$ and $q$ by a minimal geodesic 
$\gamma(t) \,, t \in [0,1]$ such that $\gamma(0)=p$ and $\gamma(1)=q$.  At any point $\gamma(a)$ for $a \in (1/2,1)$, we have
$$
|\nabla f|=\frac{r}{2} \ge C_0
$$
for some $C_0>0$. But this is impossible, since $|\nabla f|(q)=0$ if we lift it to the orbifold chart as $q$ is a singular point. 
Therefore, $p$ is the unique singular point on $M$ as we claimed. 

We proceed to show that $M$ is a metric cone, which is smooth away from $p$. Indeed, from the above arguments we have $\displaystyle  \mathcal{L}_{\nabla f}g=2\text{Hess}_f =g$, which implies that
$$
 \mathcal{L}_{\nabla r}g=\frac{2}{r}(g-dr^2).
$$
Therefore, for any vector fields $U,V$ such that $[U,\partial_r]=[V,\partial_r]=0$, we have
$$
\partial_r(g(U,V))=(\mathcal{L}_{\nabla r}g)(U,V)=\frac{2}{r}g(U,V).
$$
Now it is immediate that $g=dr^2+r^2\tilde g$ where $\tilde g$ is a smooth metric on a closed 3-manifold $\Sigma$ defined by $r=\sqrt{4f}=1$. As $g$ is Ricci flat, direct computation shows that $(\Sigma^3, \tilde g)$ is Einstein with Einstein constant $3$, 
which must be space form of constant sectional curvature $1$.  Therefore, $(\Sigma^3, \tilde{g})$ is isometric to $\left(S^3 / \Gamma, g_{S} \right)$ for some finite subgroup $\Gamma \subset O(4)$ acting freely on $S^3$.
Consequently,  $(M,g,f)$ is nothing but $(\mathbb R^4 / \Gamma, g_E,f_E)$. 
\end{proof}

Although it is not needed in our proof, we remark that the requirement of isolated singularity in Lemma~\ref{lma:MH27_2} can be replaced by much weaker conditions, e.g., the singularity is codimension 4 and the regular part is geodesic convex.
This can be proved following part of the argument in Theorem 4.18 of Chen-Wang~\cite{CW5}. 

Now we are ready to finish the proof of Theorem~\ref{thm:MH26_2}. 

\begin{proof}[Proof of Theorem~\ref{thm:MH26_2}:]
 Suppose $R(q)=0$. No matter whether $q$ is a smooth point, we can find an orbifold chart where $\Delta_{\tilde{f}} \tilde{R}=\tilde{R} + 2|\tilde{Rc}|^2$, where $\tilde{}$ means the corresponding functions lifted to the orbifold chart.  
 By strong maximum principle, we obtain $\tilde{R} \equiv 0$ in the chart and consequently $R \equiv 0$ in a small neighborhood of $q$.  Then we apply strong
 maximum principle on $\Delta_f R=R+2|Rc|^2$ and obtain that $|Rc| \equiv 0$ on the regular part of $M$.   Therefore, $(M,g,f)$ is the flat cone by Lemma~\ref{lma:MH27_2}. 
\end{proof}

\section{Ricci shrinker with an almost flat cone annulus}
\label{sec:annulus}

Our object is to study the pointed Ricci shrinkers $(M,p,g,f)$ very close to the flat cone $\R^{4}/\Gamma$ in the pointed-Gromov-Hausdorff topology. 
However, in light of Theorem~\ref{T101},  such shrinkers must be nearby $\R^{4}/\Gamma$ in the pointed-$\hat{C}^{\infty}$-Cheeger-Gromov topology(c.f. (\ref{eqn:MI01_1})).
From its definition, it is clear that the level set annulus part $\left\{ x \left| \delta \leq \sqrt{4f(x)} \leq \delta^{-1}  \right.\right\}$ must be very close to the standard annulus on the flat cone.
Motivated by this observation, we provide the following definition. 

\begin{defn}
  We say the pointed Ricci shrinker $(M,p,g,f)$ has an almost flat cone annulus $\Omega^{\delta, \delta^{-1}}$ with respect to the flat cone $\R^4/\Gamma$,  if there exists a diffeomorphism
  \begin{align*}
     \varphi:  B\left(0, \delta^{-1} \right) \backslash B(0, \delta)  \mapsto \Omega^{\delta, \delta^{-1}}=\varphi \left(  B\left(0, \delta^{-1} \right) \backslash B(0, \delta) \right) \subset M
  \end{align*}
  such that the following estimates hold: 
  \begin{itemize}
  \item[(a).] $\left| d(p, \varphi(x))-|x| \right|<0.1 \delta$ for every $x \in B\left(0, \delta^{-1} \right) \backslash B(0, \delta)$, where $|x|=d(x,0)$.   
  \item[(b).] $B(p,1)\backslash B(p,2\delta) \subset \Omega^{\delta,\delta^{-1}}$.
  \item[(c).] $\norm{\varphi^* g - g_{E}}{C^{5}(B\left(0, \delta^{-1} \right) \backslash B(0, \delta))}<\delta$. 
  \item[(d).] $\norm{\varphi^* f - \frac{|x|^2}{4}}{C^{5}(B\left(0, \delta^{-1} \right) \backslash B(0, \delta))}<0.01\delta^2$. 
  \end{itemize}
\label{dfn:MI01_1}  
\end{defn}

From Definition~\ref{dfn:MI01_1}, it is clear that $\Omega^{\delta,\delta^{-1}}$ is very close to the set $\left\{ x \left|\delta \leq \sqrt{4f(x)} \leq \delta^{-1}  \right. \right\}$.
Moreover, $\Omega^{\delta, \delta^{-1}}$ has the advantage of being diffeomorphic to $B(0,\delta^{-1}) \backslash B(0, \delta)$, a standard annulus in
the flat cone $\R^{4} /\Gamma$.  Therefore, we can do analysis on $B(0,\delta^{-1}) \backslash B(0, \delta)$, with respect to the pull back metric $\varphi^*(g)$, which is
very close to the flat metric.  One can see Figure~\ref{fig:coneannulus} for intuition. 
Note the function $\left|\varphi^{-1} \right|$ is very close to $f$. In particular, we have 
\begin{align*}
    \left\langle \nabla \left|\varphi^{-1} \right|, \nabla f \right \rangle> 0.9 |\nabla f|^2>0, \quad \forall \; x \in \Omega^{\delta, \delta^{-1}}. 
\end{align*}
Therefore $M \backslash \Omega^{\delta, \delta^{-1}}$ contains two parts which are disconnected to each other.
One of them has large value of $f$, say $f>0.1 \delta^{-2}$. This part is called the outer part. 
The other one is the part  with small value of $f$, say $f<10 \delta^2$. We call this part as inner part. 
For simplicity of notation, for each $r_0 \in (\delta, \delta^{-1})$, we denote the union of $\Omega^{r_0, \delta^{-1}}$ and the outer part by $\Omega^{r_0+}$. 
In other words, we have
\begin{align}
   \Omega^{r_0+} \triangleq \Omega^{r_0,\delta^{-1}} \cup \{\textrm{outer part}\}=\varphi \left\{ B(0, \delta^{-1}) \backslash B(0, r_0) \right\} \cup \{\textrm{outer part}\}.   \label{eqn:MI03_1}
\end{align}
Note that we use $\Omega^{r_1,r_2}$ to denote $\varphi \left\{ B(0, r_2) \backslash B(0, r_1) \right\}$  whenever $\delta \leq r_1 <r_2 \leq \delta^{-1}$.

 \begin{figure}
 \begin{center}
 \psfrag{A}[c][c]{\color{red}$B\left(0,\delta^{-1} \right) \backslash B(0,\delta)$}
 \psfrag{B}[c][c]{\color{red}$\Omega^{\delta,\delta^{-1}}$}
 \psfrag{C}[c][c]{\color{red} $\varphi$}
 \psfrag{D}[c][c]{$M$}
 \psfrag{E}[c][c]{$\R^4/\Gamma$}
 \psfrag{F}[c][c]{$p$}
 \psfrag{G}[c][c]{$0$}
 \includegraphics[width=0.5 \columnwidth]{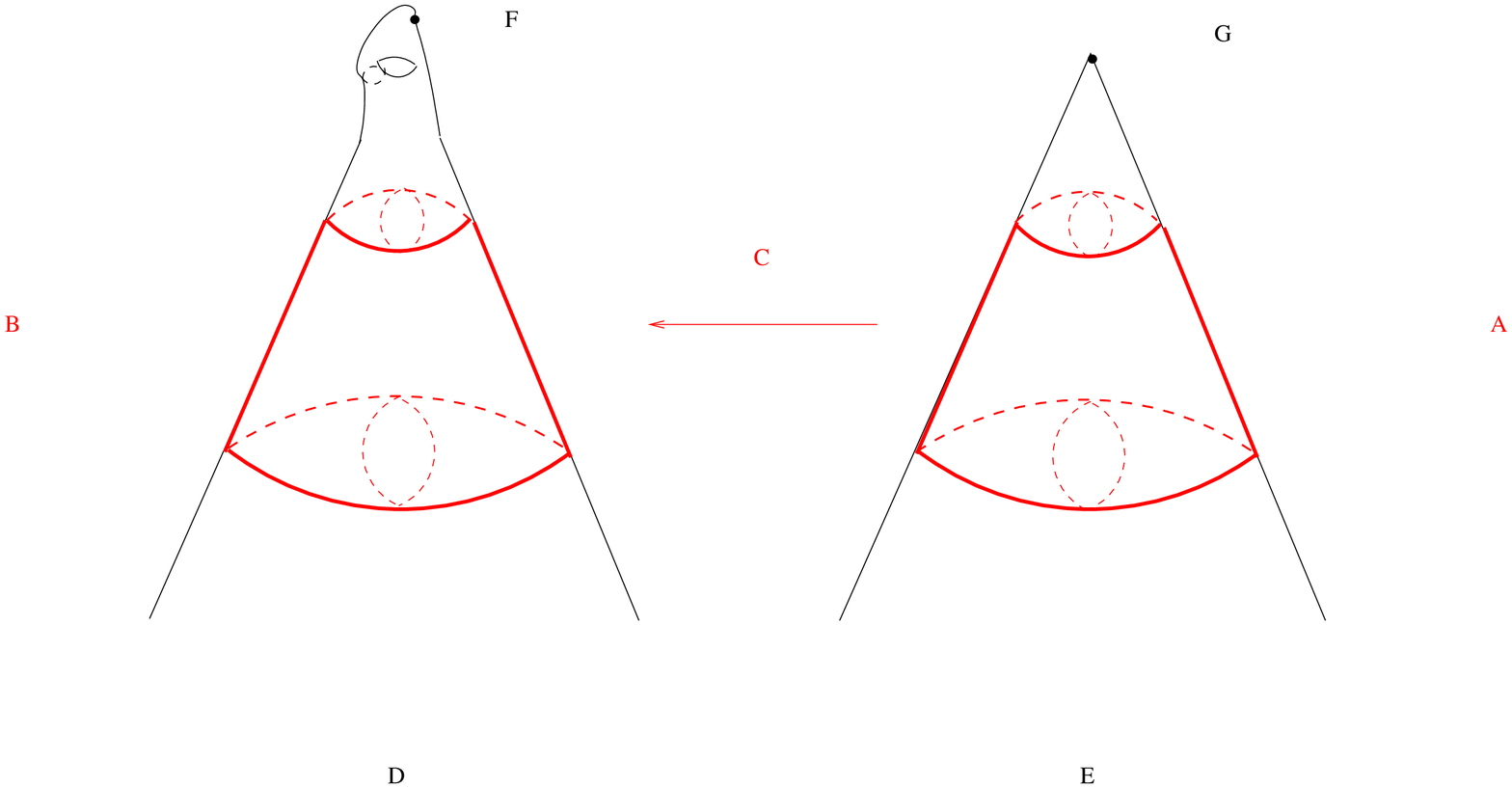}
 \caption{Almost flat cone annulus}
 \label{fig:coneannulus}
 \end{center}
 \end{figure}

A Ricci shrinker with an almost flat cone annulus has many special properties.  For example, $|Rm| \leq \frac{1}{4}$ on  $\Omega^{\delta, \delta^{-1}}$. 
Consequently,  (\ref{E216a}) becomes
\begin{align}
  \Delta_f U \geq 2\left(Z+U^2\right) \geq 0, \quad \textrm{on} \;   \Omega^{\delta, \delta^{-1}}.  
\label{eqn:MH21_5}
\end{align}
In fact, the Ricci shrinker equation (\ref{E100}) is very rigid.  Much more global properties of $(M,g,f)$ can be shown.

\begin{lem}
 Suppose $(M, g, f) \in \mathcal{M}^{*}(A)$ has an almost flat cone annulus $\Omega^{\delta, \delta^{-1}}$.  
 Then $M$ is noncompact.   Moreover, $\Omega^{1+}$ is diffeomorphic to $S^3/\Gamma \times [1,\infty)$ and there  is a uniform $C=C(A)$ such that
    \begin{align}
     |Rm|(y) \leq C d^{-2}(y,p), \quad \forall \; y \in M \backslash B(p,1). 
     \label{eqn:MH20_5}
     \end{align}
 In other words, the curvature is uniformly quadratically decaying at infinity.    
\label{lma:GH20_1}
\end{lem}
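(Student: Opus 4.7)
The plan is to prove the three assertions in order, using the shrinker identity $R + |\nabla f|^2 = f$ and the gradient flow of $f$ for the topological statements, and then applying Perelman's pseudolocality (Theorem~\ref{thm:MI02_1}) to the associated self-similar Ricci flow for the uniform curvature decay.

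For noncompactness, I would use that on $\Sigma := \varphi(\partial B(0,\delta^{-1}))$ we have $f \approx \delta^{-2}/4$ with $\nabla f$ pointing outward, as already noted in the text preceding the lemma. Writing $K := \sup_M R$, which is finite for each given $M$ by the definition of $\mathcal{M}^*(A,H)$, the identity $|\nabla f|^2 = f - R$ ensures $|\nabla f| > 0$ on $\{f > K\}$. The integral curves of $X := \nabla f / |\nabla f|^2$ increase $f$ at unit rate, and since $|X| \leq (f - K)^{-1/2}$ they have finite length on any finite $f$-interval; completeness of $(M,g)$ thus makes them extend for all forward time, forcing $\sup_M f = +\infty$ and hence noncompactness of $M$. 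For the product structure $\Omega^{1+} \cong S^3/\Gamma \times [1,\infty)$, the annular piece $\Omega^{1,\delta^{-1}}$ is diffeomorphic to $S^3/\Gamma \times [1,\delta^{-1}]$ by $\varphi$; for the outer region I would pick $c_0$ slightly larger than $K$ and close to $\delta^{-2}/4$, use the $C^5$-closeness of $\varphi^* f$ to $|x|^2/4$ together with the implicit function theorem to identify the level set $L_{c_0} := \{f = c_0\}$ with a standard $S^3/\Gamma$, and then use the flow of $X$ to diffeomorph $\{f \geq c_0\}$ with $L_{c_0} \times [0,\infty)$.

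For the uniform curvature decay $|Rm|(y) \leq C(A) d(y,p)^{-2}$, I would apply Perelman's pseudolocality to the ancient self-similar Ricci flow $g(t) = (1-t)(\phi^t)^* g$ generated by the shrinker. Given $y$ with $D := d(p,y) \geq 1$, the backward trajectory $\psi^{-s}(y)$ under $\nabla f$ obeys $\frac{d}{ds} f = -(f - R) \approx -f$ for large $f$, so $f(\psi^{-s}(y))$ decays exponentially and the trajectory reaches the almost-flat annulus $\Omega$ at some time $s_* \sim \log D$. Choosing the Ricci-flow time $t$ with $1 - t \sim e^{2 s_*} \sim D^2$, the point $y$ in the rescaled metric $g(t)$ sits at bounded distance from a base point $x_0 \in \Omega$; around $x_0$ the original curvature bound $|Rm|_g \leq 1/4$ combined with the near-Euclidean injectivity radius (from the $C^5$-closeness and uniform non-collapsing via Perelman's $\mu$-entropy) supplies the pseudolocality hypotheses at a uniform scale $r_0$. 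Pseudolocality and the self-similar relation $|Rm|_{g(t)}(y) = (1-t)\, |Rm|_g(\phi^t(y))$ then give $|Rm|_g(y) \leq C(A)/D^2$ after tracing the diffeomorphism $\phi^t$ back.

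The main technical obstacle lies in this last step: calibrating the Ricci-flow time $t$, the pseudolocality scale $r_0$, and the base point $x_0$ so that the pseudolocality hypotheses hold \emph{uniformly} over $\mathcal{M}^*(A,H)$ and the conclusion lands precisely on $y$ in the rescaled metric. This requires a careful ODE analysis of $f(\psi^{-s}(y))$, using Theorem~\ref{thm:MH20_4} to ensure the trajectory avoids critical points of $f$ throughout, together with a comparison between $d_{g(t)}$ and $d_g$ mediated by $\phi^t$ so that the propagated curvature bound lands on the correct point and with the correct scaling factor $(1-t)^{-1} \sim D^{-2}$.
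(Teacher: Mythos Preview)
Your approach to noncompactness and the product structure contains a genuine gap. You argue via the flow of $X=\nabla f/|\nabla f|^2$ on $\{f>K\}$, where $K=\sup_M R$, but for the argument to start you need a point with $f>K$; in particular you are implicitly assuming the outer boundary $\Sigma$ of the annulus lies in $\{f>K\}$, i.e.\ $K<\delta^{-2}/4$. Nothing in the hypotheses guarantees this: $K$ is finite for each individual $M\in\mathcal M^*(A,H)$ but is not controlled by $A$, $H$, or $\delta$. Indeed, if $M$ were compact then at the maximum point $q$ of $f$ one has $f(q)=R(q)\le K$, so $\{f>K\}=\emptyset$ and your flow argument is vacuous---precisely in the case you must exclude. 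The same gap propagates to your curvature-decay argument: you cannot be sure the backward trajectory $\psi^{-s}(y)$ reaches the annulus without first stalling at a critical point of $f$ in the outer region, and your appeal to Theorem~\ref{thm:MH20_4} does not rule this out.

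The paper closes this gap by reversing the order of operations. It \emph{first} applies pseudolocality at time $0$ on $\Omega^{r_0,2r_0}$ (with $r_0$ large but depending only on $A$), obtaining $|Rm|_{g(t)}(q)\le 3$ for all $q\in\Omega^{r_0,2r_0}$ and $t\in(0,1)$; by self-similarity this reads $|Rm|_g(\phi^t(q))\le 3(1-t)$, hence $R(\phi^t(q))\le 36<0.01\,r_0^2$ along the forward flow. Since $f>0.1\,r_0^2$ on $\Omega^{r_0+}$, one gets $|\nabla f|^2=f-R>0.09\,r_0^2$ along every forward trajectory, so the flow never stalls, $f(\phi^t(q))\to\infty$ as $t\to1^-$, and $(q,t)\mapsto\phi^t(q)$ is a diffeomorphism $\partial\Omega^{2r_0}\times[0,1)\to\Omega^{2r_0+}$. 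Thus pseudolocality does double duty: it rules out critical points of $f$ in the outer region (replacing your uncontrolled constant $K$) and simultaneously supplies the curvature bound, which together with the ODE estimate $r(\phi^t(q))\le C_1 r_0/\sqrt{1-t}$ yields the quadratic decay directly. (Minor note: your self-similarity formula should read $|Rm|_{g(t)}(y)=(1-t)^{-1}|Rm|_g(\phi^t(y))$, not $(1-t)$.)
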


\begin{proof}

Running Ricci flow from the Ricci shrinker $(M, g, f)$, we obtain a family of smooth metrics $g(t)$ satisfying (\ref{E104}).  
Up to a rescaling argument, one can apply Perelman's pseudo-locality theorem, i.e., Theorem~\ref{thm:MI02_1},  to obtain
\begin{align}
|\text{Rm}|(q,t)\le 3,   \quad \forall \; q \in \Omega^{r_0, 2r_0},  \; t \in (0,1),    \label{E203}
\end{align}
where we choose $1<r_0<\delta^{-1}$ large enough so that Theorem~\ref{thm:MI02_1} can be applied. 
However,  by considering the Ricci flow solution of the Ricci shrinker \eqref{E104}, the above inequality means that
\begin{align}
|\text{Rm}|(\phi^t(q),0)\le 3(1-t),  \quad \forall \; q \in \Omega^{r_0, 2r_0},  \; t \in (0,1).
\label{E206}
\end{align}
In particular, for each $q \in \Omega^{r_0, 2r_0}$ and $t \in (0,1)$, we have the scalar curvature bound
\begin{align}
   R(\phi^t(q),0) \leq 3n(n-1)(1-t) \leq 3n(n-1)<0.01 r_0^2.   \label{eqn:MI02_0}
\end{align}
However, we have $f>0.1 r_0^2$ on $\Omega^{r_0+}$.  In light of (\ref{E101}), we know that 
\begin{align}
 |\nabla f|^2(\phi^t(q))>0.09r_0^2,  
\label{eqn:MI02_3}
\end{align}
whenever $\phi^{t}(q) \in \Omega^{r_0+}$. 
Therefore, along the flow line of $\phi^t(q)$, there is no critical point of $f$ and $f(\phi^t(q))$ is an increasing function of $f$ since
\begin{align}
    \frac{d}{dt}f(\phi^t(q))=\left\langle \nabla f, \frac{d}{dt} \phi^t(q) \right\rangle=\frac{|\nabla f|^2(\phi^t(q))}{1-t}>0.   \label{eqn:MI02_1}
\end{align}
This forces that $\phi^t(q)$ will keep stay in $\Omega^{r_0+}$ whenever
it enters $\Omega^{r_0+}$ at some time $t \geq 0$.  Moreover, the flow line $\phi^t(q), \, t\in [0,1)$ has no stationary point.  Plugging (\ref{E101}) into (\ref{eqn:MI02_1}),  using (\ref{eqn:MI02_0}), we obtain
\begin{align}
    \frac{d}{dt}f(\phi^t(q))=\frac{f-R}{1-t}>\frac{0.9f}{1-t}. 
\label{eqn:MI02_2}
\end{align}
In particular, we have $\displaystyle \lim_{t \to 1^{-}} f(\phi^t(q))=\infty$ and $\displaystyle \sup_{M} f=\infty$. 
Consequently, $M$ is a noncompact manifold since $f$ is a smooth function. 

By (\ref{eqn:MI02_3}), it is clear that $\phi$ induces a diffeomorphism from $\partial \Omega_{2r_0} \times [0, 1)$ to $\Omega_{2r_0+}$ by
\begin{align*}
  \phi:  \partial \Omega_{2r_0} \times [0, 1) \mapsto \Omega_{2r_0+}, \quad (q,t) \mapsto \phi^t(q). 
\end{align*}
Since $\partial \Omega^{2r_0}$ is diffeomorphic to $S^3 /\Gamma$, $[0,1)$ is diffeomorphic to $[2r_0, \infty)$, we see that $\Omega^{2r_0+}$ is diffeomorphic to $S^3 /\Gamma \times [2r_0, \infty)$.
Concatenating this diffeomorphism with the natural diffeomorphism between $\Omega^{1,2r_0}$ and $S^3 /\Gamma \times [1, 2r_0]$, we obtain a diffeomorphism between $\Omega^{1+}$ and $S^3 /\Gamma \times [1, \infty)$.

We continue to show (\ref{eqn:MH20_5}). 
Let $r$ be the distance function to $p$.
Fixing a point $q \in \Omega^{r_0, 2r_0}$,  we have
\begin{align}
\frac{d}{dt}r(\phi^t(q)) &=\left\langle \nabla r,\frac{d}{dt}\phi^{t}(q)  \right\rangle=\frac{1}{1-t}\langle \nabla r, \nabla f \rangle \notag 
\le \frac{1}{1-t} |\nabla f| \notag \le \frac{\sqrt{f}}{1-t}, 
\label{E210}
\end{align}
where the last inequality follows from \eqref{E101}. By \eqref{E106}, the inequality above becomes
\begin{equation}\label{E211}
\frac{d}{dt}r(\phi^t(q)) \le \frac{r(\phi^t(q))+\sqrt{8}}{2(1-t)}.
\end{equation}
Integrating \eqref{E211} yields that
\begin{equation}\label{E212}
r(\phi^t(q)) \le \frac{C_1r_0}{\sqrt{1-t}}
\end{equation}
for some positive constant $C_1=C_1(A)$ independent of $q$.  
Therefore, (\ref{eqn:MH20_5}) follows from the combination of  \eqref{E206} and \eqref{E212}. 
\end{proof}

\begin{lem}
 Suppose $(M, g, f) \in \mathcal{M}^{*}(A)$ has an almost flat cone annulus $\Omega^{\delta, \delta^{-1}}$.   
 Then we have the following properties. 
 \begin{itemize}
 \item[(a).] With respect to the measure $d\mu=e^{-f}dv$, we have
   \begin{align}
    \int_{M} |Rm|^2 d\mu < C'
 \label{eqn:GH20_1}   
 \end{align}
 for some $C'=C'(A)$. 
 \item[(b).] At the infinity end of $M$, we have
\begin{align}
  \lim_{y \to \infty} \frac{|Rm|^2}{R}(y)=0.
  \label{eqn:MH20_6}
\end{align}
 \end{itemize}
\label{lma:MI02_3}
\end{lem}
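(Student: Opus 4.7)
Both parts hinge on the quadratic curvature decay established in Lemma~\ref{lma:GH20_1}.

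For part (a), I split $M = B(p, R_0) \sqcup (M \setminus B(p, R_0))$ for a fixed large $R_0$ (say $R_0 = 10n$). On the exterior, I combine Lemma~\ref{lma:GH20_1} ($|Rm| \leq C/d(\cdot, p)^2$), Lemma~\ref{L100} (Gaussian quadratic growth $f \geq (r-5n)_+^2/4$), and Lemma~\ref{L101} (polynomial volume growth $\mathrm{Vol}\,B(p,r) \leq Cr^4$). A coarea decomposition into unit-width annuli $B(p,k+1) \setminus B(p,k)$ yields
\[
\int_{M \setminus B(p, R_0)} |Rm|^2 e^{-f}\,dv \;\leq\; C^2 \sum_{k \geq R_0} k^{-4}\cdot k^3\cdot e^{-(k-5n)^2/4},
\]
a convergent series bounded by a constant depending only on $A$, since the Gaussian factor dominates. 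For the interior $B(p, R_0)$, I would invoke the uniform $L^2$ curvature bound of Haslhofer--M\"uller (the technical backbone underlying Theorem~\ref{T101}), which gives $\int_M |Rm|^2 e^{-f}\,dv \leq C(A)$ for every four-dimensional shrinker with $\mu(g,1) \geq -A$. The key intermediate identities, obtained by testing $\Delta_f R = R - 2|Rc|^2$ against $e^{-f}\,dv$, are $\int|Rc|^2 e^{-f}\,dv = \tfrac{1}{2}\int R\,e^{-f}\,dv$ and the moment formula $\int f\, e^{-f}\,dv = \tfrac{n}{2}\int e^{-f}\,dv$, both of which are controlled by the entropy via (\ref{eqn:MI03_4}). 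The passage from $|Rc|^2$ to $|Rm|^2$ requires the weighted Chern--Gauss--Bonnet-type identity for Ricci shrinkers.

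For part (b), I use the simple factorization
\[
\frac{|Rm|^2}{R}(y) \;=\; |Rm|(y)\cdot\frac{|Rm|(y)}{R(y)}.
\]
By Theorem~\ref{thm:MH20_4} of Munteanu--Wang, the ratio $|Rm|/R$ is globally bounded on $M$; the bound may depend on the individual shrinker, but this is acceptable since (b) is a pointwise asymptotic statement requiring no uniformity in $A$. By Lemma~\ref{lma:GH20_1}, $|Rm|(y) \leq C\,d(y,p)^{-2} \to 0$ as $y \to \infty$. Multiplying these two factors immediately yields $|Rm|^2/R \to 0$ at infinity.

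The main obstacle is the uniform interior $L^2$ curvature bound in (a): the pointwise Munteanu--Wang estimate is not uniform across the moduli, so one has to rely on the Haslhofer--M\"uller integral bound. Justifying the weighted integration by parts on the noncompact manifold requires care in controlling boundary terms at infinity, but these vanish thanks to the exterior Gaussian-weighted decay established above, so everything can be pushed through with dimensional constants and $A$ only.
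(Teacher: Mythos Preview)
Your argument is correct and essentially the paper's. Part (b) matches the paper's one-line proof verbatim (Munteanu--Wang's ratio bound (\ref{eqn:MH20_4}) times the quadratic decay (\ref{eqn:MH20_5})); for part (a) the paper is even terser, citing only (\ref{E106}) and (\ref{E106a}) as shorthand for the Haslhofer--M\"uller weighted $L^2$ curvature bound underlying Theorem~\ref{T101}, so your explicit exterior annulus decomposition, while correct, becomes redundant once you invoke their global estimate for the interior.
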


\begin{proof}
 The inequality (\ref{eqn:GH20_1}) follows from the combination of the uniform lower bound of $f$ in (\ref{E106}) and the volume ratio upper bound in (\ref{E106a}). 
 
 The equation (\ref{eqn:MH20_6}) follows from the combination of Munteanu-Wang's inequality (\ref{eqn:MH20_4}) and  the quadratic curvature decay estimate (\ref{eqn:MH20_5}). 
\end{proof}

\begin{lem}
   There is a uniform $C=C(A)$ such that
   \begin{align}
       \int_{M} \left(Z+U^2 \right) d\mu \leq C.   \label{eqn:MH21_6}
   \end{align}
\label{lma:GH20_2}   
\end{lem}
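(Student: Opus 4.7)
The plan is to integrate the differential inequality (\ref{E216a}) against the weighted measure $d\mu = e^{-f}\,dv$, use the divergence-form identity $e^{-f}\Delta_f U=\operatorname{div}(e^{-f}\nabla U)$ to turn the Laplacian term into a boundary piece, and then kill that piece by exploiting the Gaussian decay of $e^{-f}$ together with the polynomial decay of $U$ at infinity. Concretely, I would rewrite (\ref{E216a}) as
\begin{align*}
2(Z+U^2)\leq \Delta_f U+(4|Rm|-1)U,
\end{align*}
multiply by a smooth cutoff $\eta_r$ with $\eta_r\equiv 1$ on $B(p,r)$, $\operatorname{supp}\eta_r\subset B(p,2r)$ and $|\nabla\eta_r|\le C/r$, and integrate against $d\mu$. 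Using the divergence identity the Laplacian term becomes
\begin{align*}
\int_M\eta_r\,\Delta_f U\,d\mu=-\int_M\langle\nabla\eta_r,\nabla U\rangle\,d\mu.
\end{align*}

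The next step is to let $r\to\infty$ and show that this error vanishes. On the shell $B(p,2r)\setminus B(p,r)$, Lemma \ref{L100} gives $f\gtrsim r^2$, so the measure weight satisfies $e^{-f}\le e^{-cr^2}$ for a uniform $c>0$; the volume of the shell grows only polynomially by Lemma \ref{L101}. On the other hand, by Theorem \ref{thm:MH20_4} and Lemma \ref{lma:GH20_1} we have the pointwise bound $|Rm|+|\nabla Rm|\le LR\le C|Rm|\le Cr^{-2}$ at infinity, from which a direct computation gives that $|\nabla U|$ decays at least polynomially in $r$. Together with $|\nabla\eta_r|\le C/r$ and the Gaussian weight, the error integral tends to $0$. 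By monotone convergence in $r$ (note $Z+U^2\ge 0$) we are left with
\begin{align*}
2\int_M (Z+U^2)\,d\mu\leq \int_M (4|Rm|-1)U\,d\mu\leq 4\int_M|Rm|\,U\,d\mu.
\end{align*}

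Finally, I would use Theorem \ref{thm:MH20_4} once more to reduce everything to a known quantity. Since $|Rc|\le C|Rm|$ and $|Rm|\le LR$, we get the pointwise bound
\begin{align*}
|Rm|\,U=|Rm|\,\frac{|Rc|^2}{R}\leq\frac{C|Rm|^3}{R}\leq CL\,|Rm|^2.
\end{align*}
Part (a) of Lemma \ref{lma:MI02_3} then gives $\int_M|Rm|^2\,d\mu\le C'(A)$, and combining these two estimates yields (\ref{eqn:MH21_6}) with $C=2CLC'(A)$.

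The main obstacle is the cutoff-disposal step: the integration by parts is only formal on the noncompact manifold $M$ and becomes rigorous only because the weighted measure decays faster than the natural growth of $U$ and $\nabla U$. This is where Munteanu-Wang's estimate (\ref{eqn:MH20_4}) is essential, since the quadratic curvature decay of Lemma \ref{lma:GH20_1} used to bound $|\nabla U|$ depends on it; without such a bound, $U$ could a priori grow faster than the Gaussian factor $e^{-f}$ decays on the annulus of distances $[r,2r]$, and the boundary term would not vanish.
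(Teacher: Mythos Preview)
Your cutoff-and-integrate strategy is essentially the same as the paper's (the paper integrates over balls and passes a boundary term to zero, which amounts to the same thing), and the boundary/error term does indeed vanish for the reasons you give. The gap is in your final step, where you lose the \emph{uniformity} of the constant.

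You bound $|Rm|\,U \le CL\,|Rm|^2$ by invoking $|Rm|\le LR$ from Theorem~\ref{thm:MH20_4}. But look at the statement of that theorem: the constant $L$ there depends on the individual manifold $M$, not just on $A$. Consequently your final constant $2CLC'(A)$ depends on $M$, and you have not proved~(\ref{eqn:MH21_6}) with $C=C(A)$. In the paper, the Munteanu--Wang constant $L$ is used only to show the boundary/error term tends to zero (where it disappears in the limit and does not contaminate the final bound); it is never allowed into the surviving inequality.

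The fix is elementary and does not require $L$ at all: after arriving at
\[
2\int_M (Z+U^2)\,d\mu \le 4\int_M |Rm|\,U\,d\mu,
\]
use $4|Rm|\,U \le U^2 + 4|Rm|^2$ (that is, $(U-2|Rm|)^2\ge 0$) and absorb the $U^2$ term on the left. This leaves
\[
\int_M (Z+U^2)\,d\mu \le 4\int_M |Rm|^2\,d\mu \le 4C'(A),
\]
which is exactly the uniform bound claimed.
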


\begin{proof}
  
  Choose $\lambda$ very large satisfying $|\partial B(p, \lambda)| \leq C\lambda^3$ for some $C$ independent of $M$. 
  Since both $U$ and $Z$ are nonnegative,  integrating on $B(p,\lambda)$ implies that
  \begin{align*}
    \int_{B(p, \lambda)} \left(\Delta_{f} U \right) e^{-f}dv \geq \int_{B(p,\lambda)} \left( 2\left(Z+U^2 \right)-4|Rm|U \right) e^{-f}dv
    \geq \int_{B(p,\lambda)}\left( Z+U^2-4|Rm|^2 \right) e^{-f}dv.
  \end{align*}
  It follows that
  \begin{align}
    \int_{B(p,\lambda)}\left( Z+U^2 \right) e^{-f} dv &\leq 4 \int_{B(p,\lambda)} |Rm|^2 e^{-f}dv + \int_{B(p, \lambda)} \left(\Delta_{f} U \right) e^{-f}dv \notag\\
    &\leq 4 \int_{B(p,\lambda)} |Rm|^2 e^{-f}dv +\int_{\partial B(p, \lambda)} |\nabla U| e^{-f} d\sigma \notag\\
    &\leq C + \int_{\partial B(p, \lambda)} |\nabla U| e^{-f} d\sigma.  \label{eqn:MH20_3} 
  \end{align}
  Recall that $U=\frac{|Rc|^2}{R}$. Therefore, we have
  \begin{align*}
    |\nabla U|=\left|\nabla \frac{|Rc|^2}{R} \right|=\left| \frac{R|Rc| \nabla |Rc|-|Rc|^2 \nabla R}{R^2}\right| \leq C \frac{|Rc|^2|\nabla Rm|}{R^2}=CU \frac{|\nabla Rm|}{R} \leq CLU, 
  \end{align*}
  where we used Theorem~\ref{thm:MH20_4} in the last step.  
  Since $U=\frac{|Rc|^2}{R} \leq C\frac{|Rm|^2}{R}$, by (\ref{eqn:MH20_6}), we know $U$ is a bounded function on $M$.   Therefore, we have
  \begin{align*}
     \int_{\partial B(p, \lambda)} |\nabla U| e^{-f} d\sigma \leq L e^{-\frac{\lambda^2}{8}} \lambda^3
  \end{align*}
  for some constant $L$ depending on $M$ but independent of $\lambda$. 
  In light of (\ref{E106a}), we can choose a sequence of $\lambda_j \to \infty$ such that $|\partial B(p, \lambda_j)| \leq C \lambda_j^3$.  
  Plugging the above inequality into (\ref{eqn:MH20_3})  and letting $\lambda_j \to \infty$,  we obtain (\ref{eqn:MH21_6}).   
\end{proof}

\begin{lem}
  There exists a uniform constant $C=C(A)$ such that
  \begin{align}
    &\sup_{\Omega^{0.5r,r}} U \leq Cr^{-2} \left\{ \int_{\Omega^{0.25r, 2r}} U^2 d\mu \right\}^{\frac{1}{2}},     \label{eqn:MH20_11}\\
    &\int_{\Omega^{r+}} |\nabla U|^2 d\mu \leq C r^{-2} \int_{\Omega^{0.5r,r}} U^2 d\mu, \label{eqn:MH20_12}
  \end{align}
  for every $r \in (2\delta, 1)$. 
  \label{lma:MH20_4}
\end{lem}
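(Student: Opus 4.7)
The plan is to derive both inequalities from the single fact that $U$ is $f$-subharmonic on $\Omega^{r/2+}$, combined with the near-Euclidean geometry on the almost flat cone annulus. First I verify that $\Delta_f U \ge 0$ holds on all of $\Omega^{r/2+}$: on the annulus portion $\Omega^{r/2,\delta^{-1}}\subset\Omega^{\delta,\delta^{-1}}$ this is exactly \eqref{eqn:MH21_5}, while on the outer part $\{f>0.1\delta^{-2}\}$ one has $d(\cdot,p)\gtrsim\delta^{-1}$, so the quadratic curvature decay \eqref{eqn:MH20_5} yields $|Rm|\ll 1/4$ and \eqref{E216a} again gives $\Delta_f U \ge 0$.

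For the Caccioppoli bound \eqref{eqn:MH20_12}, I pick a cutoff $\eta$ which on $\Omega^{\delta,\delta^{-1}}$ depends only on $|\varphi^{-1}(\cdot)|$, equals $1$ for $|\varphi^{-1}|\ge r$ and $0$ for $|\varphi^{-1}|\le r/2$, is extended by $1$ on the outer part and by $0$ on the inner part, and satisfies $|\nabla\eta|\le C/r$ with $\mathrm{supp}(\nabla\eta)\subset\Omega^{r/2,r}$. Testing $\Delta_f U\ge 0$ against $\eta^2 U$ and using that $\Delta_f$ is symmetric with respect to $d\mu$, integration by parts followed by the standard Cauchy-Schwarz absorption yields $\int\eta^2|\nabla U|^2\,d\mu \le 4\int U^2|\nabla\eta|^2\,d\mu \le (C/r^2)\int_{\Omega^{r/2,r}} U^2\,d\mu$. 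Since $\Omega^{r+}$ is noncompact, I first localize by a secondary cutoff vanishing outside $B(p,2R)$; the boundary term vanishes as $R\to\infty$ thanks to the global finiteness $\int_M U^2\,d\mu<\infty$ supplied by \eqref{eqn:MH21_6}.

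For the mean-value bound \eqref{eqn:MH20_11}, fix an arbitrary $q\in\Omega^{r/2,r}$ and work in the Euclidean-coordinate ball $B(\varphi^{-1}(q),r/8)$, which sits inside $B(0,2r)\setminus B(0,r/4)$. By Definition~\ref{dfn:MI01_1}(c)-(d) the pulled-back metric $\varphi^*g$ is $\delta$-close to $g_E$ in $C^5$ on $\Omega^{r/4,2r}$, and $e^{-f}$ is uniformly bounded above and below by positive constants there since $f\le 1$ when $r\le 1$. Rescaling coordinates by $r^{-1}$ makes the geometry on unit-sized balls uniformly close to the flat one, so the Sobolev and Poincar\'e constants are absolute. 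Running the classical Moser iteration on the nonnegative subsolution $U$ of $\Delta_f U\ge 0$ in dimension four then produces $\sup_{B(q,r/8)} U\le C r^{-2}\left(\int_{B(q,r/4)} U^2\,d\mu\right)^{1/2}$; the exponent $r^{-2}$ is forced by $n/2=2$, and taking the supremum in $q$ delivers \eqref{eqn:MH20_11}.

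The principal technical obstacle is making the Sobolev/Poincar\'e (and hence Moser iteration) constants uniform in both the scale $r\in(2\delta,1)$ and the specific shrinker in $\mathcal{M}^*(A,H)$; this is precisely the purpose of the $C^5$-closeness clause in Definition~\ref{dfn:MI01_1}, combined with scale invariance after rescaling by $r^{-1}$. A minor secondary issue is the noncompactness of $\Omega^{r+}$ in the Caccioppoli step, but the $L^2$-bound \eqref{eqn:MH21_6} makes the exhaustion argument routine.
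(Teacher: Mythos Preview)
Your proposal is correct and follows essentially the same route as the paper: the paper also derives \eqref{eqn:MH20_11} by noting $|Rm|\le \tfrac14$ on $\Omega^{0.25r,2r}$ so that $\Delta_f U\ge 2U^2\ge 0$ and then invoking standard Moser iteration, and derives \eqref{eqn:MH20_12} by multiplying $\Delta_f U\ge 0$ on $\Omega^{0.5r+}$ against $\psi^2 U$ for a cutoff $\psi$ supported on $\Omega^{0.5r+}$ with $\psi\equiv 1$ on $\Omega^{r+}$ and $|\nabla\psi|\le 10r^{-1}$. Your account is in fact more careful on two points the paper leaves implicit: the exhaustion argument (using \eqref{eqn:MH21_6}) to justify the integration by parts on the noncompact set $\Omega^{0.5r+}$, and the rescaling that makes the Moser iteration constants uniform in $r$ and in the shrinker.
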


\begin{proof}

 We first prove (\ref{eqn:MH20_11}).   Note that $|Rm| \leq \frac{1}{4}$ on $\Omega^{0.25r,2r}$. 
 Then we have
  \begin{align}
    \Delta_{f} U \geq U+2U^2-4|Rm|U \geq 2U^2 \geq 0.  \label{eqn:MH20_10}
  \end{align}
 Then  (\ref{eqn:MH20_11}) follows from standard Moser iteration.

 We then focus on the proof of (\ref{eqn:MH20_12}).  
 Let $\psi$ be a cutoff function supported on $\Omega^{0.5r+}$ and equals $1$ on $\Omega^{r+}$.   Moreover, $|\nabla \psi| \leq 10r^{-1}$. 
  Similar to (\ref{eqn:MH20_10}), it is clear that $\Delta_{f} U \geq 0$ on $\Omega^{r+}$.
  It follows from integration by parts that
  \begin{align*}
    -\int_{M} \psi^2 |\nabla U|^2 d\mu - \int_{M} 2\psi U \langle \nabla \psi, \nabla U\rangle d\mu=\int_{M} (\psi^2 U) \Delta_{f} U d\mu \geq 0, 
  \end{align*}
  which implies that
  \begin{align*}
    \int_{M} \psi^2 |\nabla U|^2 d\mu \leq - \int_{M} 2\psi U \langle \nabla \psi, \nabla U\rangle d\mu \leq \frac{1}{2} \int_{M} \psi^2 |\nabla U|^2 d\mu + 2\int_{M} U^2|\nabla \psi|^2d\mu. 
  \end{align*}
  Note that $|\nabla \psi| \leq Cr^{-1}$.  Hence we arrive
  \begin{align*}
    \int_{\Omega^{r+}} |\nabla U|^2 d\mu \leq \int_{M} \psi^2 |\nabla U|^2 d\mu \leq Cr^{-2} \int_{supp(\nabla \psi)} U^2 d\mu \leq Cr^{-2} \int_{\Omega^{0.5r,r}} U^2 d\mu.
  \end{align*}
  The proof of (\ref{eqn:MH20_12}) is complete. 

\end{proof}

\begin{prop}[\textbf{Estimate of $U$ and $Z$}]
 For each $\rho \in \left(16 \delta, 1 \right)$, we have the estimates
 \begin{align}
      &\int_{\Omega^{\rho+}} \left(Z+U^2\right) d\mu \leq C  \left\{ \log \frac{\rho}{\delta}\right\}^{-\frac{1}{2}}, \label{eqn:MH21_7}\\
      &\int_{\Omega^{\rho+}} |\nabla U|^2 d\mu \leq  C \rho^{-2} \left\{ \log \frac{\rho}{\delta}\right\}^{-\frac{1}{2}}, \label{eqn:MH21_8}\\
      &\sup_{\Omega^{\rho, \rho^{-1}}} U \leq   C \rho^{-2} \left\{ \log \frac{\rho}{\delta}\right\}^{-\frac{1}{4}},  \label{eqn:MH21_4}
 \end{align}
 for some uniform constant $C=C(A)$. 
\label{prn:MH21_1}  
\end{prop}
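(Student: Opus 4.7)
The strategy is to test the differential inequality $\Delta_f U\geq 2(Z+U^2)$ on $\Omega^{\delta,\delta^{-1}}$ (see (\ref{eqn:MH21_5})) against a logarithmic cutoff in the $\varphi$-coordinate $r=|\varphi^{-1}(\cdot)|$, and to exploit the global bound $\int_M(Z+U^2)\,d\mu\leq C$ of Lemma~\ref{lma:GH20_2}. Concretely, I take $\psi$ to be a Lipschitz function on $M$ that vanishes on the inner part, equals $1$ on $\Omega^{\rho+}$, and on $\Omega^{\delta,\rho}$ depends only on $r$ via $\psi(r)=\log(r/\delta)/\log(\rho/\delta)$ (mollified near the endpoints so as to be smooth, without affecting the leading order). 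Because $\varphi^{*}g$ is $C^5$-close to $g_E$ and $\varphi^{*}f$ is $C^5$-close to $|x|^2/4$, a direct computation in flat polar coordinates gives, up to a relative error of order $\delta$,
\[
\Delta_f \psi \;=\; \psi''+\frac{3}{r}\psi'-\frac{r}{2}\psi' \;=\; \frac{1}{\log(\rho/\delta)}\left(\frac{2}{r^2}-\frac{1}{2}\right)\quad\text{on }\Omega^{\delta,\rho}.
\]

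The key computation is then the logarithmic capacity bound
\[
\int_M(\Delta_f\psi)^2\,d\mu\;\leq\;\frac{C}{\log^2(\rho/\delta)}\int_\delta^\rho\frac{r^3}{r^4}\,dr\;=\;\frac{C}{\log(\rho/\delta)},
\]
where the dimension-$4$ volume factor $r^3$ is precisely what renders the apparently divergent $\int dr/r$ into a single $\log(\rho/\delta)$. On the support of $\psi$ the curvature is small ($|Rm|\leq C\delta$ on $\Omega^{\delta,\delta^{-1}}$ by Definition~\ref{dfn:MI01_1}(c), and $|Rm|\leq C/d^2\leq C\delta^2$ on the outer part by Lemma~\ref{lma:GH20_1}), so $(1-4|Rm|)U\geq 0$ and (\ref{E216a}) yields $\psi\Delta_f U\geq 2\psi(Z+U^2)$. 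Integrating by parts twice---the flux at infinity vanishes because $|\nabla U|$ is bounded by Theorem~\ref{thm:MH20_4} and $e^{-f}$ decays Gaussianly, while $\nabla\psi=0$ outside a compact set---and applying Cauchy--Schwarz with $\int_M U^2\,d\mu\leq C$ gives
\[
2\int_{\Omega^{\rho+}}(Z+U^2)\,d\mu\;\leq\;\int_M U\,\Delta_f\psi\,d\mu\;\leq\;\left(\int_M U^2\,d\mu\right)^{1/2}\left(\int_M(\Delta_f\psi)^2\,d\mu\right)^{1/2}\;\leq\;\frac{C}{\sqrt{\log(\rho/\delta)}},
\]
which is (\ref{eqn:MH21_7}).

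The gradient estimate (\ref{eqn:MH21_8}) follows immediately from Lemma~\ref{lma:MH20_4}: applying (\ref{eqn:MH20_12}) at scale $r=\rho$ gives $\int_{\Omega^{\rho+}}|\nabla U|^2\,d\mu\leq C\rho^{-2}\int_{\Omega^{0.5\rho+}}U^2\,d\mu$, and invoking the just-proved (\ref{eqn:MH21_7}) on $\Omega^{0.5\rho+}$ (valid because $\rho\geq 16\delta$ implies $\log(0.5\rho/\delta)\geq c\log(\rho/\delta)$) produces the desired $C\rho^{-2}/\sqrt{\log(\rho/\delta)}$. For the pointwise estimate (\ref{eqn:MH21_4}), for any $x\in\Omega^{\rho,\rho^{-1}}$ I set $s=|\varphi^{-1}(x)|$ and apply the Moser bound (\ref{eqn:MH20_11}) at a scale $r\in[s,2s]\subset[\rho,2\rho^{-1}]$ to obtain
\[
U(x)\;\leq\;Cr^{-2}\left(\int_{\Omega^{0.25r,2r}}U^2\,d\mu\right)^{1/2}\;\leq\;C\rho^{-2}\{\log(\rho/\delta)\}^{-1/4}.
\]

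The main technical obstacle is verifying that the $C^5$-closeness of $\varphi^{*}g$ to $g_E$ perturbs $\Delta_f\psi$ only to lower order: the pointwise error is bounded by $O(\delta)\cdot(|\psi''|+|\psi'|)\leq O(\delta)/(r^2\log(\rho/\delta))$, whose square integrated against $d\mu$ contributes $O(\delta^2/\log(\rho/\delta))$, which is negligible. One must also ensure that the mollification of $\psi$ at the corners $r=\delta$ and $r=\rho$ contributes only $O(\log^{-2}(\rho/\delta))$ to $\int(\Delta_f\psi)^2\,d\mu$, which is again lower order than the main $O(\log^{-1}(\rho/\delta))$ term. Both issues reduce to a flat-cone calculation, but they must be tracked carefully to obtain the sharp exponents $1/2$ and $1/4$ in the statement.
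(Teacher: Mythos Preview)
Your argument is correct, and it takes a genuinely different route from the paper's proof. The paper establishes (\ref{eqn:MH21_7}) by a dyadic iteration: at each scale $r$ it uses a sharp cutoff supported on $\Omega^{0.5r+}$, integrates by parts once, and combines the resulting $\int|\nabla U|$ term with the gradient bound (\ref{eqn:MH20_12}) to obtain the self-improving inequality
\[
\left\{\int_{\Omega^{r+}}(Z+U^2)\,d\mu\right\}^2 \leq C\int_{\Omega^{0.25r,0.5r}}(Z+U^2)\,d\mu.
\]
Summing this over $k\sim\log_2(\rho/\delta)$ dyadic shells and invoking the global bound $\int_M(Z+U^2)\,d\mu\leq C$ yields the $k^{-1/2}$ decay. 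By contrast, you bypass the iteration entirely: a single logarithmic cutoff, two integrations by parts, and Cauchy--Schwarz against $\|U\|_{L^2(d\mu)}$ produce the same $\{\log(\rho/\delta)\}^{-1/2}$ in one step. The key observation you exploit is that in dimension $4$ the function $\log r$ is harmonic for the flat Laplacian, so $\|\Delta_f\psi\|_{L^2(d\mu)}^2$ picks up exactly one factor of $\log(\rho/\delta)$ from the integral $\int_\delta^\rho r^{-1}\,dr$.

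Your approach is shorter and more transparent about where the logarithm originates, but it requires an explicit computation of $\Delta_f\psi$ and hence the perturbation analysis you flag at the end (closeness of $\varphi^*g$ to $g_E$, mollification at the corners). The paper's dyadic argument is more robust in this respect: it never needs to compute $\Delta_f$ of a specific test function, only to bound $|\nabla\psi|\leq Cr^{-1}$ for a standard cutoff, so the metric perturbation is absorbed automatically into the Moser and energy estimates of Lemma~\ref{lma:MH20_4}. The derivations of (\ref{eqn:MH21_8}) and (\ref{eqn:MH21_4}) from (\ref{eqn:MH21_7}) via Lemma~\ref{lma:MH20_4} are essentially the same in both proofs.
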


\begin{proof}
   Let $\psi$ be a cutoff function supported on $\Omega^{0.5r+}$ and equal $1$ on $\Omega^{r+}$.  Multiplying $\psi$ to both sides of the inequality
   $\Delta_f U \geq 2(Z+U^2)$ and doing integration by parts, we obtain
   \begin{align*}
    2\int_{M} \left(Z+U^2\right)  \psi d\mu &\leq -\int_{M} \langle \nabla U, \nabla \psi \rangle d\mu \leq Cr^{-1} \int_{\Omega^{0.5r,r}} |\nabla U| d\mu\\
        &\leq Cr^{-1} \left|\Omega^{0.5r,r} \right|^{\frac{1}{2}} \left\{ \int_{\Omega^{0.5r,r}} |\nabla U|^2 d\mu \right\}^{\frac{1}{2}}\\
        &\leq Cr\left\{ \int_{\Omega^{0.5r,r}} |\nabla U|^2 d\mu \right\}^{\frac{1}{2}}, 
   \end{align*}
   where we used the fact that $\Omega^{\delta, \delta^{-1}}$ is cone-like and $\delta<r<1$.   Since $\psi$ is supported on $\Omega^{0.5r+}$ and $\psi \geq 0,  U \geq 0$ always, we arrive at
   \begin{align}
    \int_{\Omega^{r}} \left(Z+U^2 \right) d\mu \leq Cr\left\{ \int_{\Omega^{0.5r,r}} |\nabla U|^2 d\mu \right\}^{\frac{1}{2}}.  \label{eqn:MH21_1}
   \end{align}
   Note that (\ref{eqn:MH20_12}) implies that
    \begin{align}
      \int_{\Omega^{0.5r,r}} |\nabla U|^2 d\mu \leq \int_{\Omega^{0.5r+}} |\nabla U|^2 d\mu \leq C r^{-2} \int_{\Omega^{0.25r,0.5r}} U^2 d\mu 
      \leq C r^{-2} \int_{\Omega^{0.25r,0.5r}} \left(Z+U^2 \right) d\mu.   \label{eqn:MH21_2}
    \end{align}
   Combining (\ref{eqn:MH21_1}) and (\ref{eqn:MH21_2}) yields that
   \begin{align}
     \left\{ \int_{\Omega^{r}} \left(Z+U^2\right) d\mu \right\}^2 \leq C  \int_{\Omega^{0.25r,0.5r}} \left(Z+U^2\right) d\mu.   \label{eqn:MH21_3}
   \end{align}
   We remind the reader that $C$ above depends only on $A$ and does not depend on the manifold $M$. 
   Fix $\rho \in (\delta, 1)$. 
   For each positive integer $i$, let $r_i=2^{-i+1} \rho$.  Then we have
   \begin{align*}
     \left\{ \int_{\Omega^{\rho+}} \left(Z+U^2\right) d\mu \right\}^2&=\left\{ \int_{\Omega^{r_1+}} \left(Z+U^2\right) d\mu \right\}^2 \leq \left\{ \int_{\Omega^{r_i+}} \left(Z+U^2\right) d\mu \right\}^2\\
      &\leq \int_{\Omega^{0.25r_i,0.5r_i}} \left(Z+U^2\right) d\mu=\int_{\Omega^{2^{-i-1}\rho, 2^{-i}\rho}} \left(Z+U^2\right) d\mu.
   \end{align*}
   In the above inequalities, let $i$ run from $1$ to $k$ and then sum them together, we obtain
   \begin{align*}
     k\left\{ \int_{\Omega^{\rho+}} \left(Z+U^2\right) d\mu \right\}^2 &\leq \sum_{i=1}^{k}\int_{\Omega^{2^{-i-1}\rho, 2^{-i}\rho}} \left(Z+U^2\right) d\mu =\int_{\Omega^{2^{-k-1}\rho, 2^{-1}\rho}} \left(Z+U^2\right) d\mu\\ 
      &\leq \int_{M} \left(Z+U^2\right) d\mu \leq C. 
   \end{align*}
   Consequently, we obtain $\int_{\Omega^{\rho+}} \left(Z+U^2\right) d\mu \leq C k^{-\frac{1}{2}}$, which together with (\ref{eqn:MH20_11}) and (\ref{eqn:MH20_12}) yields that
   \begin{align*}
   &\int_{\Omega^{\rho+}} |\nabla U|^2 d\mu \leq C \rho^{-2} \int_{\Omega^{0.5\rho,\rho}} U^2 d\mu \leq C \rho^{-2} \int_{\Omega^{0.5\rho+}} U^2 d\mu \leq C\rho^{-2} k^{-\frac{1}{2}},\\
   &\sup_{\Omega^{\rho, \rho^{-1}}} U \leq C\rho^{-2} \left\{ \int_{\Omega^{0.5\rho, 4\rho^{-1}}} U^2 d\mu \right\}^{\frac{1}{2}} 
      \leq C\rho^{-2} \left\{ \int_{\Omega^{0.5\rho+}} U^2 d\mu \right\}^{\frac{1}{2}}  \leq C\rho^{-2} k^{-\frac{1}{4}}. 
   \end{align*}
   Therefore, (\ref{eqn:MH21_7}), (\ref{eqn:MH21_8}) and (\ref{eqn:MH21_4}) follow from the above inequalities by setting $k \sim \log_{2} \frac{\rho}{\delta}$. 
\end{proof}

\begin{prop}[\textbf{Estimate of $V$}]
 For each $\rho \in (100\delta, 0.01 \delta^{-1})$, we have
\begin{itemize}
\item[(a).] $V$ satisfies uniform Harnack inequality:
     \begin{align}
       C_{\rho}^{-1} \leq V(x) \leq C_{\rho}, \quad \forall \; x \in \Omega^{\rho, \rho^{-1}}. 
      \label{eqn:MH22_7} 
     \end{align}
\item[(b).] $V$ has uniformly bounded $C^{1,\frac{1}{2}}$-norm:
     \begin{align}
       \norm{V}{C^{1,\frac{1}{2}}\left(\Omega^{\rho, \rho^{-1}} \right)} \leq C_{\rho}. 
       \label{eqn:MH22_8}
     \end{align}
\item[(c).] There exists a uniform constant $C=C(A)$ such that
 \begin{align}
    \sup_{x \in \Omega^{1+}} V(x) \leq C.   \label{eqn:MH22_1}
 \end{align}     
\end{itemize}
\label{prn:MH21_2} 
\end{prop}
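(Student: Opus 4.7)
My plan is standard elliptic regularity for (a) and (b), and maximum-principle analysis assisted by integral bounds on $U$ for (c).

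For (a), the key point is that on $\Omega^{\rho,\rho^{-1}}$ the function $V$ satisfies the linear elliptic equation $\Delta_f V = (1-2U)V$ from Lemma~\ref{lma:MI02_2}, with uniformly elliptic principal part (since $\varphi^* g$ is $C^5$-close to $g_E$ by Definition~\ref{dfn:MI01_1}(c)), bounded drift $\nabla f$ (since $\varphi^* f$ is $C^5$-close to $|x|^2/4$ by (d)), and bounded zero-order term $(1-2U)$ (since $U$ is bounded on $\Omega^{\rho,\rho^{-1}}$ by Proposition~\ref{prn:MH21_1}(c)). Because $V>0$ by (\ref{eqn:MI03_2}), Moser's Harnack inequality applies on small balls. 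On $\partial B(p,1)$ one has $V\leq 1$ by the definition of $\alpha$, and on $B(p,1)$ one has $V\geq 1/H$ from the Harnack assumption in Definition~\ref{dfn:MH23_1}. A Harnack chain of finitely many overlapping small balls connecting $B(p,1)$ to a point of $\Omega^{\rho,\rho^{-1}}$, of length bounded in terms of $A$, $H$, and $\rho$, then yields (\ref{eqn:MH22_7}). For (b), once $V$ is $L^\infty$-bounded by (a), the right-hand side of the PDE is $L^\infty$; the interior $W^{2,p}$ estimate provides $V\in W^{2,p}$ for all $p<\infty$, and the Sobolev embedding $W^{2,8}\hookrightarrow C^{1,1/2}$ (in dimension four) produces (\ref{eqn:MH22_8}) on a slightly smaller annulus, which extends to $\Omega^{\rho,\rho^{-1}}$ by a covering argument after infinitesimally shrinking $\rho$.

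For (c), the first observation is that $V\to 0$ at infinity: combining the curvature decay $|Rm|(y)\leq C/d(y,p)^2$ of Lemma~\ref{lma:GH20_1} with $|R|\leq n|Rc|\leq Cn|Rm|$ forces $R(y)\to 0$, and since $\alpha>0$ is a fixed constant for the shrinker, $V(y)=R(y)/\alpha\to 0$. Hence $\sup_{\Omega^{1+}} V$ is attained at some $x_0\in\overline{\Omega^{1+}}$. If $x_0\in\partial\Omega^{1+}\subset\Omega^{1/2,2}$, part (a) applied with $\rho=1/2$ bounds $V(x_0)$; arranging the Harnack chain so that it begins from the direct inequality $V\leq 1$ on $\partial B(p,1)$ (rather than from the $H$-dependent bound on $B(p,1)$) brings the resulting constant down to one depending only on $A$.

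The main obstacle is ruling out a large interior maximum. If $x_0$ lies in the interior of $\Omega^{1+}$, the maximum principle applied to $\Delta_f V=(1-2U)V$ at $x_0$ forces $U(x_0)\geq 1/2$. To handle this case I plan to invoke the integral bound $\int_M(Z+U^2)\,d\mu\leq C(A)$ from Lemma~\ref{lma:GH20_2}, which forces the $d\mu$-measure of $\{U\geq 1/2\}$ to be at most $4C(A)$, together with a Moser iteration applied to $V$ via the rewritten equation $(\Delta_f-1)V=-2UV$ and test functions of the form $\phi^2 V^{2k-1}$ supported inside $\Omega^{1+}$. The resulting reverse-H\"older inequality converts an $L^2(d\mu)$ bound on $V$ near $\partial\Omega^{1+}$, itself controlled by (a), into an $L^\infty$ bound on $V$ over all of $\Omega^{1+}$; the quantitative smallness $\int_{\Omega^{\rho+}} U^2\,d\mu\leq C(\log\rho/\delta)^{-1/2}$ on outer annuli from Proposition~\ref{prn:MH21_1} is precisely what is needed to absorb the $UV^2$ terms in the energy inequality on the part of $\Omega^{1+}$ where $U$ is not yet small.
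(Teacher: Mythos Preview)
Your treatment of parts (a) and (b) is correct and matches the paper's approach: both invoke the standard Harnack inequality for positive solutions of $\Delta_f V=(1-2U)V$ (using the smallness of $U$ on the annulus from Proposition~\ref{prn:MH21_1}) for (a), and then interior $W^{2,p}$ estimates plus Sobolev embedding for (b).

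For part (c) your plan diverges from the paper's and, as written, has a genuine gap. The paper does not use Moser iteration at all; it uses the barrier $f^{-1}$, which in dimension four satisfies the companion equation $\Delta_f f^{-1}=f^{-1}-2Rf^{-3}$ (equation~(\ref{eqn:MH22_3})). One picks $c_0=c_0(A)$ so that $V-c_0f^{-1}<0$ on $\partial\Omega^{1+}$, notes that $V-c_0f^{-1}\to 0$ at infinity, and applies the maximum principle: at any interior positive maximum $y$ one obtains $0\ge \Delta_f(V-c_0f^{-1})(y)\ge \tfrac12 V(y)-c_0f^{-1}(y)$, hence $V(y)\le 2c_0f^{-1}(y)$ and therefore $V(x)\le c_0\bigl(f^{-1}(x)+f^{-1}(y)\bigr)\le C(A)$ everywhere on $\Omega^{1+}$.

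The problem with your Moser scheme is that $\Omega^{1+}$ is unbounded and Moser iteration is local: a reverse-H\"older step bounds $\sup_B V$ by $\|V\|_{L^2(2B)}$, not $\sup_{\Omega^{1+}}V$ by $\|V\|_{L^2}$ near $\partial\Omega^{1+}$. To make it global you would need a weighted Sobolev inequality for $d\mu$ on all of $\Omega^{1+}$ together with a global $L^2(d\mu)$ bound on $V$; but $\|V\|_{L^2(d\mu)}^2=\alpha^{-2}\int R^2\,d\mu$, and while $\int R^2\,d\mu\le C(A)$, the factor $\alpha^{-2}$ blows up as the shrinker approaches the flat cone, so this route yields no uniform bound. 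Your attempt to absorb $\int \phi^2 UV^{2k}\,d\mu$ via the integral smallness of $U^2$ from Proposition~\ref{prn:MH21_1} also breaks down beyond the almost-flat annulus: past $\Omega^{\delta,\delta^{-1}}$ the weight $e^{-f}$ is exponentially small, so a bound on $\int U^2\,d\mu$ says essentially nothing about $U$ there, and you have no independent pointwise bound on $U$ on the outer part depending only on $A$. Finally, the observation that $U(x_0)\ge 1/2$ at an interior maximum is correct but is never fed back into the argument. The missing idea is precisely the comparison function $f^{-1}$, which matches the drift structure of the equation for $V$ and decays at infinity, so that a single application of the maximum principle closes the estimate globally.
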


\begin{proof}

Part (a) follows from standard elliptic equation theory(c.f. Theorem 8.20 of the classical book Gilbarg-Trudinger~\cite{GT01}), since $\Delta_f V=(1-U)V$ on $\Omega^{0.5\rho, 2\rho^{-1}}$ and $U$ is very small 
by inequality (\ref{eqn:MH21_4}) of Proposition~\ref{prn:MH21_1}. 
Since $(1-U)V$ is uniformly bounded on $\Omega^{0.5\rho, 2\rho^{-1}}$, it follows from the uniform ellipticity of the operator $\Delta_f$ that $\norm{V}{W^{2,p}(\Omega^{0.75\rho, 1.5\rho^{-1}})}$ is uniformly bounded
by $C_{\rho, p}$ for each $p>n=4$.  Then inequality (\ref{eqn:MH22_8}) in part (b) follows from Sobolev embedding theorem. 

We now focus on the proof of part (c). Recall that $ \Delta_f f^{-1}=f^{-1}-2Rf^{-3}$. 
  Choose $c_0$ such that $V-c_0 f^{-1}$ is negative in $\partial \Omega^{1+}$.  Then we have
  \begin{align*}
       \Delta_{f} \left( V-c_0 f^{-1}\right)= V-2UV-c_0 \left( f^{-1}-2Rf^{-3}\right) \geq \frac{1}{2}V-c_0 f^{-1}. 
  \end{align*}
  Note that $V-c_0f^{-1}$ approaches $0$ at infinity of $M$. If $V-c_0f^{-1} \leq 0$, we obtain upper bound of $V$ directly.  Otherwise, $V-c_0f^{-1}$ must achieve some positive maximum at some point $y \in \Omega^{1+}$,
  where we have
  \begin{align*}
    0 \geq \frac{1}{2}V(y) -c_0 f^{-1}(y) \quad \Rightarrow \quad V(y) \leq 2c_0 f^{-1}(y). 
  \end{align*}  
  Therefore, for arbitrary point $x \in \Omega^{1+}$, we have
  \begin{align*}
   &\quad \; V(x)-c_0f^{-1}(x) \leq V(y)-c_0f^{-1}(y) \leq c_0 f^{-1}(y), \\
   &\Rightarrow V(x) \leq  c_0 \left( f^{-1}(x) + f^{-1}(y) \right) \leq C,
  \end{align*}
  since both $x,y$ locate in $\Omega^{1+}$ where $f$ is uniformly bounded from below by a positive number. 
  The proof of part (c) is complete. 
\end{proof}

\section{Proof of  Main theorems}

The proof  of Theorem~\ref{thm:1} is carried out by a contradiction argument.  The basic idea is to obtain a limit flat cone, together with a eigenfunctions $V_{\infty}$ on
the regular part of the flat cone, whenever the statement of Theorem~\ref{thm:1} fails.  Checking the formation of $V_{\infty}$, we shall show that it must be of the form $cr^{-2}$ by rigidity of the eigenfunctions in Theorem~\ref{AT02}.  However, this will contradicts our Harnack inequality.

\begin{proof}[Proof of Theorem \ref{thm:1}]
Suppose Theorem~\ref{thm:1} fails, then we can have a sequence of Ricci shrinkers $(M_i, g_i, f_i) \in \mathcal{M}^{*}(A,H)$ such that
\begin{align*}
   d_{PGH} \left( (M_i, p_i, g_i), \left(\R^{4}/\Gamma, 0, g_{E} \right)\right) \to 0. 
\end{align*}
By Theorem~\ref{T101},  the convergence topology can be improved as follows
\begin{align*}
    (M_i, p_i, g_i)  \longright{pointed-\hat{C}^{\infty}-Cheeger-Gromov} \left(\R^{4}/\Gamma, 0, g_{E}\right). 
\end{align*}
Therefore, for each fixed small $\delta$,  the manifold $M_i$ has an approximating annulus part $\Omega_i^{\delta, \delta^{-1}}$ to the standard annulus $\Omega_{\infty}^{\delta, \delta^{-1}}$ in the flat cone $\R^{4}/\Gamma$. 
Then the discussion in Section~\ref{sec:annulus} can be applied.  We have uniform estimates of $V_i$ on $\Omega_i^{\rho, \rho^{-1}}$ by Proposition~\ref{prn:MH21_2},
where $\rho=\delta^{\frac{1}{1000}}$.  Let $i \to \infty$, we obtain function $V_{\infty}$ on $\Omega_{\infty}^{\rho, \rho^{-1}}$. Moreover, $V_{\infty}$ is a $C^{1,\frac{1}{2}}$-function. The convergence from $V_i$ to $V$ happens in $C^{1, \frac{1}{3}}$-topology.
In particular, we can find a point $z_{\infty} \in \partial B(0,1)$ such that
\begin{align}
    V_{\infty}(z_{\infty})=1,   \label{eqn:MH23_1}
\end{align}
according to the choice of the normalization condition for $V_i$. 
Let $\delta \to 0$, we obtain $\rho \to 0$.  Consequently, we have a function $V_{\infty}$ defined on $\left\{\R^{4}/\Gamma \right\} \backslash \{0\}$.
Moreover, by estimate (\ref{eqn:MH21_4}) in Proposition~\ref{prn:MH21_1},  taking limit of the first equation of (\ref{E216a}) implies that
\begin{align}
   \Delta_{f_{\infty}} V_{\infty}=V_{\infty}    \label{eqn:MH23_2}
\end{align}
in the distribution sense on $\left\{\R^{4}/\Gamma \right\} \backslash \{0\}$.   Note that $f_{\infty}=\frac{1}{4}r^2$, where $r$ is the distance to the vertex. Clearly, $f_{\infty}$ is smooth.
By standard elliptic theory, we know that $V_{\infty}$ is a smooth function and (\ref{eqn:MH23_2}) holds in the classical sense. 
In light of part (c) of Proposition~\ref{prn:MH21_2},  $V_{\infty}$ is bounded outside the unit ball. 
It follows from Theorem~\ref{AT02} and (\ref{eqn:MH23_1}) that $V_{\infty}=r^{-2}$. Therefore, when $i$ is sufficiently large, the fast increasing rate of $R_i$ near $p$ will violate our uniform Harnack constant $H$.
The proof  of Theorem~\ref{thm:1} is complete.
\end{proof}

The proof of Theorem~\ref{thm:2} is based on maximum principle and the application of Theorem~\ref{thm:1}. 
Roughly speaking, if the statements in Theorem~\ref{thm:2} fail, then we can obtain a sequence of Ricci shrinkers converging to a flat cone which is impossible by Theorem~\ref{thm:1}.
To force the limit to be a flat cone, maximum principle related to $R$ and $f$ is essentially used. 

\begin{proof}[Proof of part (a) of Theorem~\ref{thm:2}]
We use contradiction argument. 
If the statement of part (a) was wrong, there exists a sequence of  $(M_i, p_i, g_i,f_i) \in \mathcal{M}^*(A,H)$ such that
$f_i(p_i) \to 0$.  In light of equation (\ref{E101}), we have
\begin{equation}\label{E300}
   0 \leq \lim_{i \to \infty} R_i(p_i) \leq \lim_{i \to \infty} f_i(p_i)=0. 
\end{equation}
From Theorem \ref{T101}, by taking a subsequence if necessary,  we have
\begin{align}
    (M_i, p_i, g_i, f_i)  \longright{pointed-\hat{C}^{\infty}-Cheeger-Gromov} \left(M_{\infty}, p_{\infty}, g_{\infty}, f_{\infty} \right),  
\label{eqn:MH26_1}    
\end{align}
where $(M_{\infty}, p_{\infty}, g_{\infty}, f_{\infty})$ is an orbifold  Ricci shrinker. 
We claim that $R_{\infty}(p_{\infty})=0$.  Actually, if $p_{\infty}$ is a regular point, 
then it follows from smooth convergence around $p_{\infty}$ and (\ref{E300}) that $R_{\infty}(p_{\infty})=0$.
Therefore, we only need to study the case that $p_{\infty}$ is singular.  However, taking limit of equation (\ref{E101}), we know
$R_{\infty}+|\nabla f_{\infty}|^2=f_{\infty}$ on the regular part of $M_{\infty}$.  Lifting this equation to the orbifold chart around $p_{\infty}$, we obtain
\begin{align}
   \tilde{R}_{\infty} + |\widetilde{\nabla f_{\infty}}|^2=\tilde{f}_{\infty}, \quad \textrm{on} \; B(0, \delta) \backslash \{0\}.     \label{eqn:MH26_4}
\end{align}
Since (\ref{E101}) implies that $\left|\nabla \sqrt{f} \right| \leq \frac{1}{2}$ uniformly, it is clear that $f_{\infty}(p_{\infty})=0$ and $\tilde{f}_{\infty}(0)=0$, where $0$ is the image of 
$p_{\infty}$ in the local orbifold chart description.   On $B(0, \delta) \backslash \{0\}$, we also have $\Delta \tilde{f}_{\infty}=\frac{n}{2}-\tilde{R}_{\infty}$ in classical sense.
As $\tilde{R}_{\infty}$ is a smooth function on $B(0,\delta)$ and $\tilde{f}_{\infty}$ has bounded $W^{1,2}$-norm(c.f. (\ref{eqn:MH26_4})), we know
$\Delta \tilde{f}_{\infty}=\frac{n}{2}-\tilde{R}_{\infty}$ holds on $B(0, \delta)$ in the distribution sense. 
Consequently,  standard elliptic equation theory implies that $\tilde{f}_{\infty}$ is a smooth function across the singularity. 
Therefore, (\ref{eqn:MH26_4}) is improved to hold across singularity:
\begin{align}
   \tilde{R}_{\infty} + |\widetilde{\nabla f_{\infty}}|^2=\tilde{f}_{\infty}, \quad \textrm{on} \; B(0, \delta).     \label{eqn:MH26_5}
\end{align}
However, note that $\tilde{f}_{\infty}$ is $\Gamma$-invariant, where $\Gamma$ is the local orbifold group fixing the point $0$.    The smoothness of $\tilde{f}_{\infty}$ implies that
$|\widetilde{\nabla f_{\infty}}|(0)=0$. Plugging this equality and the fact that $\tilde{f}_{\infty}(0)=0$  into (\ref{eqn:MH26_5}), we obtain $\tilde{R}_{\infty}(0)=0$, which means that
$R_{\infty}(p_{\infty})=0$.  Therefore, no matter whether $p_{\infty}$ is a regular point, we have proved that $R_{\infty}(p_{\infty})=0$, as claimed. 
Since $f_{\infty}$ achieves a minimum $0$ at point $p_{\infty}$, the fact $R_{\infty}(p_{\infty})=0$ forces that $(M_{\infty}, g_{\infty})$ is a flat cone by Theorem~\ref{thm:MH26_2}. 
Then (\ref{eqn:MH26_1}) reads as 
\begin{align}
    (M_i, p_i, g_i, f_i)  \longright{pointed-\hat{C}^{\infty}-Cheeger-Gromov} \left(\R^{4}/\Gamma, 0, g_{E}, f_{E}\right),  
\label{eqn:MH26_3}    
\end{align}
which contradicts Theorem~\ref{thm:1}. 
The proof of part (a) of Theorem~\ref{thm:2} is complete.
\end{proof}

\begin{proof}[Proof of part (b) of Theorem~\ref{thm:2}]
We use contradiction argument again. 
For otherwise, there exists a sequence $(M_i, p_i, g_i,f_i) \in \mathcal{M}^*(A,H)$ such that $R_i(q_i)$ tends to $0$  and $q_i \in B(p_i,1)$.
Using Theorem \ref{T101} again, by taking a subsequence if necessary,  we may assume (\ref{eqn:MH26_1}) holds. 
Furthermore, we can assume $q_i \to q_{\infty} \in M_{\infty}$. 

We claim that $R(q_{\infty})=0$.  This holds trivially if $q_{\infty}$ is a regular point, following from the smooth convergence around $q_{\infty}$ and the fact $R(q_i) \to 0$. 
Therefore we focus on the case that $q_{\infty}$ is singular.  By Theorem~\ref{thm:1} and Theorem~\ref{thm:MH26_2}, we know $(M_{\infty}, g_{\infty})$ is not a flat cone 
and $R(q_{\infty})>2\xi$ for some positive $\xi$ depending on $q_{\infty}$.  Recall that $R$ is a Lipschitz continuous function on $M_{\infty}$.
So we can choose a small radius $r$ such that $R>\xi$ on $\partial B(q_{\infty}, r)$ which contains no singularity.    
Note that from part (a) and the fact that $f_{\infty}$ achieves minimum value at $p_{\infty}$, we obtain $f_{\infty} \geq C_a$ everywhere. 
Letting $c$ be a small constant in $(0, 0.5 C_a \xi)$, we obtain $R f_{\infty} \geq 2c$ on $\partial B(q_{\infty}, r)$, which means that
\begin{align*}
   R - 2cf^{-1} \geq 0,  \quad \textrm{on} \; \partial B(q_{\infty}, r), \quad \Rightarrow \quad R-cf^{-1} \geq 0, \quad \textrm{on} \; \partial B(q_i, r). 
\end{align*}
Note that the value of $R-cf^{-1}$ at point $q_i$ is strictly less than $0$ for large $i$.  Therefore, the minimum value of $R-cf^{-1}$ on $B(q_i,r)$ is achieved at some interior point $q_i'$, where we have
\begin{align*}
 0 \leq \Delta_f \left( R-cf^{-1}\right)= \left( R-cf^{-1}\right) -2|Rc|^2 + 2Rf^{-3} \leq \left( R-cf^{-1}\right) + 2Rf^{-3},
\end{align*}
which yields that
\begin{align*}
    R(q_i') \geq \frac{cf^2}{2+f^3}(q_i').  
\end{align*}
Since $q_i'$ is the minimum value of $R-cf^{-1}$, we have
\begin{align*}
   R(q_i)-cf^{-1}(q_i) \geq R(q_i')-cf^{-1}(q_i') \geq \frac{cf^2}{2+f^3}(q_i') -cf^{-1}(q_i')=\frac{-2c^2}{f(q_i') \left( f^3(q_i') +c\right)}. 
\end{align*}
The above inequality can be rewritten as
\begin{align*}
   c^{-1}R(q_i) \geq f^{-1}(q_i) -\frac{2c}{f(q_i') \left( f^3(q_i') +c\right)}. 
\end{align*}
By the fact $C_a \leq f \leq C(A)$ point-wisely in $B(p_i,1)$, we can choose $c$ very small such that the right hand side of the above inequality
is bounded below by some positive $C_0$ depending only on $A$.  This forces that
\begin{align*}
  R(q_i) \geq C_0 c
\end{align*}
for every large $i$, which contradicts our assumption that $R(q_i) \to 0$.  Therefore, no matter whether $q_{\infty}$ is a regular point, we have
proved that $R(q_{\infty})=0$. 

Now we can follow the route of the proof of part (a) to obtain a contradiction by the combination of Theorem~\ref{thm:MH26_2} and Theorem~\ref{thm:1}. 
Thus, the proof of part (b) of Theorem~\ref{thm:2} is complete.
\end{proof}

\begin{proof}[Proof of part (c) of Theorem~\ref{thm:2}]

We first remark that part (b) of Theorem~\ref{thm:2} can be stated in a more general manner. 
In particular, we have $R \geq C_{b,r}$ on each $B(p,r)$, where $C_{b,r}$ is a positive constant depending only on $A$ and $r$. 
Therefore, it follows from   (\ref{E106}) in Lemma~\ref{L100} that $R$ has uniform lower bound on the set $\{x | f(x) =9\}$. 
So we can choose a uniform constant $c=c(A,H)$ such that $u=R-cf^{-1}-4cf^{-2}>0$ on the set $\{x | f(x) =9\}$.
By inequalities (\ref{E106}) again and the fact that $R \geq 0$, we have $\displaystyle \lim_{x \to \infty} u(x) \geq 0$. 

We claim that $u \geq 0$ on the set $\{x | f(x)  \geq 9\}$. For otherwise, $u$ achieve a negative minimum at some point $x_0 \in \{x| f(x)>9\}$.  
At the point $x_0$, we can apply maximum principle(c.f. inequality (6) of Chow-Lu-Yang~\cite{BLY11}) to obtain
\begin{align*}
 0 \leq \Delta_fu \le u-4cf^{-3}\left(\frac{f}{4}-2 \right)-cf^{-4}(2f+24)|\nabla f|^2. 
\end{align*}
Recall that $f(x_0)>9$. Then the above inequality yields that
\begin{align*}
 cf^{-3}(x_0) <  \left. 4cf^{-3}\left(\frac{f}{4}-2 \right)+cf^{-4}(2f+24)|\nabla f|^2 \right|_{x_0} \leq u(x_0) <0,
\end{align*}
which is absurd since $f$ is positive at $x_0$.  Therefore, we have proved $u \geq 0$ on $\{x | f(x)  \geq 9\}$, as claimed.  

The fact that $u \geq 0$ on $\{x | f(x)  \geq 9\}$ means that
\begin{align}
  Rf \geq c + 4cf^{-1} \geq c_0, \quad \textrm{on} \; \{x | f(x) \geq 9\},    \label{eqn:MH27_1}
\end{align}
for some $c_0=c_0(A,H)$, where we have used the upper bound of $f$ by  (\ref{E106}). 
On the other hand, by (\ref{E106}) again, the set $\{x|f(x)<9\}$ is a uniformly bounded set.  It follows from part (a) and the generalized version of part (b) that 
\begin{align}
  Rf \geq c_0',  \quad \textrm{on} \; \{x | f(x) < 9\}    \label{eqn:MH27_2}
\end{align}
for some uniform positive constant $c_0'=c_0'(A,H)$.    Letting $C_{c}=\min\{c_0, c_0'\}$, the combination of (\ref{eqn:MH27_1}) and (\ref{eqn:MH27_2}) implies that
$Rf \geq C_c$ on the whole $M$.     
Therefore, the proof of part (c) of Theorem~\ref{thm:2} is complete.  
\end{proof}

\section{Further Questions}

Inspired by Theorem~\ref{thm:1}, we believe the following statement is true. 

\begin{conj}
 For each positive integer $n \geq 4$,   there exists a uniform constant $\epsilon=\epsilon(n)$ such that 
 for each non-flat Ricci shrinker $(M,p, g,f)$ and each finite subgroup $\Gamma \subset O(n)$ acting freely on $S^{n-1}$, the following estimate hold:
  \begin{align*}
  d_{PGH} \left\{ (M,p,g), (\R^n/\Gamma,0,g_{E})\right\}>\epsilon. 
\end{align*} 
\label{cje:MH23_1}
\end{conj}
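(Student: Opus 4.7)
The strategy is a contradiction argument modeled on the proof of Theorem~\ref{thm:1}. Suppose for contradiction that one can find a sequence of non-flat Ricci shrinkers $(M_i^n,p_i,g_i,f_i)$ with $d_{PGH}\left((M_i,p_i,g_i),(\R^n/\Gamma,0,g_E)\right)\to 0$, where after passing to a subsequence $\Gamma$ is fixed (PGH closeness determines the volume density at the vertex, hence $|\Gamma|$). The first task is to upgrade this to pointed-$\hat{C}^{\infty}$-Cheeger-Gromov convergence to $(\R^n/\Gamma,0,g_E,f_E)$. The entropy hypothesis of Theorem~\ref{T101} is in fact automatic here: by (\ref{eqn:MI03_4}) together with the explicit form $f_E=|x|^2/4$ on the limit, one can show $\mu(g_i,1)\to -\log|\Gamma|$. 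Combined with Perelman's pseudo-locality Theorem~\ref{T201} applied to the canonical flow (\ref{E104}), a dimension-$n$ analogue of the compactness Theorem~\ref{T101} then yields smooth convergence on annular regions, hence an almost flat cone annulus $\Omega_i^{\delta,\delta^{-1}}$ in the sense of Definition~\ref{dfn:MI01_1}.

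With the annulus structure in hand, one carries over the machinery of Section~\ref{sec:annulus}. The elliptic relationships (\ref{E215})--(\ref{E216}) and the inequalities of Lemma~\ref{lma:MI02_2} hold in every dimension, so Lemma~\ref{lma:GH20_2} and the $U$--$Z$ estimates of Proposition~\ref{prn:MH21_1} go through verbatim for $n\ge 4$. The rigidity Theorem~\ref{AT02} is likewise dimension-free, as already remarked in Section~3. Normalizing $V_i=R_i/\alpha_i$ as in Definition~\ref{dfn:MH23_2} and passing to the limit, one obtains a positive eigenfunction $V_\infty$ of $\Delta_{f_\infty}$ on $\R^n/\Gamma\setminus\{0\}$, bounded at infinity by the analogue of Proposition~\ref{prn:MH21_2}(c), which by Theorem~\ref{AT02} must equal $c|x|^{-2}$ for some $c>0$.

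To turn this identification into a contradiction, one needs an a~priori upper bound on $V_i(p_i)=R_i(p_i)/\alpha_i$. In Theorem~\ref{thm:1} this was supplied by the standing Harnack hypothesis built into $\mathcal{M}^*(A,H)$. In the setting of Conjecture~\ref{cje:MH23_1} no such hypothesis is granted, so one must derive a local Harnack for $R_i$ on $B(p_i,1)$ from the shrinker equation alone. The natural route is to apply the elliptic Harnack inequality to $\Delta_{f_i}R_i=R_i-2|\text{Rc}_i|^2$, regarded as a uniformly elliptic equation whose zeroth-order coefficient is controlled by $\sup_{B(p_i,1)}|\text{Rc}_i|^2/R_i$, together with the pointwise bound $R_i\le f_i\le C$ from Lemma~\ref{L100}.

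The principal obstacle lies precisely here: one must bound $|\text{Rc}_i|^2/R_i$ on $B(p_i,1)$ in terms of $n$ alone. In dimension four this is supplied by Munteanu-Wang's Theorem~\ref{thm:MH20_4}, whose proof exploits algebraic identities peculiar to four dimensions, and a dimension-free analogue is a well-known open problem. Resolving Conjecture~\ref{cje:MH23_1} therefore appears to require either (i) extending Theorem~\ref{thm:MH20_4} to higher dimensions, or (ii) a genuinely different mechanism---such as a monotonicity formula along the ancient flow (\ref{E104})---that forces $R_i(p_i)$ to stay comparable to $\sup_{\partial B(p_i,1)}R_i$ once the geometry is close to a flat cone. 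This is, in my view, where the main difficulty of the conjecture resides.
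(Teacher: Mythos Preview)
The statement is Conjecture~\ref{cje:MH23_1}, which the paper explicitly leaves open; there is no proof in the paper to compare against. Your proposal is therefore not a proof but a diagnosis of the obstacles, and as such it is largely accurate: you correctly isolate the absence of the Harnack hypothesis and the lack of a higher-dimensional analogue of Theorem~\ref{thm:MH20_4} as the central difficulty in transporting the argument of Theorem~\ref{thm:1}.

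That said, you understate two further obstructions that the paper itself flags. First, immediately after stating the conjecture the authors note that $\epsilon$ is required to be independent of $\Gamma$, so ``one has to deal with the collapsing case.'' Your reduction ``after passing to a subsequence $\Gamma$ is fixed'' tacitly assumes $|\Gamma_i|$ stays bounded; if $|\Gamma_i|\to\infty$ the model cones themselves collapse and no annulus picture in the sense of Definition~\ref{dfn:MI01_1} is available, so the entire Section~\ref{sec:annulus} machinery breaks down before one even reaches the Harnack issue. Second, your appeal to ``a dimension-$n$ analogue of the compactness Theorem~\ref{T101}'' is optimistic: Haslhofer--M\"uller's orbifold compactness relies on the $L^2$-curvature bound coming from Gauss--Bonnet in dimension four, and in higher dimensions the limit singular set need not consist of isolated points even with an entropy bound. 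Both of these are genuine gaps in the strategy, not merely technicalities, and they sit alongside (not behind) the Munteanu--Wang obstacle you identify.
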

Note that in Conjecture~\ref{cje:MH23_1}, $\epsilon$ does not depend on $\Gamma$ and depends only on $n$. Therefore, one has to deal with the collapsing case. 
More generally, one may also replace the cone $\R^n/\Gamma$ by $\R^{k}/\Gamma \times \R^{n-k}$ for some $k \geq 4$ and $\Gamma \subset O(k)$.
Replacing the Euclidean space by cylinder, we also make the following conjecture. 
\begin{conj}
 For each positive integers $n \geq 4$ and $n \geq k \geq 3$,   there exists a uniform constant $\epsilon=\epsilon(n)$ such that 
 for each non-flat, non-Eisntein Ricci shrinker $(M,p, g,f)$ and each finite subgroup $\Gamma \subset O(k)$ acting freely on $S^{k}$, the following estimate hold:
  \begin{align*}
  d_{PGH} \left\{ (M,p,g), ((S^k/\Gamma) \times \R^{n-k}, 0,g_{c})\right\}>\epsilon, 
\end{align*} 
where $g_c$ is the cylindrical metric, i.e., product metric of $g_{round} \times g_E$. 
\label{cje:MH23_11}
\end{conj}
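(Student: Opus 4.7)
The plan is to mirror the contradiction strategy used for Theorem~\ref{thm:1}. Suppose toward a contradiction that there exists a sequence of non-flat, non-Einstein Ricci shrinkers $(M_i, p_i, g_i, f_i)$ satisfying
\[
d_{PGH}\left\{ (M_i,p_i,g_i),\;((S^k/\Gamma_i)\times \R^{n-k},0,g_c)\right\} \longrightarrow 0.
\]
Since the target cylinder has bounded curvature and (for fixed $\Gamma$) bounded geometry, an application of Perelman's pseudo-locality theorem (Theorem~\ref{thm:MI02_1}), in the same spirit as Section~\ref{sec:annulus}, should promote the convergence on annular regions $\Omega_i^{\delta,\delta^{-1}}$ cut out by level sets of $f_i$ to smooth pointed $\hat{C}^\infty$-Cheeger-Gromov convergence. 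The potentially collapsing case $|\Gamma_i|\to\infty$ must be handled separately by passing to the universal covering of the annulus before taking the limit.

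Next, I would introduce an auxiliary function adapted to the cylindrical geometry, playing the role of the normalized scalar curvature $V$ in Section~\ref{sec:annulus}. A natural candidate is
\[
W \;\triangleq\; |Rc|^2 - \tfrac{R^{2}}{k},
\]
which vanishes identically on $(S^k/\Gamma)\times\R^{n-k}$ (where $Rc$ has $k$ eigenvalues equal to $1/2$ and $n-k$ zero eigenvalues) and is strictly positive on any Ricci shrinker that is not locally a cylindrical quotient. Combining (\ref{E215}), (\ref{E216}) and (\ref{eqn:MH23_8}), one expects an elliptic inequality of schematic shape $\Delta_f W \geq W - C|Rm|\cdot W$, in which the perturbation $|Rm|$ is made arbitrarily small on $\Omega_i^{\delta,\delta^{-1}}$ by pseudo-locality. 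Rescaling $V_i = W_i/\alpha_i$ with $\alpha_i=\sup_{\partial B(p_i,1)} W_i$ and developing estimates analogous to Proposition~\ref{prn:MH21_1} and Proposition~\ref{prn:MH21_2} would produce a subsequential smooth limit $V_\infty\geq 0$ on $((S^k/\Gamma)\times \R^{n-k})\setminus\{0\}$ solving
\[
\Delta_{f_\infty} V_\infty \;=\; V_\infty, \qquad f_\infty = \tfrac{|y|^2}{4},
\]
with $y$ the $\R^{n-k}$-coordinate, and bounded at infinity in the $y$-direction.

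The key new ingredient is a cylindrical analog of Theorem~\ref{AT02}: I would classify all bounded nonnegative solutions of $\Delta_{f_\infty} v = v$ on $((S^k/\Gamma)\times \R^{n-k})\setminus\{0\}$. Expanding $v$ in spherical harmonics on $S^k/\Gamma$, each Fourier mode satisfies a drifted eigenvalue equation on $\R^{n-k}\setminus\{0\}$ to which the spherical-average technique of Lemmas~\ref{AL01}--\ref{AL03} can be applied; the outcome should be that $v$ is forced into a specific Green-type profile concentrated at the origin. Such a profile is incompatible with the normalization $\sup_{\partial B(p_i,1)} V_i = 1$ together with the positivity of $V_\infty$ on all of the regular set, producing the required contradiction. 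A companion statement in the spirit of Theorem~\ref{thm:MH26_2} would be needed: any orbifold Ricci shrinker with $|Rc|^2 = R^2/k$ at one point splits isometrically as a cylindrical quotient, by strong maximum principle applied to the elliptic equation satisfied by $W$.

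The main obstacle will be two-fold. First, without the $\mathcal{M}^*(A,H)$ hypotheses, one has neither the Haslhofer-M\"uller weak compactness nor the Munteanu-Wang curvature decay (Theorem~\ref{thm:MH20_4}), so the passage to a smooth limit on annuli must be recovered from pseudo-locality plus \emph{a priori} estimates valid only in the almost-cylindrical regime, possibly requiring new integral bounds for $W$. Second, the cylindrical rigidity is strictly harder than Theorem~\ref{AT02}: the cross-section $S^k/\Gamma$ contributes an infinite tower of nontrivial spherical-harmonic modes, each of which must be ruled out, and in the collapsing regime $|\Gamma_i|\to\infty$ the analysis must be carried out equivariantly on the universal cover before descending. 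Overcoming these two difficulties seems to be the heart of the argument.
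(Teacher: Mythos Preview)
The statement you are attempting to prove is Conjecture~\ref{cje:MH23_11}, which appears in Section~6 (``Further Questions'') of the paper as an \emph{open problem}. The paper provides no proof; it merely records the conjecture as a natural extension of Theorem~\ref{thm:1} in which the flat cone is replaced by a round cylinder. There is therefore nothing in the paper to compare your proposal against.

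As for the proposal itself, it is a reasonable heuristic outline but contains at least one concrete obstruction. Your auxiliary quantity $W=|Rc|^2-R^2/k$ is not nonnegative in general: Cauchy--Schwarz only gives $|Rc|^2\ge R^2/n$, and since $k<n$ is allowed the quantity $W$ can be strictly negative (for instance on any Einstein shrinker with $Rc=\lambda g$ one has $W=n\lambda^2(1-n/k)<0$). Thus your claim that $W>0$ on any non-cylindrical shrinker is false, the Harnack/maximum-principle machinery from Section~\ref{sec:annulus} does not transfer directly, and the proposed rigidity statement ``$W=0$ at one point forces a cylindrical splitting'' has no obvious mechanism. A workable approach would need a different test function---one that is genuinely nonnegative and vanishes exactly on cylinders---together with the correct drifted elliptic inequality; identifying such a function is part of why the conjecture remains open. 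Your own final paragraph already flags the two structural difficulties (absence of the $\mathcal{M}^*(A,H)$ hypotheses, and the infinite spherical-harmonic tower on $S^k/\Gamma$), and these are indeed serious: without entropy and Harnack bounds one loses both the Haslhofer--M\"uller compactness and the Munteanu--Wang control used throughout Section~\ref{sec:annulus}.
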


Motivated by Theorem~\ref{thm:2},  we make the following conjecture.  

\begin{conj}
 For each positive integer $n \geq 4$,   there exists a uniform constant $\epsilon=\epsilon(n)$ such that 
 for each non-flat Ricci shrinker $(M,p, g,f)$ we have
 \begin{align*}
    \sup_{x \in M} R(x) \geq \epsilon. 
 \end{align*} 
\label{cje:MH25_1}
\end{conj}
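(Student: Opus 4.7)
The plan is to attack Conjecture~\ref{cje:MH25_1} in dimension four by contradiction, reducing it to Theorem~\ref{thm:2}(a) and the gap Theorem~\ref{thm:1}. Suppose there were a sequence of non-flat Ricci shrinkers $(M_i, g_i, f_i)$ with $\epsilon_i := \sup_{M_i} R_i \to 0$. By Lemma~\ref{L100}, each $f_i$ attains its infimum at some point $p_i$, and since $\nabla f_i(p_i)=0$ the normalization \eqref{E101} forces
\begin{align*}
    f_i(p_i) \;=\; R_i(p_i) \;\leq\; \epsilon_i \;\longrightarrow\; 0.
\end{align*}
If one could show that $(M_i, p_i, g_i, f_i) \in \mathcal{M}^*(A, H)$ uniformly in $i$ for some fixed $A$ and $H$, then Theorem~\ref{thm:2}(a) would yield $f_i(p_i) \geq C_a > 0$, a direct contradiction. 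The problem thus reduces to verifying the three defining conditions of $\mathcal{M}^*(A, H)$ uniformly along the sequence.

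Boundedness of the scalar curvature is automatic from $R_i \leq \epsilon_i$. For the uniform Harnack inequality $R_i(x) \leq H R_i(y)$ on $B(p_i, 1)$, I would view $R_i$ as a positive supersolution of a drifted linear equation: \eqref{E215} gives $\Delta_{f_i} R_i = R_i - 2|Rc_i|^2 \leq R_i$. Combined with local curvature control in dimension four extracted from Munteanu-Wang's pointwise estimate (Theorem~\ref{thm:MH20_4}) and the uniform bound on the drift $|\nabla f_i|$ near $p_i$ obtained from \eqref{E101} and the upper bound of $f_i$ in Lemma~\ref{L100}, a Moser iteration for $\Delta_{f_i}-1$ on $B(p_i, 2)$ should produce a constant $H$ depending only on $n$.

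The principal obstacle is the uniform entropy bound $\mu(g_i, 1) \geq -A$. Without it, the sequence might collapse in the Gromov-Hausdorff sense and the compactness theorem of Haslhofer-M\"uller (Theorem~\ref{T101}) would not apply. The heuristic is that as $R_i \to 0$ the soliton identity degenerates towards the flat-cone identity $|\nabla f|^2 = f$, so the geometry should globally approximate a flat cone; quantitatively, one expects volume non-collapsing $|B(p_i, 1)| \geq c(n) > 0$ via a Bishop-type comparison against the weighted measure $e^{-f_i} dv$, and hence a uniform lower bound on $\mu_i = \log \int (4\pi)^{-n/2} e^{-f_i} dv$. Making this step rigorous without any a priori bound seems to require a delicate analysis of the auxiliary functions from Definition~\ref{dfn:MH23_2} outside the almost-flat-cone regime handled in Section~\ref{sec:annulus}.

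Once all three hypotheses are secured, compactness produces a pointed $\hat{C}^\infty$-Cheeger-Gromov subsequential limit $(M_\infty, p_\infty, g_\infty, f_\infty)$ that is an orbifold Ricci shrinker with $R_\infty \equiv 0$ on its smooth part; the scalar rigidity Theorem~\ref{thm:MH26_2} then identifies this limit with a flat cone $\R^4/\Gamma$, which contradicts the gap theorem (Theorem~\ref{thm:1}) and completes the argument. The hardest step, as indicated, will be the quantitative non-collapsing; for the higher-dimensional cases $n \geq 5$ an additional substantial hurdle is the absence of any analogue of Theorem~\ref{T101}, so genuinely new compactness results for Ricci shrinkers would be needed to run the same contradiction scheme.
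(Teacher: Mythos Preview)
The statement you address is listed in the paper as a \emph{conjecture} (Section~6, Further Questions); the paper supplies no proof of it, so there is nothing to compare your proposal against. Your write-up is an attack on an open problem, and you yourself flag the decisive gap.

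That gap is real and, as you say, is the entropy lower bound $\mu(g_i,1)\ge -A$. Every tool you invoke downstream---Theorem~\ref{T101}, Theorem~\ref{thm:1}, Theorem~\ref{thm:2}(a)---presupposes membership in $\mathcal M^*(A,H)$, hence a uniform entropy bound; without it the Haslhofer--M\"uller compactness is unavailable and the contradiction scheme does not close. Your heuristic that $R_i\to 0$ forces almost-flat-cone geometry and hence non-collapsing is essentially what one wants to prove, so it cannot be used as input. There is also a secondary gap in your Harnack step: you appeal to Theorem~\ref{thm:MH20_4} for curvature control, but the constant $L$ in \eqref{eqn:MH20_4} depends on the particular manifold $M$, not uniformly on the class, so the zeroth-order coefficient $1-2U_i$ in $\Delta_{f_i}R_i=(1-2U_i)R_i$ is not known to be uniformly bounded along the sequence, and Moser iteration will not produce a Harnack constant $H$ independent of~$i$.
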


Similar statements can be asked for $f$ and $Rf$, under the normalization condition (\ref{E101}). We leave these generalizations to interested readers. 
We close this article by returning to our initial question.

\begin{quest}
 Does there exist a sufficient and necessary condition for orbifold 4d Ricci shrinkers being able to be approximated by smooth Ricci shrinkers?
\label{qst:CA07_1} 
\end{quest}

\vskip10pt

Yu Li, Department of Mathematics, University of Wisconsin-Madison,
Madison, WI 53706, USA;  yli427@wisc.edu.\\

Bing  Wang, Department of Mathematics, University of Wisconsin-Madison,
Madison, WI 53706, USA;  bwang@math.wisc.edu.\\

\end{document}